\newtheorem{theorem}{Theorem}
\newtheorem{lemma}{Lemma}
\newtheorem{obs}{Observation}
\newtheorem{conj}{Conjecture}
\newtheorem{claim}{Claim}
\let\oldenumerate\enumerate
\renewcommand{\enumerate}{
	\oldenumerate
	\setlength{\itemsep}{1.5pt}
	\setlength{\parskip}{0pt}
	\setlength{\parsep}{0pt}
}
\numberwithin{equation}{section}
\newtcolorbox{mytextbox}[1][]{%
	sharp corners,
	enhanced,
	colback=white,
	height=10cm,
	attach title to upper,
	#1
}
\begin{document}
	\title{List recoloring of planar graphs}

\author{L. Sunil Chandran$^{\rm a}$, Uttam K. Gupta$^{\rm b}$, Dinabandhu Pradhan$^{\rm b,}$\thanks{Corresponding author.}  \\ \\
		$^{\rm a}$Department of Computer Science and Automation\\Indian Institute of Science, Bangalore, India\\
		\\		
		$^{\rm b}$Department of Mathematics \& Computing\\ Indian Institute of Technology (ISM), Dhanbad\\	
		\\
		\small \tt Email: sunil@iisc.ac.in; ukumargpt@gmail.com; dina@iitism.ac.in}
	
	\date{}
	\maketitle

\begin{abstract}

A list assignment $L$ of a graph $G$ is a function that assigns to every vertex $v$ of $G$ a set $L(v)$ of colors. A proper coloring $\alpha$ of $G$ is called an $L$-coloring of $G$ if $\alpha(v)\in L(v)$ for every $v\in V(G)$. For a list assignment $L$ of $G$, the $L$-recoloring graph $\mathcal{G}(G,L)$ of $G$ is a graph whose vertices correspond to the $L$-colorings of $G$ and two vertices of $\mathcal{G}(G,L)$ are adjacent if their corresponding $L$-colorings differ at exactly one vertex of $G$. A $d$-face in a plane graph is a face of length $d$. Dvo\v{r}\'ak and Feghali conjectured for a planar graph $G$ and a list assignment $L$ of $G$, that: (i) If $|L(v)|\geq 10$ for every $v\in V(G)$, then the diameter of $\mathcal{G}(G,L)$ is $O(|V(G)|)$. (ii) If $G$ is triangle-free and $|L(v)|\geq 7$ for every $v\in V(G)$, then the diameter of $\mathcal{G}(G,L)$ is $O(|V(G)|)$. In a recent paper, Cranston (European J. Combin. (2022)) has proved (ii). In this paper, we prove the following results. Let $G$ be a plane graph and $L$ be a list assignment of $G$. 
\begin{itemize}
\item If for every $3$-face of $G$, there are at most two $3$-faces adjacent to it and $|L(v)|\geq 10$ for every $v\in V(G)$, then the diameter of $\mathcal{G}(G,L)$ is at most $190|V(G)|$.
\item If for every $3$-face of $G$, there is at most one $3$-face adjacent to it and $|L(v)|\geq 9$ for every $v\in V(G)$, then the diameter of $\mathcal{G}(G,L)$ is at most $13|V(G)|$.
\item If the faces adjacent to any $3$-face have length at least $6$ and $|L(v)|\geq 7$ for every $v\in V(G)$, then the diameter of $\mathcal{G}(G,L)$ is at most $242|V(G)|$. This result strengthens the Cranston's result on (ii).
\end{itemize}

  As an additional result, we show that if the independence number of a $k$-colorable graph $G$ is at most $p$ and $L$ is a list assignment of $G$ such that $L(v)=\{1,2,\ldots,\ell\}$ for every $v\in V(G)$, where $\ell\geq \big\lfloor \frac{p\cdot k}{2}\big\rfloor+1$, then the diameter of $\mathcal{G}(G,\ell)$ is at most $4|V(G)|$. We further show that if $\ell<\big\lfloor \frac{p\cdot k}{2}\big\rfloor+1$, then $\mathcal{G}(G,L)$ can be a disconnected graph. 
\end{abstract}
	\noindent
	{\small \textbf{Keywords:} Recoloring graphs, Recoloring diameter, Discharging method, Planar graphs}
	
\section{Introduction}

All the graphs considered in this paper are finite, simple, and undirected. For a graph $G$, we use $V(G)$ and $E(G)$ to denote the vertex set and the edge set of $G$, respectively. 
Consider a problem with several feasible solutions. The \emph{reconfiguration problems} deal with transforming one feasible solution of the considered problem to another by taking small steps. Each step of transformation is taken over the feasible solutions. The reconfiguration problems have applications in statistical physics and optimization (see \cite{mohar,nishimura,heuvel}). 

When the reference problem is graph coloring, the reconfiguration problems are generally known as \emph{recoloring problems}. A \emph{(proper) $k$-coloring} of a graph $G$ is an assignment of colors from the set $\{1,2,\ldots,k\}$ of colors to the vertices of $G$ such that the adjacent vertices do not receive the same color. A graph $G$ is \emph{$k$-colorable} if there exists a $k$-coloring of $G$. For a $k$-colorable graph $G$, the \emph{$k$-recoloring graph} $\mathcal{G}(G,k)$ is a graph whose vertices correspond to the $k$-colorings of $G$ and two vertices of $\mathcal{G}(G,k)$ are adjacent if their corresponding $k$-colorings of $G$ differ at exactly one vertex of $G$. We use diam$(G)$ to denote the diameter of a graph $G$. The \emph{$k$-recoloring diameter} of a $k$-colorable graph $G$ is diam$(\mathcal{G}(G,k))$. We simply write recoloring diameter instead of $k$-recoloring diameter if the context of the $k$-recoloring graph is clear.
		
Two immediate questions about reconfiguration problems are the following: (Q1) Can we always reach from one feasible solution of the given problem to another? (Q2) What is the minimum number of steps required if the answer to Q1 is affirmative? These questions are important, for example in random sampling: an affirmative answer to Q1 ensures the ergodicity of the Markov chain. The answer to Q2 provides a lower bound on the mixing time of the underlying Markov chain (see~\cite{chen,frieze}). In the recoloring problems, Q1 and Q2 are equivalent to the questions whether (i) the concerned recoloring graph is connected,  and (ii) if it is connected, then what is the best upper bound on the recoloring diameter. For every $k\geq 2$, there exists a $k$-colorable graph $G$ (for example, consider the complete graph $K_k$ on $k$ vertices) with two $k$-colorings $c_1$ and $c_2$ such that there is no path between the two vertices corresponding to $c_1$ and $c_2$ in $\mathcal{G}(G,k)$. In case of the complete graph $K_k$, it can be seen that $\mathcal{G}(K_k,k)$ consists of only isolated vertices. On the other hand, for every $k\geq 4$, there exists some $k$-colorable graph $G$ such that $\mathcal{G}(G,k)$ may be disconnected, but some of the connected components of $\mathcal{G}(G,k)$ have diameter superpolynomial in $|V(G)|$~\cite{bonsma}. In view of these observations, the study of upper bounds on recoloring diameter of graphs in the most general setting does not make sense.

A graph $G$ is $d$-degenerate if for every subgraph $H$ of $G$, there exists a vertex $v\in V(H)$ such that $d_H(v)\leq d$, where $d_H(v)$ is the degree of $v$ in $H$. If we restrict ourselves to the class of $d$-degenerate graphs, some upper bounds on the $k$-recoloring diameter can be obtained usually for larger values of $k$.  Cereceda et al.~\cite{cerecedadisc} and Dyer et al.~\cite{dyer} independently proved that if $G$ is a $d$-degenerate graph, then $\mathcal{G}(G,k)$ is connected for every $k\geq d+2$. The bound $k\geq d+2$ is best possible since for $k=d+1$, the complete graph $K_{d+1}$ is $d$-degenerate and $\mathcal{G}(K_{d+1},d+1)$ consists of only isolated vertices. Cereceda et al.~\cite{cerecedadisc} proved an exponential upper bound on diam$(\mathcal{G}(G,k))$ in terms of $|V(G)|$; however, Cereceda~\cite{cerecedaconjec} conjectured the following.

\begin{conj}[Cereceda's conjecture]\label{cerecedaconjec}
 Let $G$ be a $d$-degenerate graph and $k\geq d+2$. Then diam$(\mathcal{G}(G,k))$ is $O(|V(G)|^2)$.
\end{conj}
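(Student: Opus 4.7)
The plan is to argue by strong induction on $n = |V(G)|$, exploiting the $d$-degenerate ordering. The base case $n \le 1$ is trivial. For the inductive step, let $v$ be a vertex with $d_G(v) \le d$ (guaranteed by $d$-degeneracy), and set $G' = G - v$; then $G'$ is still $d$-degenerate, so by induction there is a constant $c$ with diam$(\mathcal{G}(G', k)) \le c(n-1)^2$.

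The natural first attempt is to restrict $k$-colorings $\alpha, \beta$ of $G$ to $\alpha', \beta'$ of $G'$, invoke induction to obtain a recoloring sequence $\alpha' = c_0, c_1, \ldots, c_T = \beta'$ in $\mathcal{G}(G', k)$ with $T \le c(n-1)^2$, and then lift this sequence to $\mathcal{G}(G, k)$ by interleaving recolorings of $v$. Concretely, before a step that would change a neighbor $w_i \in N(v)$ to $v$'s current color, I would first move $v$ to a ``parking color'' in $\{1, \ldots, k\}$ avoiding $v$'s current color, the $\le d$ colors currently on $N(v)$, and the color $c_{i+1}(w_i)$ about to be assigned to $w_i$. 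A careful count shows these constraints forbid at most $d+1$ distinct colors (the overlap is because $c_{i+1}(w_i)$ equals $v$'s current color in the only case we need to park), so a valid park exists whenever $k \ge d+2$. One final step then restores $v$ to $\beta(v)$.

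This local lift, however, doubles the sequence length per level of the recursion, yielding a recurrence $D(n) \le 2 D(n-1) + O(1)$ and hence only an exponential bound---exactly what Cereceda originally obtained. The main obstacle is therefore not local feasibility of the parking step (which the degeneracy handles), but converting this multiplicative blowup into an additive one, i.e.\ achieving a recurrence of the form $D(n) \le D(n-1) + O(n)$, which would immediately give $D(n) = O(n^2)$. I would explore three refinements: (i) \emph{batching} the $G'$-sequence into blocks during which $v$ keeps a single color, so that $v$ is recolored only $O(n)$ times in total rather than once per conflict; (ii) an \emph{amortized potential-function} argument that weights each vertex by how often its current color obstructs neighbors and charges parking against future decreases of this potential; (iii) a \emph{structural refinement} of the recursively produced $G'$-sequence so that it makes few ``visits'' to $N(v)$, exploiting freedom in how the inductive recoloring is constructed. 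Controlling the cumulative parking cost across all $n$ recursive layers is precisely where Cereceda's conjecture becomes hard, and I expect this global amortization---not the one-step lifting---to be the real heart of any proof.
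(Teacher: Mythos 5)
The statement you are asked to prove is labelled as a \emph{conjecture} in the paper, and the authors explicitly state that it remains open: the paper offers no proof of it, only a survey of partial results (an exponential bound from Cereceda et al., a polynomial bound $O(|V(G)|^{d+1})$ from Bousquet and Heinrich, the quadratic bound under the stronger hypothesis $k\geq \frac{3}{2}(d+1)$, and the case of chordal graphs). So there is no ``paper's own proof'' to compare against, and any complete blind proof would be a major new result.

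Your proposal, to your credit, does not claim to be such a result. The first two paragraphs correctly reconstruct the standard degeneracy argument: peeling off a vertex $v$ of degree at most $d$, lifting a recoloring sequence of $G-v$ by parking $v$ on one of the $k-(d+1)\geq 1$ available colors before each conflicting step. This is essentially the argument of Cereceda et al.\ and Dyer et al.\ establishing \emph{connectivity} of $\mathcal{G}(G,k)$ for $k\geq d+2$, and you correctly diagnose that the naive lift yields $D(n)\leq 2D(n-1)+O(1)$ and hence only an exponential diameter bound. The genuine gap is exactly where you locate it: none of your three proposed refinements (batching, amortized potentials, structural control of visits to $N(v)$) is carried out, and carrying any of them out in the required generality is precisely the open problem. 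Note that the sharper tool the paper actually uses in its theorems, Lemma~\ref{keylemma}, implements your ``batching'' idea quantitatively --- it bounds the number of recolorings of $v$ by roughly $t/(|L(v)|-d_{G^*}(v)-1)$ where $t$ counts recolorings of $N(v)$ --- but it only yields a \emph{linear} bound when the list size exceeds the degree by at least $2$ with room to spare (morally $k\geq 2d+2$, as in Lemma~\ref{linear}); at the threshold $k=d+2$ the denominator is $1$ and the recursion again blows up. So your proposal is an accurate map of the difficulty rather than a proof, and it should not be presented as establishing the statement.
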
 

Note that in the case of chordal graphs, Conjecture~\ref{cerecedaconjec} is equivalent to the statement that for every $k\geq \chi(G)+1$, diam($\mathcal{G}(G,k)$) is $O(|V(G)|^2)$ since degeneracy of a chordal graph $G$ is equal to $\chi(G)-1$. This statement is proved in~\cite{bonamychordal}. 

Though Conjecture~\ref{cerecedaconjec} is still open, researchers have already proved some weaker versions of the conjecture. Bousquet and Heinrich~\cite{bousquetpol} proved that if $G$ is a $d$-degenerate graph and $k\geq d+2$, then  diam$(\mathcal{G}(G,k))=O(|V(G)|^{d+1})$. They also proved that if $k\geq \frac{3}{2}(d+1)$, then diam$(\mathcal{G}(G,k))=O(|V(G)|^2)$. For every $k=d+2$, Cereceda et al.~\cite{cerecedaconjec} had proved that there exists a family $\mathcal{F}$ of $d$-degenerate graphs such that for every $G\in \mathcal{F}$, diam($\mathcal{G}(G, k)$) is $\Omega(|V(G)|^2)$. This shows the tightness of Conjecture~\ref{cerecedaconjec} for $k=d+2$. Bartier et al.~\cite{bartierarxiv} conjectured that if $G$ is a $d$-degenerate graph and $k\geq d+3$, then diam$(\mathcal{G}(G,k))=O(|V(G|)$. Bousquet and Perarnau~\cite{bousquet2d+2} proved the following result.

\begin{lemma}[\cite{bousquet2d+2}]\label{linear} 
If $G$ is a $d$-degenerate graph and $k\geq 2d+2$, then diam$(\mathcal{G}(G,k))\leq (d+1)|V(G)|$.
\end{lemma}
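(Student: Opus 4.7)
Fix $k$-colourings $\alpha, \beta$ of $G$ with $k \ge 2d+2$. My plan is to exhibit an explicit schedule transforming $\alpha$ into $\beta$ in at most $(d+1)|V(G)|$ single-vertex recolourings, via an amortisation on a degeneracy ordering. Choose an ordering $v_1, v_2, \ldots, v_n$ of $V(G)$ such that every $v_i$ has at most $d$ neighbours in $\{v_{i+1}, \ldots, v_n\}$ (its \emph{forward} neighbours), and process the vertices in reverse order $v_n, v_{n-1}, \ldots, v_1$. I would maintain the invariant that after iteration $i$ every vertex $v_l$ with $l \ge i$ is at its target colour $\beta(v_l)$. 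At iteration $i$, I want to recolour $v_i$ to $\beta(v_i)$; the forward neighbours of $v_i$ are already at $\beta$ and so by properness do not carry $\beta(v_i)$, so the only obstruction comes from \emph{backward} neighbours of $v_i$ (those $v_j \in N(v_i)$ with $j<i$, still at their initial or an earlier-assigned temporary colour) whose current colour happens to be $\beta(v_i)$. For each such bad backward neighbour $v_j$, I first recolour $v_j$ to a \emph{safe} colour distinct from $\beta(v_i)$ and from every current neighbour colour of $v_j$, and then recolour $v_i$ to $\beta(v_i)$.

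The count is amortised to each vertex: $v_j$ is recoloured as a bad backward neighbour at iteration $i$ only when $v_i$ is a forward neighbour of $v_j$, and $v_j$ has at most $d$ forward neighbours by the choice of ordering. Hence $v_j$ undergoes at most $d$ conflict-resolution recolourings, plus one final recolouring at iteration $j$ that finalises $v_j$ to $\beta(v_j)$, giving at most $d+1$ recolourings per vertex; summing over $V(G)$ yields the required bound $(d+1)|V(G)|$.

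\textbf{The main obstacle} is proving feasibility of every conflict-resolution step under the hypothesis $k \ge 2d+2$. The forbidden colours for $v_j$ are $\beta(v_i)$ together with the current colours of its neighbours; the forward neighbours of $v_j$ contribute at most $d$ such colours, but $v_j$ may have many backward neighbours, and a priori their colours could fill the whole palette. I would resolve this by imposing an additional structural invariant on the intermediate colourings --- for instance, forcing each unprocessed vertex to lie in a small prescribed palette (such as $\{\alpha(v), \beta(v)\}$ together with a canonical ``shelter'' colour depending only on the degeneracy position of $v$) --- so that the set of distinct colours appearing on the neighbourhood of $v_j$ never exceeds $2d+1$ at any moment. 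Designing this invariant and verifying it is preserved under the schedule (with the safe colour at each step chosen to preserve it) is the technical crux of the proof, and is precisely where the slack provided by $k \ge 2d+2$ is exhausted.
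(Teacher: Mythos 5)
There is a genuine gap, and it sits exactly at the point you flag as the ``technical crux.'' Your amortised count of $d+1$ recolourings per vertex is fine, but the feasibility of the conflict-resolution step cannot be rescued by an invariant of the kind you propose. In a $d$-degenerate graph only the \emph{forward} degree along the ordering is bounded by $d$; the backward degree of $v_j$ is unbounded, so $N(v_j)$ may contain arbitrarily many vertices. Any invariant that confines each unprocessed vertex $u$ to a small palette depending on $u$ (such as $\{\alpha(u),\beta(u)\}$ plus a shelter colour) still allows the \emph{union} of these palettes over the backward neighbours of $v_j$ to cover all $k$ colours, because the pairs $\{\alpha(u),\beta(u)\}$ vary with $u$. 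Concretely, already under the initial colouring $\alpha$ a vertex $v_j$ with $\deg(v_j)\geq k-1$ can see every colour other than its own on its neighbourhood, so at the very first conflict-resolution step there may be no safe colour at all; the hypothesis $k\geq 2d+2$ gives no protection here. So the single-pass schedule, as designed, can get stuck.

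For comparison: the paper does not prove this lemma (it cites Bousquet and Perarnau), but the standard argument is an induction on $|V(G)|$ that avoids your difficulty entirely and is essentially a one-line application of Lemma~\ref{keylemma}. Pick a vertex $v$ with $d_G(v)\leq d$ (this exists by degeneracy), obtain by induction a recolouring sequence of $G-v$ recolouring each vertex at most $d+1$ times, and extend it to $G$: the neighbours of $v$ are recoloured at most $t\leq d(d+1)$ times in total, and since $k-d_G(v)-1\geq (2d+2)-d-1=d+1$, Lemma~\ref{keylemma} lets you insert recolourings of $v$ at most $\big\lceil \frac{d(d+1)}{d+1}\big\rceil+1=d+1$ times. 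The crucial difference is that the vertex being ``fitted in'' at each stage is always one of degree at most $d$ \emph{in the whole current graph}, not merely of bounded forward degree, so the palette argument never has to control an unbounded neighbourhood. I would recommend abandoning the global schedule and recasting your argument in this inductive form.
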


One of the most celebrated class of graphs of constant degeneracy is the class of planar graphs: it is well known that the planar graphs are $5$-degenerate and triangle-free planar graphs are $3$-degenerate. Bousquet and Heinrich~\cite{bousquetpol} proved that if $G$ is a planar bipartite graph, then diam$(\mathcal{G}(G,5))=O(|V(G)|^2)$ confirming the Cereceda's conjecture for the class of planar bipartite graphs. Recently, Cranston and Mahmoud~\cite{cranston22} strengthened this result by proving that if $G$ is a planar graph with no $3$-cycles and no $5$-cycles, then diam$(\mathcal{G}(G,5))=O(|V(G)|^2)$.  The \emph{girth} of a graph $G$ is the length of a smallest cycle in $G$. Bartier et al.~\cite{bartierarxiv} proved that if $G$ is a planar graph of girth at least $6$, then diam$(\mathcal{G}(G,5))=O(|V(G)|)$, and if $G$ is a planar graph of girth at least $5$, then diam$(\mathcal{G}(G,4))<\infty$.

 \begin{table}[h]
 	\relsize{-1}{
 \begin{center}

\begin{tabular}{|>{\centering\arraybackslash}p{2.6cm}|>{\centering\arraybackslash}p{.8cm}|>{\centering\arraybackslash}p{2.1cm}|>{\centering\arraybackslash}p{2.1cm}|>{\centering\arraybackslash}p{.8cm}|>{\centering\arraybackslash}p{2.1cm}|>{\centering\arraybackslash}p{2.1cm}|>{\centering\arraybackslash}p{.8cm}|}
	
    \hline
    \textbf{Class/No. of Colors} & \textbf{4} & \textbf{5} & \textbf{6} & \textbf{7} & \textbf{8} & \textbf{9} & \textbf{10}   \\
    \hline 
    Planar graphs of girth at least~$6$ & $O(n^3)$ \cite{bousquetpol} & $O(n)$ \cite{bartierarxiv} & $O(n)$ \cite{bousquet2d+2} & $O(n)$ \cite{bousquet2d+2} & $O(n) $\cite{bousquet2d+2} & $O(n)$ \cite{bousquet2d+2} & $O(n)$ \cite{bousquet2d+2}\\  
    \hline
    Planar graphs of girth at least~$5$ & $<\infty$ & $O(n(\log(n))^2)$ \cite{mad} & $O(n(\log(n))^2)$ \cite{mad} & $O(n)$ \cite{dvorak} & $O(n)$ \cite{bousquet2d+2} & $O(n)$ \cite{bousquet2d+2} & $O(n)$ \cite{bousquet2d+2}\\ 
    \hline
    Planar bipartite graphs & $\infty$ & $O(n^2)$ \cite{bousquetpol} & $O(n(\log(n))^3)$ \cite{mad} & $O(n)$ \cite{dvorak} & $O(n)$ \cite{bousquet2d+2} & $O(n)$ \cite{bousquet2d+2} & $O(n)$ \cite{bousquet2d+2}\\
    \hline
    Planar graph with no 3-cycles and no 5-cycles & $\infty$ & $O(n^2)$ \cite{cranston22} & $O(n(\log(n))^3)$ \cite{mad} & $O(n)$ \cite{dvorak} & $O(n)$ \cite{bousquet2d+2} & $O(n)$ \cite{bousquet2d+2} & $O(n)$ \cite{bousquet2d+2} \\
    \hline
    Triangle-free planar graph  & $\infty$ & $O(n^4)$ \cite{bousquetpol} & $O(n(\log(n))^3)$ \cite{mad} & $O(n)$ \cite{dvorak} & $O(n)$ \cite{bousquet2d+2} & $O(n)$ \cite{bousquet2d+2} & $O(n)$ \cite{bousquet2d+2} \\
    \hline
     Planar graph & $\infty$ & $\infty$ & $\infty$ & $O(n^6)$ \cite{bousquetpol} & $O(n(\log(n))^5)$ \cite{mad} & $O(n(\log(n))^5)$ \cite{mad} & $O(n)$ \cite{dvorak}  \\
    \hline

\end{tabular}
\caption{Best known upper bounds on the recoloring diameters of the graphs on $n$ vertices in the corresponding classes.}
\label{maintable}
 \end{center}}
 \end{table}

Since every planar graph $G$ is $5$-degenerate, by Lemma~\ref{linear}, diam$(\mathcal{G}(G,k))\leq 6|V(G)|$ if $k\geq 12$. Again since every triangle-free planar graph $G$ is $3$-degenerate, by Lemma~\ref{linear}, diam$(\mathcal{G}(G,k))\leq 4|V(G)|$ if $k\geq 8$. Dvo\v{r}\'{a}k and Feghali~\cite{dvorak} improved these bounds by showing that for a planar graph $G$, diam$(\mathcal{G}(G,10))\leq 8|V(G)|$, and for a triangle-free planar graph $G$, diam$(\mathcal{G}(G,7))\leq 7|V(G)|$. We refer to Table~\ref{maintable} for the known upper bounds on the recoloring diameter of the graphs in some well known classes of graphs to the best of our knowledge. Dvo\v{r}\'{a}k and Feghali~\cite{dvorak} conjectured that their results on planar graphs and triangle-free planar graphs can be generalized to list coloring. 

\noindent\textbf{List version:}
 A \emph{list assignment} $L$ of a graph $G$ is a function that assigns to every vertex of $G$ a set $L(v)$ of colors. A proper coloring $\alpha$ is called an \emph{$L$-coloring} of $G$ if $\alpha(v)\in L(v)$ for every $v\in V(G)$. For a list assignment $L$ of $G$, the \emph{$L$-recoloring graph} $\mathcal{G}(G,L)$ of $G$ is a graph whose vertices correspond to the $L$-colorings of $G$ and two vertices of $\mathcal{G}(G,L)$ are adjacent if their corresponding $L$-colorings differ at exactly one vertex of~$G$. 

\begin{conj}[\cite{dvorak}]\label{mainconj}
Let $G$ be a planar graph and $L$ be a list assignment of $G$.
\begin{enumerate}
\item If $|L(v)|\geq 10$ for every $v\in V(G)$, then  diam$(\mathcal{G}(G,L))=O(|V(G)|)$.
\item If $G$ is triangle-free and $|L(v)|\geq 7$ for every $v\in V(G)$, then  diam$(\mathcal{G}(G,L))=O(|V(G)|)$.
\end{enumerate} 
\end{conj}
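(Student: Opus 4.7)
\noindent\textbf{Proof plan for Conjecture~\ref{mainconj}.}
The plan is to fix two arbitrary $L$-colorings $\alpha$ and $\beta$ of $G$ and to exhibit a walk of length $O(|V(G)|)$ between the corresponding vertices of $\mathcal{G}(G,L)$. Following the template of Dvo\v{r}\'ak and Feghali~\cite{dvorak}, I would proceed by strong induction on $|V(G)|$. For the inductive step, the task is to locate a constant-size set $S \subseteq V(G)$ together with a sequence of $O(|S|)$ recoloring moves that transforms $\alpha$ into a coloring $\alpha'$ that agrees with $\beta$ on $S$ and with $\alpha$ outside the closed neighborhood of $S$. One then applies the induction hypothesis to $G - S$ with the restricted list assignment $L'(v) = L(v) \setminus \{\beta(u) : u \in S \cap N(v)\}$. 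For the induction to close, either $L'$ must still satisfy the conjecture's hypothesis, or, more realistically, the statement must be strengthened so that lists may be larger than the threshold at boundary vertices that lose colors when their neighbors in $S$ are fixed.

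The location of $S$ should be extracted from a discharging argument on a plane embedding of $G$. With initial charges $\mu(v) = d(v) - 4$ on vertices and $\mu(f) = |f| - 4$ on faces, Euler's formula gives total charge $-8$, and one designs rules that push positive charge from large faces and high-degree vertices toward small faces and small-degree vertices. The resulting unavoidable set of local configurations has to be chosen so that each configuration is \emph{list-reducible in a reconfiguration sense}: after committing to $\beta$-values on a bounded subset $S$ of the configuration, the coloring on the boundary of $S$ can be patched with $O(1)$ further single-vertex moves. In part~(i) the slack between planar $5$-degeneracy and $|L(v)| \geq 10$ leaves roughly five spare colors at every vertex, which is what permits the patching; in part~(ii), triangle-freeness forces $3$-degeneracy, so $|L(v)| \geq 7$ leaves four spare colors, matching the structure underlying Cranston's proof of part~(ii).

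The principal obstacle, and precisely the reason the present paper has to impose extra sparsity conditions on $3$-faces, lies in part~(i) when $G$ contains dense clusters of triangles such as stacked triangulations or Apollonian-type near-triangulations. There a degree-$5$ vertex $v$ can be surrounded entirely by $3$-faces whose other endpoints themselves belong to many triangles, so every color in $L(v)$ may be blocked by $\beta$-values of neighbors that have not yet been updated, and the naive inductive step stalls. A full proof of part~(i) will almost certainly require a genuinely non-local step inside such a cluster, recoloring many vertices simultaneously via Kempe-chain swaps and amortizing the cost over the cluster size so that the total number of moves remains linear in $|V(G)|$. The discharging scheme must then be redesigned to certify the existence of such cluster-level reducible configurations; extending the rules developed in this paper to handle $3$-faces with three or more adjacent $3$-faces appears to be the central technical hurdle.
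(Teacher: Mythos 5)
The statement you are asked to prove is labelled as a conjecture in the paper, and the paper does not prove it: part~(a) remains open, and part~(b) was resolved by Cranston in~\cite{cranstonEJC}. What the paper actually establishes are weaker results (Theorems~\ref{two3faceswithone3face}--\ref{trianglewith6+faces}) for subclasses $\mathcal{G}_1,\mathcal{G}_2,\mathcal{G}_3$ of planar graphs in which the triangles are sparsely arranged. Your submission is consistent with this state of affairs, but it is a proof \emph{plan}, not a proof. You yourself identify the decisive obstruction for part~(a) --- a $5$-vertex surrounded by $3$-faces inside a dense triangle cluster, where no reducible configuration with the required color slack is available --- and you leave it unresolved, proposing only that "a genuinely non-local step" and "Kempe-chain swaps" will "almost certainly" be required. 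That is precisely the gap; nothing in the proposal closes it, so the conjecture is not proved. Part~(b) is likewise only gestured at rather than argued.

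One substantive comparison is still worth making. Your inductive mechanism is the Thomassen-style one: recolor $\alpha$ so that it agrees with $\beta$ on a bounded set $S$, then recurse on $G-S$ with the shrunken lists $L'(v)=L(v)\setminus\{\beta(u):u\in S\cap N(v)\}$, which forces you to strengthen the induction hypothesis to tolerate shorter lists at boundary vertices. The paper (following Cranston and Bousquet--Perarnau) inverts this: it recurses on $G-S$ with the \emph{unchanged} lists, obtains a recoloring sequence there, and then re-inserts the vertices of $S$ one at a time using Lemma~\ref{keylemma}, which bounds the number of times a re-inserted vertex $v$ must be recolored by $\big\lceil \frac{t}{|L(v)|-d(v)-1}\big\rceil+1$ in terms of the number $t$ of recolorings of its neighbors. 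This second route avoids any list-shrinking and yields explicit per-vertex recoloring counts (hence the concrete constants $190$, $13$, $242$); the discharging is then used only to certify that an unavoidable configuration with small enough degrees exists, exactly as you anticipate. If you intend to pursue part~(a), you should be aware that the amortized Kempe-chain idea you sketch for dense triangle clusters is not present in the paper and would constitute genuinely new machinery; the paper's contribution is precisely to show how far one can get without it.
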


Recently, Cranston~\cite{cranstonEJC} settled Conjecture~\ref{mainconj}(b) by proving that if $G$ is a triangle-free planar graph and $L$ is a list assignment of $G$ such that $|L(v)|\geq 7$ for every $v\in V(G)$, then diam$(\mathcal{G}(G,L))\leq 30(|V(G)|)$. Bartier et al.~\cite{bartierarxiv} proved a weakening of Conjecture~\ref{mainconj}(a) by showing that if $G$ is a planar graph and $L$ is a list assignment of $G$ such that $|L(v)|\geq 11$ for every $v\in V(G)$, then diam$(\mathcal{G}(G,L))\leq 100(|V(G)|)$. 

\subsection{Our results}

 A \emph{plane embedding} of a planar graph $G$ is an embedding of $G$ in a plane such that the edges of $G$ do not cross each other except at the endpoints. A planar graph with a plane embedding is called a \emph{plane} graph. A face $f$ of a plane graph $G$ is a $k$-face (respectively, $k^+$-face/$k^-$-face), if the length of $f$ is $k$ (respectively, at least $k$/ at most $k$). Let $\mathcal{G}_1=\{G\mid$ $G$ is a planar graph and $G$ has a plane embedding such that for every $3$-face of $G$, there are at most two $3$-faces adjacent to it$\}$. Note that for any plane graph $G$ and any $3$-face $f$ of $G$, there are at most three $3$-faces adjacent to $f$. In our first result, we prove a weakening of Conjecture~\ref{mainconj}(a) by showing that Conjecture~\ref{mainconj}(a) holds for the graphs in the class $\mathcal{G}_1$.

\begin{theorem}\label{two3faceswithone3face}
Let $G\in \mathcal{G}_1$ and $L$ be a list assignment of $G$ such that $|L(v)|\geq 10$ for every $v\in V(G)$. Then diam$(\mathcal{G}(G,L))\leq 190|V(G)|$.
\end{theorem}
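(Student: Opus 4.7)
My plan is to adapt the ``reducible configurations plus discharging'' framework used by Dvo\v{r}\'ak and Feghali \cite{dvorak} for planar graphs with $10$ colors, and further developed by Cranston \cite{cranstonEJC} in the triangle-free setting, to the class $\mathcal{G}_1$. I would argue by strong induction on $|V(G)|$. Given two $L$-colorings $\alpha,\beta$ of $G$, I aim to identify a bounded-size ``reducible'' subgraph $H\subseteq G$ such that (a) $G-V(H)$ still lies in $\mathcal{G}_1$ and satisfies the list-size hypothesis when the lists are restricted, so the induction hypothesis transforms $\alpha|_{G-V(H)}$ into $\beta|_{G-V(H)}$; and (b) the coloring of $V(H)$ can be maintained during this process and finally aligned with $\beta|_{V(H)}$ using at most $c\cdot|V(H)|$ additional recoloring steps for an absolute constant $c$. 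Summing the recurrence gives the desired linear bound; the explicit constant $190$ is essentially the worst ratio of extra steps per removed vertex, taken over the entire catalogue of reducible configurations.

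\medskip

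For the reducibility step I would assemble a catalogue graded by the slack that $|L(v)|\ge 10$ provides. The baseline case is any vertex $v$ with $d(v)\le 4$: at any intermediate step at most $d(v)\le 4$ colors are blocked by current neighbors plus at most $4$ ``transient'' colors appearing while a neighbor is being recolored, so $v$ always has a safe color in a list of size $10$, and only $O(1)$ extra recolorings at $v$ are needed per induction step. More delicate configurations cover $5$-vertices incident to a $3$-face (whose local geometry is constrained by $\mathcal{G}_1$), short chains of adjacent $5$-vertices, and vertices on a $3$-face whose third vertex has bounded degree. For every such $H$ I would prove a \emph{forbidden-color bound}: whenever the induction fires one recoloring in $G-V(H)$, the set of colors that must be avoided by the combined coloring of $V(H)$ has size at most $9$, so $V(H)$ can be kept properly colored at the cost of at most $|V(H)|$ internal recolorings per external step.

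\medskip

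For the structural half of the proof I would use the classical Euler identity $\sum_{v}(d(v)-4)+\sum_{f}(\ell(f)-4)=-8$, with initial charges $\mu(v)=d(v)-4$ and $\mu(f)=\ell(f)-4$. I then design local discharging rules that transfer charge from $5^+$-vertices and $5^+$-faces toward $3$-faces and $\le 4$-vertices. The central use of the $\mathcal{G}_1$ hypothesis is that every $3$-face $f$ has at least one edge $e$ whose other incident face has length $\ge 4$; chasing through $e$, $f$ is guaranteed a source of positive charge. A case analysis on the local environment of each vertex and each face then shows that, if none of the catalogued reducible configurations appears in $G$, the final charge of every vertex and every face is non-negative, contradicting the negative total and completing the inductive step.

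\medskip

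The main obstacle, I expect, is the reducibility analysis rather than the discharging itself. For each configuration $H$, one must track precisely how the coloring of $V(H)$ evolves as the induction step-by-step recolors $G-V(H)$: the boundary of $H$ can keep demanding the current colors of $V(H)$, and one must exhibit an explicit schedule of at most $O(|V(H)|)$ internal recolorings that restores a valid state. The interaction with $\mathcal{G}_1$ is that without the triangle-adjacency restriction, arbitrarily long ``triangle strips'' could force configurations with too few available colors, breaking the forbidden-color bound; $\mathcal{G}_1$ is exactly the amount of control needed to trim the catalogue to finitely many unavoidable configurations, each with a provably bounded recoloring cost, whose worst value is the $190$ in the statement.
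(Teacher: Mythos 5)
Your plan follows the same broad template as the paper (induction on $|V(G)|$, a low-degree vertex or a small reducible configuration extracted via balanced charging, then extension of the recoloring sequence), but as written it has genuine gaps that are exactly where the work lies. First, you never identify the unavoidable configuration. The paper proves (its Theorem~\ref{strtwo3faceswithone3face}) that if $G\in\mathcal{G}_1$ and $\delta(G)\geq 5$, then $G$ contains a \emph{diamond} (two $3$-faces sharing an edge) with three vertices of degree $5$ and one of degree at most $6$; your catalogue (``$5$-vertices incident to a $3$-face, short chains of adjacent $5$-vertices, \ldots'') does not pin down any concrete configuration, and without one the discharging has no target. Second, your discharging sketch sends charge ``toward $3$-faces and $\le 4$-vertices,'' but in the regime where discharging is actually applied all $4^-$-vertices have already been removed as reducible, so $\delta(G)\geq 5$ and the needy objects are the $3$-faces and the $5$-vertices incident to four or five $3$-faces. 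The genuinely hard case is bounding the charge lost by $7^+$-vertices, which in the paper requires the notion of a \emph{special diamond} and the inductive bound $w_G(v)\leq 3(d_G(v)-4)$ (Lemma~\ref{totaltrianglemidedge}); nothing in your plan anticipates this, and the naive rule ``every vertex pays $1/3$ to each incident $3$-face'' does not by itself leave $5$-vertices with nonnegative charge when five $3$-faces surround them.

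Third, the reducibility half is only asserted. The mechanism that actually closes the induction is Lemma~\ref{keylemma}: a vertex $v$ with $|L(v)|\geq d(v)+2$ is recolored at most $\bigl\lceil t/(|L(v)|-d(v)-1)\bigr\rceil+1$ times, where $t$ counts recolorings of its neighbors. For the constant to be uniform one needs $|L(v)|\geq 2d_{G^*}(v)+2$ at each peeling step, which forces a specific order of reinsertion of the diamond's vertices (the $6^-$-vertex first, then the remaining vertices so that each has degree at most $3$ or the full degree $5$ only at the last two steps) and an explicit arithmetic check that the worst vertex is recolored at most $190$ times (the paper's final step gives exactly $\bigl\lceil (2\cdot 190+153+127+96)/4\bigr\rceil+1=190$). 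Your ``forbidden-color bound'' of $9$ colors and ``$O(1)$ internal recolorings per external step'' gloss over this: a vertex of degree $5$ with a list of size $10$ is \emph{not} recolored $O(1)$ times in total, and whether the per-vertex ratio closes depends delicately on the peeling order and the degrees inside the configuration. So while the architecture is right, the proposal omits the structural theorem, the special-diamond bookkeeping, and the extension arithmetic that together constitute the proof.
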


Let $\mathcal{G}_2=\{G\mid G$ is a planar graph and $G$ has a plane embedding such that for every $3$-face of $G$, there is at most one $3$-face adjacent to it$\}$. Clearly, $\mathcal{G}_2$ is a proper subclass of $\mathcal{G}_1$. In our second result, we prove that list size $9$ is sufficient for the graphs in the class $\mathcal{G}_2$ to ensure a linear upper bound on the diameter of the recoloring graphs. 

 \begin{theorem}\label{one3facewithone3face}
Let $G\in \mathcal{G}_2$ and $L$ be a list assignment of $G$ such that $|L(v)|\geq 9$ for every $v\in V(G)$. Then diam$(\mathcal{G}(G,L))\leq 13|V(G)|$.
\end{theorem}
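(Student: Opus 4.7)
The plan is to induct on $|V(G)|$, identifying at each step a \emph{reducible vertex} $v$ whose removal preserves membership in $\mathcal{G}_2$. This preservation is automatic because vertex deletion merges only faces incident to $v$, so the $3$-faces of $G-v$ are exactly those $3$-faces of $G$ not containing $v$, with the same adjacencies. The structural lemma identifying $v$ would be proved by discharging. Assigning initial charges $\mu(v)=d(v)-4$ to vertices and $\mu(f)=\ell(f)-4$ to faces (total $-8$ by Euler's formula on a connected plane graph, with disconnected cases handled componentwise), I would try the rule that every $4^+$-face donates $\tfrac{1}{2}$ to each adjacent $3$-face. Because every $3$-face in $\mathcal{G}_2$ has at least two $4^+$-face neighbors, each $3$-face finishes with nonnegative charge; a secondary rule routing excess from high-degree vertices to short non-triangular faces clears the remaining deficits, leaving the unavoidable negative charge on a vertex $v$ for which the combined quantity $d(v)+t(v)$ — with $t(v)$ counting the $3$-faces incident to $v$ — is bounded by a small constant.

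Given such a $v$, the inductive step proceeds as follows. Let $\alpha,\beta$ be two $L$-colorings of $G$. Restrict to $G-v$ and invoke the inductive hypothesis to obtain a recoloring sequence $\gamma_0 = \alpha|_{G-v},\gamma_1,\ldots,\gamma_m = \beta|_{G-v}$ of length $m \leq 13(|V(G)|-1)$. Lift this sequence to $G$ by carrying along a color on $v$: whenever a step $\gamma_i\to\gamma_{i+1}$ would place a neighbor of $v$ at $v$'s current color, first recolor $v$ to a free color from $L(v)$. The existence of such a color follows from $|L(v)|\geq 9$ and the structural bound on $d(v)$, which together guarantee that after excluding the at-most-$d(v)$ colors currently used by $N(v)$ the list $L(v)$ retains several free entries. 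A final $O(1)$ burst of steps reconciles $v$'s color with $\beta(v)$, after which concatenating everything yields a recoloring sequence from $\alpha$ to $\beta$ of length at most $13|V(G)|$.

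The main obstacle will be getting the amortization clean enough to match the constant $13$. A naive lift that injects one extra step per inner recoloring cost up to twice the inner length, which gives only a $26|V(G)|$ bound; tightening this to $13$ will require either a sharper structural lemma — one identifying vertices so sparsely surrounded by $3$-faces that forced recolorings of $v$ are provably rare over a complete sequence — or a bundled reduction that removes a small cluster of vertices simultaneously and distributes the $13$ budget amongst them. Calibrating the discharging rules so that the configurations they produce are both unavoidable and cheap enough to absorb exactly into the factor $13$, while keeping the reducibility argument compatible with the list-coloring setting of $\mathcal{G}(G,L)$ (as opposed to the uniform setting of $\mathcal{G}(G,k)$), will be the technical heart of the proof.
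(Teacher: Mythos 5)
Your proposal has the right general shape (induction, discharging, extension of a recoloring sequence past a deleted vertex), but three of its load-bearing steps fail as stated, and they are exactly the points where the paper has to work hardest.

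First, the reduction target is wrong: a \emph{single} reducible vertex cannot meet the budget of $13$. The correct inductive invariant is per-vertex (``every vertex is recolored at most $13$ times''), not a bound on the total sequence length, and the extension must use the amortized bound of Lemma~\ref{keylemma}: $v$ is recolored at most $\big\lceil \frac{t}{|L(v)|-d(v)-1}\big\rceil+1$ times, where $t$ is the number of recolorings of $N(v)$. Already for a $4$-vertex this gives $\big\lceil \frac{4\cdot 13}{9-4-1}\big\rceil+1=14>13$, so no single vertex of degree $\geq 4$ is reducible within the budget. The paper's structural theorem (Theorem~\ref{strone3facewithone3face}) therefore produces \emph{multi-vertex} configurations --- an edge with both endpoints of degree $4$, a $(4,5,5)$-triangle, or a $(4,5,6)$-triangle whose $6$-vertex has an outside $4$-neighbor --- which are deleted as a cluster and re-inserted one at a time in an order that keeps each vertex's degree at insertion time low enough for the amortization to land exactly on $13$. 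You gesture at a ``bundled reduction'' as a fallback, but it is not a fallback: it is the proof. Second, your lifting step (``whenever a neighbor would take $v$'s color, first recolor $v$'') is the naive one-recoloring-per-conflict lift; it does not preserve the per-vertex invariant, and since the induction applies the lift at every level, the overhead compounds rather than costing a single factor of $2$. The whole point of Lemma~\ref{keylemma} is that a list of size $9$ lets $v$ jump to a color that survives many subsequent neighbor recolorings.

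Third, the discharging rule ``every $4^+$-face donates $\tfrac12$ to each adjacent $3$-face'' is dead on arrival: a $4$-face has initial charge $0$ and cannot donate anything, and a $5$-face (charge $1$) may be adjacent to more than two $3$-faces. The paper instead routes charge from \emph{vertices} to $3$-faces: every $5$-vertex sends $\tfrac13$ to each incident $3$-face, and $6^+$-vertices send $\tfrac13$, $\tfrac12$, or $\tfrac23$ to an incident $3$-face depending on the degrees of its other two vertices; the case analysis for $6$-vertices is what forces configuration $H_3$ into the unavoidable list. Without a concrete replacement for your face-based rule and a concrete list of unavoidable configurations, the structural lemma --- and hence the whole argument --- is missing.
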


 Let $\mathcal{G}_3=\{G\mid $ $G$ is a planar graph and $G$ has a plane embedding such that the faces adjacent to any $3$-face of $G$ are $6^+$-faces$\}$. Clearly, $\mathcal{G}_3$ is a proper subclass of $\mathcal{G}_2$. In our third result, we prove that list size $7$ is sufficient for the graphs in the class $\mathcal{G}_3$ to ensure a linear upper bound on the diameter of the recoloring graphs.

\begin{theorem}\label{trianglewith6+faces}
Let $G\in \mathcal{G}_3$ and $L$ be a list assignment of $G$ such that $|L(v)|\geq 7$ for every $v\in V(G)$. Then diam$(\mathcal{G}(G,L))\leq 242|V(G)|$.
\end{theorem}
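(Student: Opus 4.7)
The plan is to follow Cranston's discharging-and-induction framework for the triangle-free case with list size $7$ and adapt it to allow isolated $3$-faces. I would prove by strong induction on $|V(G)|$ that any two $L$-colorings $\alpha, \beta$ of $G$ are connected in $\mathcal{G}(G,L)$ by a path of length at most $242|V(G)|$. The argument splits into two parts: a \emph{structural lemma} that every $G \in \mathcal{G}_3$ contains some member of a finite catalog of \emph{reducible configurations}, and a \emph{reduction lemma} bounding the recoloring cost of each configuration by a uniform constant.

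For the structural lemma, I would set initial charge $d(v) - 4$ at every vertex $v$ and $\ell(f) - 4$ at every face $f$; Euler's formula then gives total charge $-8$. Membership in $\mathcal{G}_3$ forces every $3$-face to be bordered by three $6^+$-faces, so one can apply the discharging rule ``each $6^+$-face sends $\tfrac{1}{3}$ to each adjacent $3$-face.'' This raises every $3$-face's charge to $0$, and leaves every $6^+$-face of length $\ell$ with charge at least $\ell - 4 - \ell/3 = (2\ell - 12)/3 \ge 0$. Further rules would redistribute surplus charge sitting on high-degree vertices and on long faces into small-degree vertices; what remains of the negative charge must then be supported on a finite catalog of \emph{reducible} local configurations (for example a vertex of degree at most $4$, or a constant-size subgraph of small-degree vertices around a $3$-face).

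For the reduction lemma, given a reducible configuration $S \subseteq V(G)$, one verifies that (i) the deletion $G - S$, with the inherited embedding, still belongs to $\mathcal{G}_3$, (ii) from any $L$-coloring of $G$, a constant number of recoloring moves brings $S$ into a canonical state, (iii) the inductive hypothesis applied to $G - S$ (with lists restricted by the frozen boundary) yields a recoloring path on $V(G)\setminus S$, and (iv) a final constant number of moves recovers $\beta$ on $S$. The hypothesis $|L(v)|\ge 7$ guarantees that every $v \in S$ has at least $7 - |N(v)\setminus S|$ available colors, which is enough to perform the reversible swaps required for configurations in the catalog. Summing per-vertex costs then delivers the bound $242|V(G)|$.

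The principal obstacle is the tight case of a degree-$5$ vertex $v$ lying on several $3$-faces: with list size $7$, such a vertex has only $2$ free colors, which is too few for a standalone reversible recoloring. The fix is to calibrate the discharging so that every such ``tight'' vertex is packaged inside a larger reducible configuration containing a neighbor (of higher degree, or along a longer face) with enough slack to absorb the recoloring cost. Balancing these per-configuration budgets so that the overall coefficient stays below $242$ while ensuring that no discharging outcome is left uncovered is the combinatorial heart of the proof, and is where most of the case analysis will concentrate.
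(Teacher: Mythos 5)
Your outline follows the same broad strategy as the paper (balanced charging with $d(v)-4$ and $d(f)-4$, an unavoidable-configuration theorem, then induction with the extension lemma), but the two places where you defer the work are exactly where the proof lives, and the specific claims you do make there are wrong. First, the catalog of reducible configurations cannot contain ``a vertex of degree at most $4$'': with $|L(v)|\geq 7$ the extension lemma recolors a deleted vertex $v$ at most $\big\lceil \tfrac{t}{|L(v)|-d(v)-1}\big\rceil+1$ times, so a lone $3$-vertex already costs $\lceil 3\cdot 242/3\rceil+1=243>242$ and a lone $4$-vertex costs $485$; only $2^-$-vertices are directly reducible. This is why the paper's unavoidable configurations are an edge joining two $3$-vertices ($H_4$) and bunches of $4(p)3$- and $5(p)3$-paths of $4$-vertices ending in $3$-vertices ($H_5$, $H_6$), which can be peeled one vertex at a time so that each vertex has degree at most $3$ at the moment it is re-inserted. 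Correspondingly, your discharging sketch only explains how $3$-faces reach charge $0$; the genuinely hard part is getting charge to the $3$-vertices (each starting at $-1$), which the paper does via ``long distance discharging'' of $1/3$ from $5^+$-vertices along facial walks of $6^-$-faces through chains of $4$-vertices --- precisely the mechanism whose failure produces $H_5$ and $H_6$. Saying ``further rules redistribute surplus charge'' does not identify these configurations, and without them the reduction step has nothing to reduce.

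Second, your reduction scheme (iii) is not sound as stated: if you freeze $S$ and apply induction to $G-S$ \emph{with lists restricted by the frozen boundary}, a vertex of $G-S$ with several neighbours in $S$ can end up with fewer than $7$ colors, so the induction hypothesis no longer applies. The working order is the reverse one: delete $S$, apply induction to $G-S$ with the \emph{unrestricted} lists, and only then lift the recoloring sequence back to $G$ vertex by vertex via Lemma~\ref{keylemma}, paying $\big\lceil \tfrac{t}{|L(v)|-d(v)-1}\big\rceil+1$ recolorings of each re-inserted vertex, where $t$ counts the recolorings of its already-present neighbours. Finally, note that the paper in fact proves the theorem for a strictly larger class $\mathcal{G}$ (faces adjacent to a $3$-face need only be $5^+$-faces, with at most three $3$-faces adjacent to any $5$-face), so the statement for $\mathcal{G}_3$ is a corollary of a stronger structural result.
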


 Note that the class $\mathcal{G}_3$ is a superclass of the class of triangle-free planar graphs. So Theorem~\ref{trianglewith6+faces} generalizes the result given by Cranston~\cite{cranstonEJC} that confirms Conjecture~\ref{mainconj}(b). Cranston used the discharging method to obtain a useful structural result for triangle-free planar graphs. Then he used the structural result to obtain a linear upper bound on the recoloring diameter of triangle-free planar graphs. To prove Theorem~\ref{trianglewith6+faces}, we also use the discharging method to obtain a structural result for the graphs in the class $\mathcal{G}_3$ (see Theorem~\ref{strtrianglewith5+faces}). However, our discharging method differs from that of Cranston in the sense that we use the balanced charging method to assign charges to the vertices and the faces of the graph (see Subsection~\ref{balanced}) whereas Cranston used the vertex charging method. It turns out that the collection of unavoidable configurations obtained by Cranston is a proper subset of our collection of unavoidable configurations.
	
For a positive integer $p\geq 3$, we say that a planar graph $G$ is without $p$-cycles if $G$ does not contain any $p$-cycle as a subgraph. The following lemma which is useful to study the recoloring diameter of planar graphs without $p$-cycles follows from a result of Cranston~\cite{cranstonEJC}:

\begin{lemma}[\cite{cranstonEJC}]\label{cranstondiam}
Let $G$ be a $d$-degenerate graph and $L$ be a list assignment of $G$. If $|L(v)|\geq 2d+2$ for every $v\in V(G)$, then diam$(\mathcal{G}(G,L))\leq c\cdot|V(G)|$, where $c$ is an integer such that $\big\lceil \frac{dc}{d+1}\big\rceil +1\leq c.$
\end{lemma}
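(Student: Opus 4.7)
The plan is to prove by induction on $n = |V(G)|$ the strengthened statement: for any two $L$-colorings $\alpha, \beta$ of $G$, there is a path from $\alpha$ to $\beta$ in $\mathcal{G}(G,L)$ in which every vertex of $G$ is recolored at most $c$ times. This immediately implies diam$(\mathcal{G}(G,L)) \leq c|V(G)|$. The base case $n = 1$ is trivial. For the inductive step, $d$-degeneracy supplies a vertex $v$ with $\deg_G(v) \leq d$; setting $G' = G - v$ (still $d$-degenerate, with $|L(u)| \geq 2d+2$ for every $u \in V(G')$) and applying the inductive hypothesis to the colorings $\alpha|_{V(G')}$ and $\beta|_{V(G')}$ yields a recoloring path $\alpha|_{V(G')} = \gamma_0, \gamma_1, \ldots, \gamma_m = \beta|_{V(G')}$ in $\mathcal{G}(G', L|_{V(G')})$ of length $m \leq c(n-1)$ in which each vertex of $G'$ is recolored at most $c$ times.

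I lift this $G'$-path to a path in $\mathcal{G}(G,L)$ by choosing, at each time $t \in \{0,1,\ldots,m\}$, a color $\delta_t$ for $v$ in $F_t := L(v) \setminus \{\gamma_t(u) : u \in N_G(v)\}$, with $\delta_0 = \alpha(v)$ and $\delta_m = \beta(v)$ (both legal since $\alpha$ and $\beta$ are proper). The hypotheses $|L(v)| \geq 2d+2$ and $|N_G(v)| \leq d$ give $|F_t| \geq d+2$ throughout. Whenever $\delta_{t-1} \in F_t$ I set $\delta_t = \delta_{t-1}$ and no recoloring of $v$ is inserted at step $t$; otherwise a forced recoloring of $v$ is inserted. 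The total length of the lifted path is $m$ plus the number of times $v$ is recolored.

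The crux is a lookahead rule for picking replacement colors. Whenever $\delta$ is forced to change at time $t$, I choose the new $\delta_t \in F_t$ so that it differs from the new color of every one of the next $d$ neighbor-recoloring events; such a $\delta_t$ exists because those events introduce at most $d$ distinct colors while $|F_t| \geq d+2 > d$. This guarantees that consecutive forced $v$-recolorings are separated by at least $d+1$ neighbor-recoloring events. Since $|N_G(v)| \leq d$ and every neighbor of $v$ is recolored at most $c$ times along the $G'$-path (by the strengthened inductive hypothesis), the total number of neighbor-recoloring events is at most $dc$; hence the number of forced $v$-recolorings is at most $\lceil dc/(d+1)\rceil$. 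Allowing one further recoloring to bring $v$ to $\delta_m = \beta(v)$, the total number of $v$-recolorings is at most $\lceil dc/(d+1)\rceil + 1 \leq c$, which is exactly what the hypothesis on $c$ provides. Combined with the inductive $G'$-bound, this produces a $G$-path in which every vertex of $G$ is recolored at most $c$ times, closing the induction.

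The main obstacle is the bookkeeping in the amortized lookahead argument: verifying that $\delta_0 = \alpha(v)$ (which we do not get to choose) and the terminal requirement $\delta_m = \beta(v)$ can both be absorbed into the $\lceil dc/(d+1)\rceil + 1$ budget, and that the lookahead does not create hidden cascades. This is where the precise form of the inequality $\lceil dc/(d+1)\rceil + 1 \leq c$ is used tightly; the strength of the inductive statement (bounding per-vertex recolorings, not just the diameter) is what makes the neighbor-event budget $dc$ rather than an uncontrolled $m$, and thus what makes the whole argument work.
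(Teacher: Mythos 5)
Your proof is correct and matches the approach the paper intends: induction on $|V(G)|$ removing a vertex of degree at most $d$, combined with the extension step of Lemma~\ref{keylemma} (your inline lookahead argument, choosing a replacement color for $v$ that survives the next $d$ neighbor-recoloring events, is precisely the proof of that lemma, with $t\leq dc$ and $|L(v)|-d_G(v)-1\geq d+1$ giving $\lceil dc/(d+1)\rceil+1\leq c$ recolorings of $v$). The paper cites this result rather than proving it, but your argument is the standard one and is the same template the paper uses in its Theorems~\ref{two3faceswithone3face}--\ref{planarwithout4cycles}.
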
	
It is known that for any $p\in \{3,5,6\}$, the planar graph $G$ without $p$-cycles is $3$-degenerate (see \cite{fijavz,wang}). Therefore, Lemma~\ref{cranstondiam} is applicable if $|L(v)|\geq 8$ for every $v\in V(G)$ and $c$ is fixed as $4$. It follows that diam$(\mathcal{G}(G,L))\leq~4|V(G)|$ for such graphs. A graph $G$ is said to be without mutually adjacent $C_1$-, $C_2$-, and $C_3$-cycles if $G$ does not contain the configuration involving $C_1$, $C_2$, and $C_3$ such that each pair of $C_1$, $C_2$, and $C_3$ are adjacent. Recently, Sittitrai and Nakprasit~\cite{sittitrai} have shown that a planar graph $G$ without mutually adjacent $3$-, $5$-, and $6$-cycles is $3$-degenerate. Therefore, Lemma~\ref{cranstondiam} is applicable if $|L(v)|\geq 8$ for every $v\in V(G)$ and $c$ is fixed as $4$. It follows that diam$(\mathcal{G}(G,L))\leq~4|V(G)|$ for such graphs. Note that the class of planar graphs without mutually adjacent $3$-, $5$-, and $6$-cycles is a superclass of planar graphs without $p$-cycles for some $p\in \{3,5,6\}$. Therefore, this result generalizes the previous results.  To the best of our knowledge, no result is known about the $3$-degeneracy of the planar graphs without $4$-cycles. In our fourth result, we fill this gap by proving that list size $8$ is sufficient for the planar graph without $4$-cycles to ensure a linear upper bound on the diameter of the recoloring graph.
	
\begin{theorem}\label{planarwithout4cycles}
Let $G$ be a planar graph without 4-cycles and $L$ be a list assignment of $G$ such that $|L(v)|\geq 8$ for every $v\in V(G)$. Then diam$(\mathcal{G}(G,L))\leq 29|V(G)|$.
\end{theorem}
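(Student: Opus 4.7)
The plan is to parallel the strategy used for Theorem~\ref{trianglewith6+faces}: first prove a structural lemma via a discharging argument tailored to the ``no $4$-cycle'' hypothesis, then exploit the identified reducible configurations to recolor at bounded amortised cost per vertex. The hypothesis forbids $4$-faces, forces every edge to lie in at most one triangle, and therefore forces every $3$-face to have its three incident faces of length at least $5$, so the local structure around $3$-faces is tightly constrained.

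\textbf{Step 1 (structural lemma via discharging).} I would first assign initial charges $d(v)-4$ to each vertex $v$ and $\ell(f)-4$ to each face $f$; by Euler's formula the total is $-8$. Using a balanced charging rule that sends charge from large faces and high-degree vertices toward $3$-faces and low-degree vertices, I would argue that unless $G$ contains one of a finite list $\mathcal{H}$ of ``reducible'' configurations, every vertex and face ends up with nonnegative final charge, a contradiction. The configurations in $\mathcal{H}$ would consist of vertices of small degree, and of small-degree vertices incident to one or two $3$-faces with prescribed neighbour structure.

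\textbf{Step 2 (recolouring at a reducible configuration).} For each $H \in \mathcal{H}$, I would show that when $|L(v)|\geq 8$, given any two $L$-colourings $\alpha,\beta$ of $G$, one can transform $\alpha$ into an $L$-colouring that agrees with $\beta$ on $V(H)$ using only a bounded number of recolouring moves, each of which touches a vertex in or adjacent to $V(H)$. Processing $V(H)$ in a carefully chosen order, every vertex of $H$ will have at most $7$ coloured neighbours at the moment it must be recoloured, so at least one colour of its list is free; when this is not immediately the case, one auxiliary recolouring of a neighbour liberates a colour, thanks to the safety margin provided by having $8$ colours.

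\textbf{Step 3 (induction and accounting).} I would then induct on $|V(G)|$: locate a reducible configuration, spend a bounded number of moves to equalise $\alpha$ with $\beta$ on it, remove its interior vertices, and apply the induction hypothesis to the residual graph with the lists of former boundary neighbours shrunk by at most one. Amortising the worst-case number of moves spent per removed vertex across all configurations in $\mathcal{H}$ yields the claimed constant $29$.

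\textbf{Main obstacle.} The crux is Step~1. Since it is not known whether planar graphs without $4$-cycles are $3$-degenerate, merely finding a vertex of degree $\leq 3$ is not enough, and I expect the discharging to be genuinely needed to produce configurations built around $3$-faces sharing common vertices. Balancing the rules so that exactly the configurations that remain recolourable with $8$ colours appear will be the delicate combinatorial step, and the relatively large constant $29$ reflects the cost of amortising over these slightly heavier configurations.
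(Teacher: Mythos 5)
Your overall architecture (structural lemma by discharging, then induction with bounded recoloring cost) is the right genre, but there are two genuine gaps. First, Step 1 is only a promise: you never exhibit the finite list $\mathcal{H}$ of reducible configurations, and you yourself identify this as the crux. The paper sidesteps this entirely by quoting an existing structural result of Shen et al.\ (Lemma~\ref{strplanarwithout4cycles}): in a plane graph with $\delta(G)\geq 4$ and no $4$-cycles there is a $4$-vertex $v$ incident to two non-adjacent $3$-faces $\{v,v_1,v_2\}$ and $\{v,v_3,v_4\}$ with all but at most one of $v_1,\dots,v_4$ of degree exactly $4$. Without either citing such a result or actually carrying out the discharging, your proof does not get off the ground, since (as you note) one cannot fall back on $3$-degeneracy for this class.

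Second, your Step 3 induction scheme is set up in the wrong order and would fail as stated. You propose to first make $\alpha$ agree with $\beta$ on $V(H)$, delete $V(H)$, and recurse on the residual graph ``with the lists of former boundary neighbours shrunk by at most one.'' Shrinking lists destroys the induction hypothesis ($|L(v)|\geq 8$ is needed throughout), and freezing the colors on $V(H)$ forces every subsequent move in the residual graph to respect those frozen colors, which is exactly what the shrunken lists cannot guarantee when a boundary vertex sees several vertices of $H$. The paper (and Cranston, and Dvo\v{r}\'ak--Feghali) does the reverse: first obtain by induction a recoloring sequence on $G-V(H)$ with full lists, then add the vertices of $H$ back one at a time using Lemma~\ref{keylemma}, which bounds the number of recolorings of a reinserted vertex $v$ by $\big\lceil \tfrac{t}{|L(v)|-d(v)-1}\big\rceil+1$, where $t$ counts the recolorings of its neighbours. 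This is why the configuration must consist of low-degree vertices ($|L(v)|\geq d(v)+2$ is required at each reinsertion), and it is this recursive arithmetic, applied to the Shen et al.\ configuration in the order $v_2,v_3,v_4,v_1$ (after first handling any $3^-$-vertex), that produces the constant $29$; your amortisation heuristic does not by itself justify that number.
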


For a graph $G$, a set $S\subseteq V(G)$ is an \emph{independent set} of $G$ if the vertices of $S$ are pairwise non-adjacent in $G$. The independence number of a graph $G$ is the maximum cardinality of an independent set of $G$. Recently, Merkel~\cite{merkel} proved that if $G$ is a $k$-colorable graph with the independence number at most~$2$, then diam$(\mathcal{G}(G,k+1))\leq 4|V(G)|$. We generalize the result of Merkel by proving the following theorem.

\begin{theorem}\label{boundedindependence}
Let $G$ be a $k$-colorable graph with the independence number at most $p$. Then diam$(\mathcal{G}(G,\ell))\leq 4|V(G)|$ when $\ell\geq \big\lfloor \frac{p\cdot k}{2}\big\rfloor+1$. Moreover, for $\ell=\big\lfloor \frac{p\cdot k}{2}\big\rfloor$, there exists a $k$-colorable graph $G'$ with the independence number at most $p$ such that $\mathcal{G}(G',\ell)$ is disconnected. 
\end{theorem}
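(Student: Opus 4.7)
The proof has two parts. For the upper bound, first observe that every proper $k$-coloring of $G$ partitions $V(G)$ into $k$ independent sets each of size at most $p$, so $|V(G)|\leq pk$; combining with the hypothesis $\ell\geq\lfloor pk/2\rfloor+1$ yields $\ell>|V(G)|/2$. This slack is what drives the recoloring construction.

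Fix any proper $k$-coloring $c_0$ of $G$ using colors $\{1,\ldots,k\}\subseteq\{1,\ldots,\ell\}$, with color classes $V_1,\ldots,V_k$, each of size at most $p$. It suffices to show that every $\ell$-coloring $\alpha$ can reach $c_0$ within $2|V(G)|$ single-vertex recolorings, because the triangle inequality through $c_0$ then gives diam$(\mathcal{G}(G,\ell))\leq 4|V(G)|$. I would realize this via a sweep algorithm that processes $V_1,V_2,\ldots,V_k$ in order. At the start of sweep $i$, classes $V_1,\ldots,V_{i-1}$ already sit at their $c_0$-colors. For each $v\in V_i$ not already colored $i$, I would first displace every neighbor $u$ currently colored $i$ --- necessarily $u\in V_j$ for some $j>i$ since $V_i$ is independent --- by routing $u$ to its own target $c_0(u)=j$ whenever that is consistent with the current coloring, and then recolor $v$ to $i$. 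Conflicts with processed classes are automatic non-issues since their colors lie in $\{1,\ldots,i-1\}$. Under the park-at-target rule together with the slack $\ell>|V(G)|/2$, a careful amortization shows that each vertex is recolored at most twice: once as a displaced neighbor and once as a final move, giving at most $2|V(G)|$ moves in total.

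For the lower bound, I would take $G'=K_{p,p,\ldots,p}$, the complete multipartite graph with $k$ parts of size $p$, so $\chi(G')=k$ and $\alpha(G')=p$. Any proper $\ell$-coloring with $\ell=\lfloor pk/2\rfloor$ assigns pairwise disjoint color sets $C_1,\ldots,C_k$ to the parts with $\sum_i|C_i|\leq\ell$. The cleanest case is $p=2$: then $\ell=k$ forces $|C_i|=1$ for every $i$, so each part is monochromatic with a distinct color from $\{1,\ldots,k\}$. Any single-vertex recoloring would demand a color outside $\{1,\ldots,k\}$, which does not exist, so every such coloring is frozen and $\mathcal{G}(G',k)$ is a graph of isolated vertices, hence disconnected. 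For general $p\geq 2$, an analogous balanced coloring, in which each part uses a dedicated $\lfloor p/2\rfloor$-sized subset of the palette and the $C_i$ saturate the $\ell$ colors, yields a similar configuration that witnesses the tightness of the threshold $\lfloor pk/2\rfloor+1$.

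The principal obstacle is the two-move-per-vertex bookkeeping in the upper bound: a naive greedy resolution of conflicts could cascade. The crucial idea is to park every displaced vertex directly at its $c_0$-target, so that once parked it is never displaced again in a subsequent sweep; the slack $\ell>|V(G)|/2$ is precisely what makes the remaining obstructions resolvable.
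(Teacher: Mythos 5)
Your proposal has two genuine gaps, one in each half of the theorem. For the upper bound, the sweep-and-displace scheme is not actually carried out: when you want to give $v\in V_i$ its target color $i$ and must first displace a neighbor $u\in V_j$ (currently colored $i$) to its own target $j$, that move can be blocked by a neighbor of $u$ currently colored $j$, which must itself be displaced first, and so on --- this is exactly the cascade you name as ``the principal obstacle,'' but the park-at-target rule does not resolve it, because parking at the target is precisely the move that may be improper. You observe the pigeonhole fact $\ell>|V(G)|/2$ but never use it to break the cascade. The paper's argument uses it in an essentially different way (its Lemmas~\ref{l1} and~\ref{l2} and Claim~\ref{mainclaim}): at every moment some color class of the current coloring has at most one vertex, say with color $i$; one then recolors an \emph{entire} color class $C$ of a fixed optimal coloring $\gamma$ to the color $i$ (this is always proper, since the only vertex already colored $i$ lies in $C$ and $C$ is independent), deletes $C$, and inducts on $\chi(G)$. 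This costs at most one recoloring per vertex to reach a coloring inducing the same partition as $\gamma$, and the Renaming Lemma (Lemma~\ref{renaminglem}) then bridges the two resulting colorings with two recolorings per vertex, giving $1+2+1=4$. The key idea you are missing is to move whole independent sets into a nearly-empty color class rather than to route individual vertices to their final colors.

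For the lower bound, $G'=K_{p,\ldots,p}$ does not work for $p\geq 3$. In your balanced coloring each part is an independent set with no internal edges, so a vertex $v$ in part $i$ sees on its closed neighborhood only the colors of the \emph{other} parts plus its own color; the remaining $\lfloor p/2\rfloor-1$ colors dedicated to part $i$ are available to $v$, so the coloring is not frozen. In fact for $k=2$, $p\geq 3$ the graph $\mathcal{G}(K_{p,p},p)$ is connected: one can make each side monochromatic and then freely permute, since $p\geq 3$ leaves a spare color. The paper's construction (Lemma~\ref{frozen}) avoids this by deleting a perfect matching from $K_{p,p}$, so that each vertex misses exactly one vertex of the opposite part; this lets all $p$ colors appear on \emph{each} side while every closed neighborhood still contains all $p$ colors, yielding a genuinely frozen coloring. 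The $k=3$ gadget and the join of copies for general $k$ are then needed to realize the floor $\big\lfloor\frac{pk}{2}\big\rfloor$ exactly; your construction does not recover these cases.
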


\subsection{Preliminaries}\label{prelim}

 For a plane graph $G$, we use $F(G)$ to denote the set of faces of $G$. For a vertex $v$ of a graph $G$, we use $d_G(v)$ to denote the degree of $v$ in $G$. For a plane graph $G$ and a face $f\in F(G)$, we use $d_G(f)$ to denote the length of it, that is the number of edges incident on its boundary. Note that any cut edge incident to $f$ contributes $2$ to $d_G(f)$. The \emph{open neighborhood} of a vertex $v$, denoted by $N_G(v)$, is the set of vertices adjacent to $v$. We use $d(v)$, $d(f)$, and $N(v)$ to denote $d_G(v)$, $d_G(f)$, and $N_G(v)$, respectively if the context of the graph is clear. Let $k$ be a non-negative integer. A \emph{$k$-vertex} is a vertex of degree $k$. A \emph{$k^+$-vertex} (respectively \emph{$k^-$-vertex}) is a vertex of degree at least (respectively at most) $k$. A \emph{$k/k^+/k^-$-neighbor} of a vertex $v$ is a neighbor of $v$ that is a $k/k^+/k^-$-vertex. Let $G$ be a plane graph and $f\in F(G)$. The face $f$ is a \emph{$k$-face} if $d(f)=k$. The face $f$ is a \emph{$k^+$-face} (resp. \emph{$k^-$-face}) if the degree of $f$ is at least (resp. at most) $k$. Let $G$ be a graph and $S\subseteq V(G)$. We use $G[S]$ to denote the subgraph of $G$ induced by the vertices of $S$ and $G-S$ to denote the subgraph $G[V(G)\setminus S]$. In particular, if $S=\{v\}$, then we simply write $G-v$ to denote $G-S$. Given a path $P=v_1v_2\ldots v_p$ and two vertices $v,w\notin V(P)$ such that $v$ is adjacent to $v_1$ and $w$ is adjacent to $v_p$, we use $vP$ and $Pw$ to denote the paths $vv_1v_2\ldots v_p$ and $v_1v_2\ldots v_pw$, respectively. 

Let $G$ be a $L$-colorable graph, where $L$ is a list assignment of $G$. Let $\alpha$ and $\beta$ be two $L$-colorings of $G$. A \emph{recoloring sequence} $\sigma_G$ from $\alpha$ to $\beta$ is a path in $\mathcal{G}(G,L)$ between the vertices corresponding to $\alpha$ and $\beta$; clearly, this path corresponds to a sequence of $L$-colorings of $G$ such that two consecutive $L$-colorings differ at exactly one vertex of $G$. For a subgraph $H$ of $G$, we use $\alpha|_H$ to denote the restriction of $\alpha$ to $H$. Let $\sigma_H$ be a recoloring sequence of $H$ from $\alpha|_H$ to $\beta|_H$. Now $\sigma_G=\alpha^0\alpha^1\ldots\alpha^t$, with $\alpha^0=\alpha$ and $\alpha^t=\beta$, is an extension of $\sigma_H$ to $G$ if $\sigma_H$ is obtained from $\alpha|^0_H \alpha|^1_H\ldots \alpha|^t_H$ by deleting all but exactly one term from any subsequence of consecutive terms that repeat themselves. Note that if a vertex of $H$ is recolored $k$ times in $\sigma_H$, then it is recolored exactly $k$ times in any extension $\sigma_G$ of it. We refer to \cite{diestel} for the graph theoretic notations that are not defined here.  

\begin{lemma}[\cite{bartierarxiv,cranstonEJC}]\label{keylemma}
Let $L$ be a list assignment of a graph $G^*$, $\alpha$ and $\beta$ be two $L$-colorings of $G^*$, and $G^{**}=G^*-v$, where $v$ is a vertex of $G^*$ such that $|L(v)|\geq d_{G^*}(v)+2$. Let $\sigma_{G^{**}}$ be a recoloring sequence from $\alpha|_{G^{**}}$ to $\beta|_{G^{**}}$ and $t$ be the total number of recolorings of the vertices of $N_{G^*}(v)$ in $\sigma_{G^{**}}$. Then $\sigma_{G^{**}}$ can be extended to a recoloring sequence $\sigma_{G^*}$ from $\alpha$ to $\beta$ that recolors $v$ at most $\big\lceil \frac{t}{|L(v)|-d_{G^*}(v)-1}\big\rceil+1$ times.
\end{lemma}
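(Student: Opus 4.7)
The plan is to construct $\sigma_{G^*}$ by inserting recolorings of $v$ into $\sigma_{G^{**}}$ at well-chosen moments, using a ``look-ahead'' trick so that a single recoloring of $v$ is safe for a whole block of neighbor recolorings. Set $s := |L(v)| - d_{G^*}(v) - 1$; by hypothesis $s \geq 1$. Mark, in the order they occur in $\sigma_{G^{**}}$, the $t$ steps at which a neighbor of $v$ is recolored, and partition these marked steps into $m := \lceil t/s \rceil$ consecutive blocks $B_1, \ldots, B_m$, each of size at most $s$. Any step of $\sigma_{G^{**}}$ recoloring a non-neighbor of $v$ is copied into $\sigma_{G^*}$ verbatim, since it has no effect on $v$ or on any neighbor of $v$, and in particular may freely sit between two marked steps inside a block.

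Just before the first marked step of each block $B_i$, I would insert (if needed) one recoloring of $v$ to a color $c_i \in L(v)$ chosen to avoid two sets simultaneously: (a) the colors currently held by the at most $d_{G^*}(v)$ neighbors of $v$, and (b) the target colors of the at most $|B_i| \leq s$ recolorings that make up $B_i$. The total number of forbidden values is at most $d_{G^*}(v) + s = |L(v)| - 1$, so a valid $c_i$ exists in $L(v)$. With $v$ parked at $c_i$ throughout $B_i$, no color a neighbor of $v$ takes during $B_i$ equals $c_i$, so every intermediate coloring is a proper $L$-coloring of $G^*$; properness on $G^{**}$ is inherited directly from $\sigma_{G^{**}}$. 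After all blocks and all non-marked steps have been processed, the coloring on $G^{**}$ equals $\beta|_{G^{**}}$, and since $\beta$ is a proper $L$-coloring, a final recoloring of $v$ to $\beta(v)$ (if $v$ is not already on $\beta(v)$) produces $\beta$.

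The resulting $\sigma_{G^*}$ begins at $\alpha$, ends at $\beta$, and its restriction to $G^{**}$ collapses, after removing the repeated colorings introduced by the recolorings of $v$, to $\sigma_{G^{**}}$, so it is an extension in the sense of the definition. It performs at most $m + 1 = \lceil t/s \rceil + 1$ recolorings of $v$, matching the stated bound. The one genuinely non-trivial step, and the sole place the hypothesis $|L(v)| \geq d_{G^*}(v) + 2$ is consumed, is the existence of $c_i$: after excluding the $d_{G^*}(v)$ current neighbor colors and the up to $s$ target colors in $B_i$, at least one color of $L(v)$ still remains, which is exactly what the hypothesis ensures. Everything else is bookkeeping about where, in the linear order of $\sigma_{G^{**}}$, the at most $m+1$ new recolorings of $v$ are slotted in.
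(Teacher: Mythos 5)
Your proof is correct. The paper does not prove this lemma itself --- it imports it from \cite{bartierarxiv,cranstonEJC} --- and your argument is essentially the standard one from those sources: partition the $t$ neighbor-recolorings into $\lceil t/s\rceil$ blocks of size at most $s=|L(v)|-d_{G^*}(v)-1$, park $v$ on a color avoiding the at most $d_{G^*}(v)+s=|L(v)|-1$ colors that are current on $N_{G^*}(v)$ or are targets within the block, and finish with one recoloring of $v$ to $\beta(v)$. The key counting step and the verification that the restriction to $G^{**}$ collapses back to $\sigma_{G^{**}}$ are both handled correctly.
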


\subsection{Balanced charging}\label{balanced}

To prove Theorem~\ref{two3faceswithone3face}-\ref{trianglewith6+faces}, we use the discharging method. By the discharging method, we obtain some unavoidable configurations in the graphs of the considered classes. These unavoidable configurations allow us to use Lemma~\ref{keylemma} to obtain a recoloring sequence of size linear in the number of vertices.

Balanced charging is a well known method to assign charges to the vertices and the faces of a plane graph $G$. Each vertex $v\in V(G)$ is assigned the charge $d(v)-4$, and each face $f\in F(G)$ is assigned the charge $d(f)-4$. Let $ch(G)$ denote the total sum of the charges assigned to the vertices and the faces of $G$. That is,
$$ch(G)= \sum_{v\in V(G)}(d(v)-4)+\sum_{f\in F(G)}(d(f)-4).$$
Since every edge of $G$ contributes 2 to the vertex degree sum and 2 to the face length sum, we have $\displaystyle\sum_{v\in V(G)}d(v)=2|E(G)|$ and $\displaystyle\sum_{f\in F(G)}d(f)=2|E(G)|$. So we have
$$ch(G)= (2|E(G)|-4|V(G)|)+(2|E(G)|-4|F(G)|)=-4(|V(G)|+|F(G)|-|E(G)|).$$
Hence by Euler's formula, we have $ch(G)=-8$.

\section{Proof of Theorem~\ref{two3faceswithone3face}}

In a plane graph, we use \emph{diamond} to denote the configuration of two adjacent $3$-faces with only one edge common. Given a diamond, we use \emph{mid-edge} to denote the common edge of the adjacent $3$-faces. A diamond is said to be incident to a vertex $v$ if the mid-edge of the diamond is incident to $v$. Let $G$ be a plane graph and $v\in V(G)$. A diamond incident to $v$ is said to be a \emph{special diamond} if an edge of the diamond that is incident to $v$ (other than the mid-edge) is also incident to a $4^+$-face. In Figure~\ref{special}(a), the diamond induced by $\{v,v_1,v_2,v_3\}$ is a special diamond incident to $v$ whereas the diamond induced by $\{v,v_2,v_3,v_4\}$ is not a special diamond incident to $v$. Let $w_G^t(v)$ and $w_G^d(v)$ denote the total number of $3$-faces and the total number of special diamonds incident to $v$, respectively and let $w_G(v)=w_G^t(v)+w_G^d(v)$.

\begin{figure}[hbtp]
\centering
\includegraphics[scale=.8]{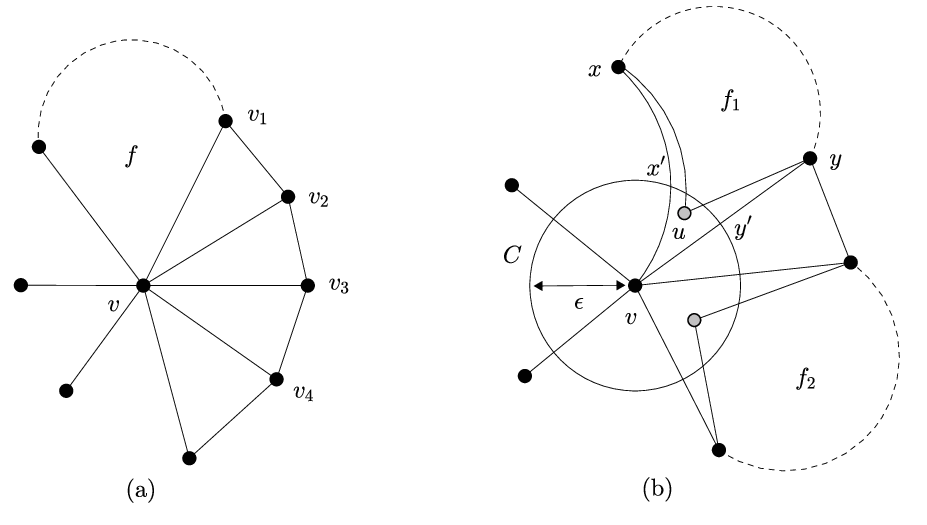}
\caption{The faces $f,f_1,$ and $f_2$ are $4^+$-faces.}\label{special}
\end{figure}

\begin{obs}\label{obser}
Given a plane graph $G$ and a fixed vertex $v$ of $G$, there exists a plane graph $G'$ such that \begin{enumerate*}
\item $d_{G'}(v)=d_{G}(v)$, \item there are $d_{G'}(v)$ distinct faces incident to $v$ in $G'$, and \item $w_{G'}(v)=w_{G}(v)$.
\end{enumerate*}
\end{obs}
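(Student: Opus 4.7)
The plan is to discard $G$ entirely and rebuild a fresh plane graph $G'$ carrying exactly the combinatorial data at $v$ that controls $w_G(v)$; because every corner at $v$ in $G'$ will use its own fresh internal vertices, distinctness of faces at $v$ will be automatic.

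Let $d=d_G(v)$ and list the edges at $v$ in the cyclic order induced by the plane embedding of $G$ as $e_1,\dots,e_d$. For $i\in\{1,\dots,d\}$ let $c_i$ be the angular corner at $v$ between $e_i$ and $e_{i+1}$ (indices mod $d$), and let $F_i$ be the face of $G$ containing $c_i$. Call $c_i$ of \emph{type A} if $F_i$ is a $3$-face and of \emph{type B} otherwise. Two facts are essentially immediate from the definitions: $w_G^t(v)$ equals the number of type-A corners, and the diamonds incident to $v$ are in one-to-one correspondence with the pairs of cyclically consecutive type-A corners $(c_i,c_{i+1})$, sharing mid-edge $e_{i+1}$; such a diamond is special iff at least one of its two neighbouring corners (namely $c_{i-1}$ and $c_{i+2}$) is of type B.

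Assume $d\ge 3$ (the cases $d\le 2$ are handled by direct inspection). Construct $G'$ as follows. Place $v$ in the plane and attach $d$ fresh neighbours $u_1,\dots,u_d$ via edges $e'_1,\dots,e'_d$ drawn so that they appear in this cyclic order around $v$. In the sector between $e'_i$ and $e'_{i+1}$: if $c_i$ is of type A, draw the single edge $u_iu_{i+1}$, producing a $3$-face at that corner; if $c_i$ is of type B, introduce two fresh vertices $x_{i,1},x_{i,2}$ and draw the path $u_i\,x_{i,1}\,x_{i,2}\,u_{i+1}$, producing a $5$-face at that corner. Every new internal vertex belongs to exactly one sector, so $G'$ is a simple plane graph in which the $d$ corners at $v$ lie in $d$ pairwise distinct faces.

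Properties (i) and (ii) are now immediate. For (iii), $w^t_{G'}(v)$ equals the number of type-A corners $=w^t_G(v)$; the bijection between diamonds incident to $v$ and pairs of consecutive type-A corners is identical in $G$ and in $G'$; and the ``special'' status of a diamond is determined entirely by the types of its two neighbouring corners. Since the cyclic sequence of types is preserved by construction (and every type-B corner in $G'$ creates a $5$-face, which is a $4^+$-face), we conclude $w^d_{G'}(v)=w^d_G(v)$ and therefore $w_{G'}(v)=w_G(v)$. The main subtlety sidestepped is that in $G$ several corners at $v$ may coincide in the same face (for example, when $v$ is a cut vertex or lies on a pendant edge), which is exactly the phenomenon (ii) is designed to eliminate; the fresh construction handles this transparently by giving every corner its own private face.
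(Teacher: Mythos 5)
Your proof is correct, but it takes a genuinely different route from the paper's. The paper keeps $G$ and performs local surgery: whenever a $4^+$-face occupies two consecutive corners at $v$, it inserts a fresh vertex $u$ in that sector together with edges to the two neighbours of $v$ bounding it, thereby splitting off a private $4$-face at that corner without disturbing the count of $3$-faces or special diamonds; iterating yields $G'$. You instead throw $G$ away and rebuild a fresh gadget from nothing but the cyclic sequence of corner types at $v$, realizing each type-A corner as a triangle and each type-B corner as a $5$-face. What your approach buys is that it isolates and makes explicit the real content of the observation, namely that $w_G(v)$ is a function of the cyclic type sequence alone (via the correspondence: $3$-faces at $v$ $\leftrightarrow$ type-A corners, diamonds incident to $v$ $\leftrightarrow$ consecutive type-A pairs, speciality $\leftrightarrow$ a type-B neighbour corner); the paper's surgery is shorter to state but leaves this invariance implicit. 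Two small points you should make explicit: first, your bijection between diamonds and consecutive type-A pairs uses that two $3$-faces meeting at consecutive corners of $v$ share \emph{only} the edge $e_{i+1}$ (required by the paper's definition of a diamond); this does hold in a simple plane graph once $d_G(v)\ge 3$, since the two triangles $vab$ and $vbc$ with $a\neq c$ can share no edge besides $vb$, but it is exactly the kind of degeneracy your ``essentially immediate'' elides. Second, a type-B corner adjacent to a type-A corner is automatically a $4^+$-face (its boundary contains an edge of a triangle, hence is not a bridge and the face has length at least $3$, and being non-$3$ it has length at least $4$), which is what licenses equating ``type B'' with ``$4^+$'' on both sides of your comparison. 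Neither point is a gap, and your deferral of $d\le 2$ to inspection is harmless since no diamond can be incident to a vertex of degree at most $2$.
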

\begin{proof}
 Assume that there are less than $d_G(v)$ distinct faces incident to $v$. Then we move around $v$ in the clockwise manner and visit the edges incident to $v$. Consider a circle $C$ with the centre $v$ and sufficiently small radius, say $\epsilon$. Assume that there are two consecutive edges $vx$ and $vy$ incident to a $4^+$-face $f_1$. Then we add a vertex $u$ in the inner region of the sector $x'vy'$, where $x'$ and $y'$ are the points on the intersection of $C$ with the edges $vx$ and $vy$, respectively. Then add edges $ux$ and $uy$ such that $ux$ and $uy$ are drawn closely to $vx$ and $vy$, respectively (see Figure~\ref{special}(b)). Let $G'$ be the resulting graph. Note that the motivation of such a construction of $G'$ is to ensure that $G'$ is a plane graph. Further note that $d_{G'}(v)=d_{G}(v)$ and there are exactly $d_{G'}(v)$ distinct faces incident to $v$. Moreover, since the total number of $3$-faces and special diamonds incident to $v$ remains the same, we have $w_{G'}(v)=w_{G}(v)$.
\end{proof}

\begin{lemma}\label{totaltrianglemidedge}
Let $G$ be a plane graph and $v\in V(G)$ such that $d_G(v)\geq 7$. Then $w_G(v)\leq 3(d_G(v)-4)$.
\end{lemma}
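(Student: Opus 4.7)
The plan is to reduce the statement to a combinatorial optimization over a cyclic binary sequence of length $d := d_G(v)$. First, by Observation~\ref{obser} applied to $v$, I may assume that $v$ has $d$ pairwise distinct incident faces $f_0, f_1, \ldots, f_{d-1}$ arranged cyclically, with both $d_G(v)$ and $w_G(v)$ unchanged; let $e_i$ be the edge at $v$ shared by the boundaries of $f_{i-1}$ and $f_i$ (indices mod $d$). I would then encode each $f_i$ as $T$ if it is a $3$-face and as $N$ if it is a $4^+$-face.

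Next I would translate the two notions of weight into this language. Since two distinct $3$-faces of a simple graph share at most one edge, a diamond incident to $v$ with mid-edge $e_i$ is exactly a consecutive pair $(f_{i-1}, f_i) = (T, T)$ in the cyclic sequence, and the remaining edges of this diamond at $v$ are $e_{i-1}$ and $e_{i+1}$. Hence the diamond is special iff at least one of $f_{i-2}, f_{i+1}$ equals $N$. If every $f_i$ is of type $T$, then no diamond can be special, so $w_G(v) = d \leq 3(d-4)$ for $d \geq 6$, which handles that degenerate subcase.

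In the remaining case, the cyclic sequence decomposes into $m \geq 1$ maximal runs of $T$'s of lengths $\ell_1, \ldots, \ell_m$ separated by gaps of $N$'s of sizes $g_1, \ldots, g_m \geq 1$, with $\sum_j (\ell_j + g_j) = d$. Within a run of length $\ell$, only the first and last TT pairs are flanked (within the cyclic sequence) by an $N$ and hence special; this gives a contribution $s(\ell)$ equal to $0, 1, 2$ when $\ell = 1, 2, \geq 3$, respectively. Thus $w_G(v) = \sum_j \bigl(\ell_j + s(\ell_j)\bigr)$. I would then verify the per-run ratio bound $(\ell + s(\ell))/(\ell + 1) \leq 5/4$ by a short finite check, the maximum $5/4$ being attained at $\ell = 3$; since $g_j \geq 1$, this upgrades to $(\ell_j + s(\ell_j))/(\ell_j + g_j) \leq 5/4$, and summing over $j$ yields $w_G(v) \leq \tfrac{5}{4} d$.

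To conclude, $\tfrac{5}{4} d \leq 3(d-4)$ rearranges to $7d \geq 48$, which is satisfied for every integer $d \geq 7$ (with slack $8.75 \leq 9$ at $d = 7$). The main obstacle is the translation step: carefully confirming that the diamond at mid-edge $e_i$ is determined by the two adjacent $3$-faces $f_{i-1}, f_i$ (using the fact that two distinct $3$-faces in a simple graph share at most one edge), and that the other two edges of the diamond at $v$ are exactly $e_{i-1}$ and $e_{i+1}$; once this dictionary is established, the optimization over run lengths and the final arithmetic are routine.
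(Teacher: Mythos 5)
Your proof is correct, but it takes a genuinely different route from the paper's. The paper proves Lemma~\ref{totaltrianglemidedge} by induction on $d_G(v)$: the base case $d_G(v)=7$ is settled by a case analysis on the number of $4^+$-faces at $v$, and the inductive step picks a special diamond, deletes its mid-edge $vv_2$ (adding $v_1v_3$ to keep a plane graph of smaller degree at $v$), and argues that $w_G(v)$ drops by at most $2$ while the target $3(d_G(v)-4)$ drops by $3$. You instead give a direct, non-inductive count: after normalizing with Observation~\ref{obser}, you encode the cyclic face sequence around $v$, observe that a maximal run of $\ell$ triangles contributes exactly $\ell+s(\ell)$ with $s(\ell)=\min\{\ell-1,2\}$ (only the two extreme mid-edges of a run are flanked by a $4^+$-face, hence only they can be special), and amortize each run against its following gap to obtain $w_G(v)\le \tfrac{5}{4}\,d_G(v)$, which is strictly stronger than $3(d_G(v)-4)$ for every $d_G(v)\ge 7$ and is tight for the repeated pattern $TTTN$. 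Your dictionary between diamonds and consecutive $TT$ pairs is sound; the one nitpick is the justification ``two distinct $3$-faces of a simple graph share at most one edge,'' which fails for $K_3$ itself, but in your setting the two faces meet $v$ along distinct edges $e_{i-1}\ne e_{i+1}$ (as $d_G(v)\ge 7$ and the faces are distinct), so they indeed share only the mid-edge. Your approach buys a sharper, more transparent bound with no induction and no base-case casework; the paper's induction avoids the run bookkeeping at the cost of the somewhat delicate edge-replacement step.
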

\begin{proof}
 Without loss of generality, by Observation~\ref{obser}, we may assume that there are $d_G(v)$ distinct faces incident to $v$. We use induction on $d_G(v)$ to prove that for any plane graph $G$ and $v\in V(G)$ with $d_G(v)\geq 7$, we have $d_G(v)\leq 3(d_G(v)-4)$. 

To show the base case, assume that $G$ is a plane graph and $v\in V(G)$ with $d_G(v)=7$. We need to show that $w_G(v)\leq 9$. If all the faces incident to $v$ are $3$-faces, then there is no special diamond incident to $v$ and hence $w_G(v)=w_G^t(v)=7< 9$. Assume that there is exactly one $4^+$-face incident to $v$. Then there are six $3$-faces incident to $v$ and hence $w_G^t(v)=6$. Note that the $4^+$-face causes two diamonds incident to $v$ to be the special diamonds. So $w_G^d(v)=2$ and hence $w_G(v)=2+6=8$. Now assume that there are at least two $4^+$-faces incident to $v$. Then there are at most five $3$-faces incident to $v$ and hence $w_G^t(v)\leq 5$. Note that an edge incident to both $v$ and a $4^+$-face cannot be the mid-edge of a special diamond incident to $v$. Since there are at least two $4^+$-faces incident to $v$, at least three edges incident to $v$ are not mid-edges of special diamonds incident to $v$. So at most four edges incident to $v$ are the mid-edges of special diamonds incident to $v$ and hence there are at most four special diamonds incident to $v$. So we have $w_G^d(v)\leq 4$. Hence $w_G(v)=w_G^t(v)+w_G^d(v)\leq 5+4=9$. Therefore, the base case of the induction is true.
 
Now assume that the induction hypothesis is true when $d_G(v)<k$, that is for any plane graph $G$ and any vertex $v\in V(G)$ such that $7\leq d(v)<k$, we have $w_G(v)\leq 3(d_G(v)-4)$. Let $d_G(v)=k$. We need to show that $w_G(v)\leq 3(k-4)$. If there is no special diamond incident to $v$, then $w_G^d(v)=0$ and $w_G^t(v)\leq d_G(v)=k$ and hence $w_G(v)\leq k\leq 3(k-4)$. So assume that there is a special diamond  induced by $\{v,v_1,v_2,v_3\}$ with $vv_2$ as the mid-edge. Now consider a plane graph $G'$ obtained by removing the edge $vv_2$ and adding the edge $v_1v_3$ in $G$. Since $d_{G'}(v)=k-1$, by the induction hypothesis, $w_{G'}(v)\leq 3(k-1-4)=3(k-5)$. Note that if $vv_1$ is the mid-edge of a special diamond incident to $v$ in $G$, then $vv_1$ is also the mid-edge of a special diamond incident to $v$ in $G'$. Similarly, if $vv_3$ is the mid-edge of a special diamond incident to $v$ in $G$, then $vv_3$ is also the mid-edge of a special diamond incident to $v$ in $G'$. So $v$ has at most one more mid-edge of a special diamond, that is possibly $vv_2$, and one more $3$-face incident to $v$ in $G$ than in $G'$. So $w_G(v)-w_{G'}(v)\leq 2$. Hence $w_G(v)\leq w_{G'}(v)+2\leq 3(k-5)+2< 3(k-4)$. This completes the proof of the lemma.
\end{proof}

\begin{figure}[hbtp]
\centering
\includegraphics[scale=.9]{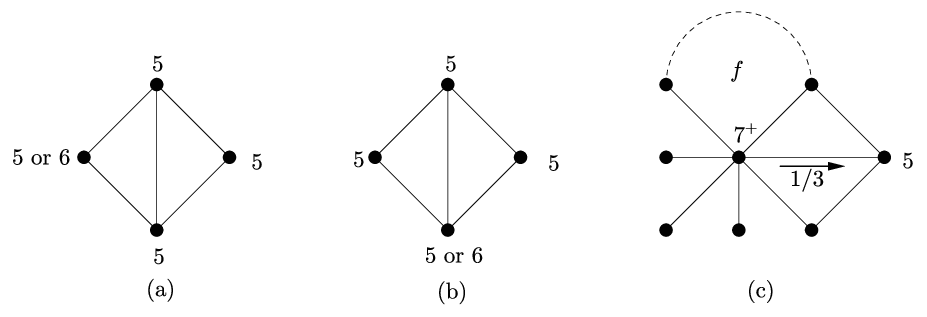}
\caption{The numbers next to the vertices represent their degrees and $f$ is a $4^+$-face. In (c), we partially represent the discharging Rule~$R_3$ given in the proof of Theorem~\ref{strtwo3faceswithone3face}.}\label{figure2}
\end{figure}

\begin{theorem}\label{strtwo3faceswithone3face}
If $G\in \mathcal{G}_1$ and $\delta(G)\geq 5$, then $G$ contains a diamond with three vertices of degree $5$ and one vertex of degree at most $6$ (see Figure~\ref{figure2}(a)-(b)).
\end{theorem}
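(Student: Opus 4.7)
Plan. I intend to prove the claim by the discharging method with the balanced charging scheme of Subsection~\ref{balanced}: each vertex $v$ receives $\mu(v)=d(v)-4$ and each face $f$ receives $\mu(f)=d(f)-4$, so that by Euler's formula the total initial charge is $-8$. I would assume for contradiction that $G\in\mathcal{G}_1$ satisfies $\delta(G)\geq 5$ but contains no diamond with three $5$-vertices and one vertex of degree at most $6$, and would then design discharging rules $R_1$, $R_2$, $R_3$ (the last one partly depicted in Figure~\ref{figure2}(c)) so that every vertex and every face ends with nonnegative charge. This contradicts the total being $-8$, finishing the proof.

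Since $\delta(G)\geq 5$, only $3$-faces carry negative initial charge (each with deficit $-1$). The natural first rule $R_1$ has every $5^+$-vertex send $\tfrac{1}{3}$ to each incident $3$-face; this exactly wipes the $3$-face deficit. A $5$-vertex incident to $t$ triangles is then left with $1-t/3$, negative precisely for $t\in\{4,5\}$; every $6$-vertex stays $\geq 0$; and by Lemma~\ref{totaltrianglemidedge} every $7^+$-vertex $v$ has room to ship an additional $\tfrac{1}{3}$ across each mid-edge of an incident special diamond, because $w(v)\le 3(d(v)-4)=3\mu(v)$. This additional stream (rule $R_2$) lets the charge harvested at $7^+$-vertices reach diamond mid-edges, from which rule $R_3$ forwards it to an incident $5$-vertex whenever that $5$-vertex is over-triangled.

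The assumed absence of the forbidden configuration enters through the following observation: in any diamond $D$ incident to an over-triangled $5$-vertex $v$, the three non-$v$ vertices of $D$ cannot all be $5$-vertices, and if exactly two of them are, the remaining one must have degree at least $7$; otherwise $D$ would be of the forbidden type. So every diamond at $v$ contains either two $6^+$-vertices besides $v$ or at least one $7^+$-vertex, providing a charge reservoir that $R_2$ and $R_3$ can tap. Since every edge at $v$ is the mid-edge of such a diamond when $t\in\{4,5\}$, $v$ will receive the required $\tfrac{1}{3}$ (if $t=4$) or $\tfrac{2}{3}$ (if $t=5$) of compensation.

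The main obstacle will be calibrating $R_2$ and $R_3$ simultaneously so that (i) every over-triangled $5$-vertex is fully rescued, (ii) no $6$-vertex is pushed below $0$ (delicate, because a $6$-vertex with six incident triangles has no slack after $R_1$), and (iii) the Lemma~\ref{totaltrianglemidedge} budget of $7^+$-vertices is respected. The hardest subcase is when $v$ is a $5$-vertex with all five incident faces being $3$-faces, so $v$ has no incident $4^+$-face and no incident special diamond, and the full $\tfrac{2}{3}$ compensation must be routed to $v$ through the wheel formed by its five neighbors. I expect the contradiction to follow from a finite case analysis over how the neighbors of such a $v$ distribute among $\{5,6,\geq 7\}$, using both the $\mathcal{G}_1$ hypothesis (no $3$-face has three $3$-face neighbors) and the assumed absence of the forbidden diamond to exclude every bad pattern.
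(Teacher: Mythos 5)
Your plan follows the same route as the paper: balanced charging, Rule~$R_1$ sending $1/3$ from each vertex to each incident $3$-face, compensation to over-triangled $5$-vertices across mid-edges of special diamonds, and Lemma~\ref{totaltrianglemidedge} to budget the $7^+$-vertices. But as written it has genuine gaps rather than just unexecuted routine steps. First, your rules are never pinned down, and your narrative routes the compensating charge only from $7^+$-vertices. That cannot work: a $5$-vertex $v$ with five incident $3$-faces may have all five neighbors of degree exactly $6$ (this creates no forbidden diamond), so $6$-vertices must also send charge. The paper has a $6$-vertex send $1/6$ per special-diamond mid-edge, and the reason this is safe --- the fact that a $6$-vertex incident to $t$ $4^+$-faces is incident to at most $2t$ special diamonds, so the $\frac{1}{6}\cdot 2t$ it sends is exactly offset by the $\frac{1}{3}t$ it saves on faces --- is precisely the calibration you flag as ``the main obstacle'' and then leave unresolved. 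Second, your statement that ``every edge at $v$ is the mid-edge of such a diamond when $t\in\{4,5\}$'' is false for $t=4$: only the three interior edges $vv_2,vv_3,vv_4$ are mid-edges of diamonds incident to $v$ (this happens not to hurt, since only $1/3$ is needed then, but it indicates the case analysis has not actually been run).

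Two further points you should make explicit. The charge must cross the mid-edge, so the high-degree vertex you need is specifically the far endpoint of the mid-edge, i.e.\ a neighbor $v_i$ of $v$; your observation about ``the three non-$v$ vertices of $D$'' does not localize the reservoir there, whereas the paper argues directly about the degree distribution of $v$'s neighbors (e.g.\ for $t=5$ with no $7^+$-neighbor, the absence of the forbidden diamond forces at least four $6$-neighbors, each contributing $1/6$). Finally, the $\mathcal{G}_1$ hypothesis is what makes the relevant diamonds \emph{special}: each $3$-face at an over-triangled $v$ already has two adjacent $3$-faces at $v$, so its third adjacent face must be a $4^+$-face, which is exactly what certifies $vv_i$ as the mid-edge of a special diamond incident to $v_i$ and triggers the transfer rule. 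You invoke $\mathcal{G}_1$ but never use it for this purpose, and without it no rule keyed to special diamonds ever fires at the hardest vertex.
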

\begin{proof}
Let $G\in \mathcal{G}_1$ and $\delta(G)\geq 5$. Consider a plane embedding of $G$ in which for each $3$-face of $G$, there are at most two $3$-faces adjacent to it. For the sake of contradiction, assume that $G$ does not contain the configuration $H$, where $H$ is a diamond with three vertices of degree $5$ and one vertex of degree at most $6$ (see Figure~\ref{figure2}(a)-(b)). Note that two $3$-faces cannot share more than one edge since $\delta(G)\geq 5$. Assign charges to the vertices and the faces of $G$ using the balanced charging method (see Subsection~\ref{balanced}). Now we re-allocate the charges using the following discharging rules.
\begin{enumerate}
\item[$R_1$:] Every vertex sends charge $1/3$ to every $3$-face incident to it.
\item[$R_2$:] Let $v$ be a $6$-vertex. If there is a $5$-vertex $u$ such that $uv$ is the mid-edge of a special diamond incident to $v$, then $v$ sends charge $1/6$ to $u$.
\item[$R_3$:] Let $v$ be a $7^+$-vertex. If there is a $5$-vertex $u$ such that $uv$ is the mid-edge of a special diamond incident to $v$, then $v$ sends charge $1/3$ to $u$. In Figure~\ref{figure2}(c), Rule~$R_3$ is illustrated.
\end{enumerate}
Now we show that at the end of the discharging process, every vertex and every face has non-negative charge to obtain a contradiction to the fact that $ch(G)=-8$.\medskip

\textbf{$3$-face:} Let $f$ be a $3$-face. Initially, $f$ had charge $-1$. Since $f$ gets charge $1/3$ from each of its incident vertices by Rule~$R_1$, $f$ has non-negative charge at the end of the discharging process.

\textbf{$4^+$-face:} Let $f$ be a $4^+$-face. Initially, $f$ had non-negative charge. Since $f$ neither gains nor loses any charge during the discharging process, its charge remains the same, that is non-negative. 

\textbf{$5$-vertex:} Let $v$ be a $5$-vertex of $G$. Initially, $v$ had charge $1$. If there are at most three $3$-faces incident to $v$, then $v$ loses charge at most $3\cdot(1/3)=1$ and hence has non-negative charge at the end of the discharging process. So we may assume that there are at least four $3$-faces incident to $v$. 

\noindent\textbf{Case~1.} There are five $3$-faces incident to $v$. 

Note that $v$ loses charge $5\cdot (1/3)$ to the faces incident to it. Since $v$ can afford to lose charge at most $1$, it must regain charge $2/3$. Note that for every $3$-face incident to $v$, there are two $3$-faces adjacent to it. Hence by the definition of the class $\mathcal{G}_1$, for each of the $3$-faces incident to $v$, there must be a $4^+$-face adjacent to it. We can infer that if $u$ is a neighbor of $v$, then the edge $uv$ is the mid-edge of a special diamond incident to $u$. If $v$ has at least two $7^+$-neighbors, then by Rule~$R_3$, $v$ gets sufficient charge. Assume that $v$ has exactly one $7^+$-neighbor. Then $v$ has four remaining $6^-$-neighbors. Since $\delta(G)\geq 5$, each of these remaining four neighbors of $v$ is a $5$-vertex or a $6$-vertex. If at most one of these four neighbors of $v$ is $6$-vertex, then $G[N(v)\cup \{v\}]$ contains the configuration $H$, a contradiction. So at least two among these four $6^-$-neighbors of $v$ are $6$-vertices.  
 Hence $v$ gets charge $1/3$ from the $7^+$-neighbor by Rule~$R_3$ and at least $2\cdot (1/6)$ from the $6$-neighbors by Rule~$R_2$. Hence $v$ gets sufficient charge to make its charge non-negative. Finally, assume that $v$ has no $7^+$-neighbor. If $v$ has at most three $6$-neighbors and the remaining are $5$-neighbors, then it is easy to verify that $G[N(v)\cup \{v\}]$ contains the configuration $H$, a contradiction. So $v$ has at least four $6$-neighbors. Hence $v$ gets sufficient charge, that is at least $4\cdot (1/6)$, by Rule~$R_2$ to make its charge non-negative. Hence $v$ has non-negative charge at the end of the discharging process.

\noindent\textbf{Case~2.} There are exactly four $3$-faces incident to $v$.

Note that $v$ loses charge $4\cdot (1/3)$ to the faces incident to it by Rule~$R_1$. Since $v$ can afford to lose charge at most $1$, it must regain charge $1/3$. Let $v_1,v_2,v_3,v_4$, and $v_5$ be the vertices adjacent to $v$ such that $\{v,v_i,v_{i+1}\}$ induces a $3$-face for every $i\in \{1,2,3,4\}$. Since $G\in \mathcal{G}_1$, for each $3$-face of $G$, there are at most two $3$-faces adjacent to it. So for each of the $3$-faces induced by $\{v,v_2,v_3\}$ and $\{v,v_3,v_4\}$, there must be a $4^+$-face adjacent to it. Note that $vv_2$, $vv_3$, and $vv_4$ are the mid-edges of special diamonds incident to $v_2$, $v_3$, and $v_4$, respectively. Now if any of $v_2,v_3,$ and $v_4$ is a $7^+$-vertex, then $v$ gets sufficient charge by Rule~$R_3$. So assume that each of $v_2,v_3,$ and $v_4$ is a $5$-vertex or a $6$-vertex. Now if at most one among $v_2,v_3,$ and $v_4$ is a $6$-vertex, then $G[\{v,v_2,v_3,v_4\}]$ contains the configuration $H$, a contradiction. So at least two of $v_2,v_3,$ and $v_4$ are $6$-vertices. Hence $v$ gets sufficient charge, that is $2\cdot (1/6)$, by Rule~$R_2$ and hence $v$ has non-negative charge at the end of the discharging process.

\textbf{$6$-vertex:} Let $v$ be a $6$-vertex. Initially, $v$ had charge $2$. Let the total number of $4^+$-faces incident to $v$ be $t$. Then the number of $3$-faces incident to $v$ is at most $6-t$. So $v$ loses charge at most $\frac{1}{3}(6-t)$ to the $3$-faces incident to it by Rule~$R_1$.  We claim that there are at most $2t$ special diamonds incident to $v$. If each $4^+$-face incident to $v$ contains only two incident edges to $v$ on its boundary, then this claim is trivial since each $4^+$-face can cause at most two diamonds incident to $v$ to be special diamonds. If some of the $4^+$-faces have exactly three edges incident to $v$ on its boundary, then it is easy to see that the situation does not change. So assume that there is a $4^+$-face having four or more edges incident to $v$ on its boundary. Then these edges are not the mid-edges of any special diamonds incident to $v$. It follows that at most $2$ special diamonds can be there which is clearly at most $2t$. So the claim holds. Therefore, $v$ loses charge at most $\frac{1}{6}(2t)$ through the mid-edges of the special diamonds incident to it by Rule~$R_2$. So $v$ loses charge at most $\frac{1}{3}(6-t)+\frac{1}{6}(2t)=2$ and hence $v$ has non-negative charge at the end of the discharging process.

\textbf{$7^+$-vertex:} Let $v$ be a $7^+$-vertex. Initially, $v$ had charge $d_G(v)-4$. Recall that $w_G(v)=w_G^t(v)+w_G^d(v)$, where $w_G^t(v)$ is the total number of the $3$-faces incident to $v$ and $w_G^d(v)$ is the total number of the special diamonds incident to $v$.  Since $v$ loses charge $1/3$ to each of the incident $3$-faces by Rule~$R_1$, $v$ loses charge $\frac{1}{3}w_G^t(v)$ to the incident $3$-faces. Recall that   by Rule~$R_3$, $v$ loses charge $1/3$ to a $5$-vertex $u$ if $uv$ is the mid-edge of a special diamond incident to $v$. Note that there are at most $w_G^d(v)$ such $5$-vertices. So $v$ loses charge at most $\frac{1}{3}w_G^d(v)$ due to Rule~$R_3$. Hence $v$ loses charge at most $\frac{1}{3}w_G^t(v)+\frac{1}{3}w_G^d(v)=\frac{1}{3}w_G(v)$. Since $d_G(v)\geq 7$, by Lemma~\ref{totaltrianglemidedge}, $w_G(v)\leq 3(d_G(v)-4)$. So $v$ loses charge at most $\frac{1}{3}\cdot 3(d_G(v)-4)=d_G(v)-4$ and hence $v$ has non-negative charge at the end of the discharging process.
\end{proof}


\begin{proof}[Proof of Theorem~\ref{two3faceswithone3face}]

Let $G\in \mathcal{G}_1$ and $\alpha$ and $\beta$ be any two $L$-colorings of $G$. In order to prove that diam$(\mathcal{G}(G,L))\leq 190|V(G)|$, we prove the claim that there exists a recoloring sequence from $\alpha$ to $\beta$ that recolors every vertex of $G$ at most $190$ times. We use induction on the number of vertices of $G$ to prove the claim. If $|V(G)|=1$, then the claim is trivial. So the base case is true. Now assume that there is a $4^-$-vertex $v\in V(G)$. By induction, there is a recoloring sequence $\sigma_{G-v}$ from $\alpha|_{G-v}$ to $\beta|_{G-v}$ that recolors every vertex of $G-v$ at most $190$ times. Note that the total number of recolorings of the vertices of $N(v)$ in $\sigma_{G-v}$ is at most $t=d(v)\cdot 190$. So by taking $G^*=G$, $G^{**}=G-v$, and $L(v)=10$ in Lemma~\ref{keylemma}, the sequence $\sigma_{G-v}$ can be extended to a recoloring sequence $\sigma_G$ from $\alpha$ to $\beta$ that recolors $v$ at most $\big\lceil \frac{d(v)\cdot 190}{10-d(v)-1}\big\rceil +1\leq 153<190$ times since $d(v)\leq 4$. 
Now we may assume that $\delta(G)\geq 5$. Then by Theorem~\ref{strtwo3faceswithone3face}, $G$ contains a diamond $H$ with three vertices of degree~$5$ and one vertex of degree at most~$6$. Let $V(H)=\{v_1,v_2,v_3,v_4\}$, where $v_1$ is the $6^-$-vertex and  $v_2,v_3$, and $v_4$ are $5$-vertices. For the rest of the proof, we assume that $v_1$ is a $6$-vertex. For the case when $v_1$ is a $5$-vertex, the proof is similar.

First assume that $v_1v_3$ is the mid-edge of $H$. By induction, there exists a recoloring sequence $\sigma_{G-V(H)}$ from $\alpha|_{G-V(H)}$ to $\beta|_{G-V(H)}$ that recolors every vertex of $G-V(H)$ at most $190$ times. Note that $d_{G-\{v_2,v_3,v_4\}}(v_1)=3$ and the total number of recolorings of the vertices of $N(v_1)\cap (G-V(H))$ in $\sigma_{G-V(H)}$ is at most $t=3\cdot 190$. So by taking $G^*=G-\{v_2,v_3,v_4\}$, $G^{**}=G-V(H)$, and $L(v)=10$ in Lemma~\ref{keylemma}, $\sigma_{G-V(H)}$ can be extended to a recoloring sequence $\sigma_{G-\{v_2,v_3,v_4\}}$ from $\alpha|_{G-\{v_2,v_3,v_4\}}$ to $\beta|_{G-\{v_2,v_3,v_4\}}$ that recolors $v_1$ at most $\big\lceil \frac{3\cdot 190}{10-3-1}\big\rceil +1=96<190$ times. Note that $d_{G-\{v_2,v_4\}}(v_3)=3$ and the total number of recolorings of the vertices of $N(v_3)\cap  (G-\{v_2,v_3,v_4\})$ in $\sigma_{G-\{v_2,v_3,v_4\}}$ is at most $t=2\cdot 190+96$. So by taking $G^*=G-\{v_2,v_4\}$, $G^{**}=G-\{v_2,v_3,v_4\}$, and $L(v)=10$ in Lemma~\ref{keylemma}, $\sigma_{G-\{v_2,v_3,v_4\}}$ can be extended to a recoloring sequence $\sigma_{G-\{v_2,v_4\}}$ from $\alpha|_{G-\{v_2,v_4\}}$ to $\beta|_{G-\{v_2,v_4\}}$ that recolors $v_3$ at most $\big\lceil \frac{2\cdot 190+96}{10-3-1}\big\rceil +1=81<190$ times. Similarly, $\sigma_{G-\{v_2,v_4\}}$ can be extended to a recoloring sequence $\sigma_{G-v_4}$ from $\alpha|_{G-v_4}$ to $\beta|_{G-v_4}$ and then $\sigma_{G-v_4}$ can be extended to a recoloring sequence $\sigma_{G}$ from $\alpha$ to $\beta$  that recolors each of $v_2$ and $v_4$ at most $\big\lceil \frac{3\cdot 190+96+81}{10-5-1}\big\rceil +1=188<190$ times.

Now assume that $v_2v_4$ is the mid-edge of $H$. By induction, there exists a recoloring sequence $\sigma_{G-V(H)}$ from $\alpha|_{G-V(H)}$ to $\beta|_{G-V(H)}$ that recolors every vertex of $G-V(H)$ at most $190$ times. Note that $d_{G-\{v_2,v_3,v_4\}}(v_1)\leq 4$ and the total number of recolorings of the vertices of $N(v_1)\cap (G-V(H))$ in $\sigma_{G-V(H)}$ is at most $t=4\cdot 190$. So by taking $G^*=G-\{v_2,v_3,v_4\}$, $G^{**}=G-V(H)$, and $L(v)=10$ in Lemma~\ref{keylemma}, $\sigma_{G-V(H)}$ can be extended to a recoloring sequence $\sigma_{G-\{v_2,v_3,v_4\}}$ from $\alpha|_{G-\{v_2,v_3,v_4\}}$ to $\beta|_{G-\{v_2,v_3,v_4\}}$ that recolors $v_1$ at most $\big\lceil \frac{4\cdot 190}{10-4-1}\big\rceil +1=153<190$ times. Note that $d_{G-\{v_2,v_4\}}(v_3)= 3$ and the total number of recolorings of the vertices of $N(v_3)\cap (G-\{v_2,v_3,v_4\})$ in $\sigma_{G-\{v_2,v_3,v_4\}}$ is at most $t=3\cdot 190$. Now by taking $G^*=G-\{v_2,v_4\}$, $G^{**}=G-\{v_2,v_3,v_4\}$, and $L(v)=10$ in Lemma~\ref{keylemma}, $\sigma_{G-\{v_2,v_3,v_4\}}$ can be extended to a recoloring sequence $\sigma_{G-\{v_2,v_4\}}$ from $\alpha|_{G-\{v_2,v_4\}}$ to $\beta|_{G-\{v_2,v_4\}}$ that recolors $v_3$ at most $\big\lceil \frac{3\cdot 190}{10-3-1}\big\rceil +1=96<190$ times. Similarly, $\sigma_{G-\{v_2,v_4\}}$ can be extended to a recoloring sequence $\sigma_{G-v_4}$ from $\alpha|_{G-v_4}$ to $\beta|_{G-v_4}$ that recolors $v_2$ at most $\big\lceil \frac{2\cdot 190+153+96}{10-4-1}\big\rceil +1=127<190$ times. Then $\sigma_{G-v_4}$ can be extended to a recoloring sequence $\sigma_{G}$ from $\alpha$ to $\beta$  that recolors $v_4$ at most $\big\lceil \frac{2\cdot 190+153+127+96}{10-5-1}\big\rceil +1=190$ times.
\end{proof}

\section{Proof of Theorem~\ref{one3facewithone3face}}

%

To prove Theorem~\ref{one3facewithone3face}, we first prove the following structural result for the graphs in the class $\mathcal{G}_2$ by using the discharging method.

\begin{theorem}\label{strone3facewithone3face}
If $G\in \mathcal{G}_2$ and $\delta(G)\geq 4$, then $G$ contains at least one of the following configurations (see Figure~\ref{figure3}).

\begin{figure}[hbtp]
\centering
\includegraphics[scale=0.75]{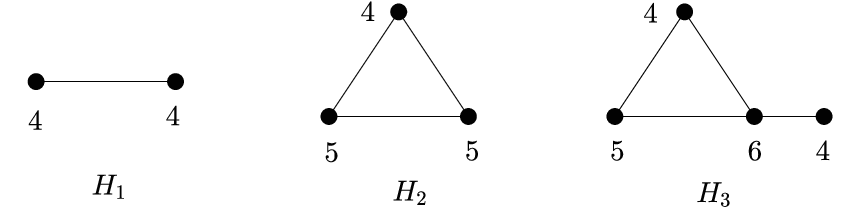}
\caption{The numbers next to the vertices represent their degrees.}\label{figure3}
\end{figure}

\begin{enumerate}
\item[$H_1$:] An edge whose both endpoints have degree $4$.
\item[$H_2$:] A triangle with one vertex of degree $4$ and the other two vertices of degree $5$.
\item[$H_3$:] A triangle with the three vertices having degrees $4$, $5$, and $6$ such that the $6$-vertex of the triangle has a $4$-neighbor that is not a vertex of the triangle. 
\end{enumerate}
\end{theorem}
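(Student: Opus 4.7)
The plan is to argue by contradiction: assume $G\in\mathcal{G}_2$ has $\delta(G)\geq 4$ and avoids all three configurations $H_1$, $H_2$, $H_3$. Using the balanced charging setup of Subsection~\ref{balanced}, I would assign each vertex $v$ charge $d(v)-4$ and each face $f$ charge $d(f)-4$, giving total charge $-8$, and then redistribute charges so that every vertex and face ends with nonnegative charge; this contradiction proves the theorem. A preliminary observation that I would use throughout is that in $\mathcal{G}_2$, the incident $3$-faces at any vertex $v$ occur in runs of length at most $2$ in cyclic order, with each run separated by at least one non-$3$-face: three consecutive $3$-faces at $v$ would make the middle one adjacent to two distinct $3$-faces, violating the defining property of $\mathcal{G}_2$. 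Consequently $v$ is incident to at most $\lfloor 2d(v)/3\rfloor$ $3$-faces.

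The discharging rules I would use are:
\begin{enumerate}
\item[$R_1$:] Every vertex sends charge $1/3$ to every incident $3$-face.
\item[$R_2$:] In each $3$-face of type $(4,5,6^+)$, the $6^+$-vertex sends an additional charge $1/3$ to the $4$-vertex.
\item[$R_3$:] In each $3$-face of type $(4,6^+,6^+)$, each of the two $6^+$-vertices sends an additional charge $1/6$ to the $4$-vertex.
\end{enumerate}
After discharging, every $3$-face receives charge $1$ via $R_1$ and finishes with charge $0$, and every $4^+$-face keeps its nonnegative initial charge. Because $H_1$ is forbidden, every neighbor of a $4$-vertex has degree $\geq 5$; because $H_2$ is forbidden, every $3$-face at a $4$-vertex contains at least one $6^+$-vertex. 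Hence every incident $3$-face of a $4$-vertex is of type $(4,5,6^+)$ or $(4,6^+,6^+)$, and in either case the combined effect of $R_2/R_3$ returns exactly $1/3$ to the $4$-vertex, exactly canceling the $R_1$ loss, so its final charge stays at $0$. A $5$-vertex participates in neither $R_2$ nor $R_3$ and loses at most $3\cdot(1/3)=1$ via $R_1$, matching its initial charge. A $7^+$-vertex $w$ loses at most $2/3$ per incident $3$-face (worst case $R_1+R_2$), so its total loss is at most $(2/3)\lfloor 2d(w)/3\rfloor\leq 4d(w)/9$, which stays below $d(w)-4$ for all $d(w)\geq 7$.

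The main obstacle is the $6$-vertex case. Let $u$ be a $6$-vertex with initial charge $2$. If $u$ lies in no $(4,5,6)$-triangle, then every $3$-face at $u$ containing a $4$-vertex is of type $(4,6^+,6^+)$ and costs only $1/6$ via $R_3$; with at most $4$ incident $3$-faces, the total loss is at most $4\cdot(1/3)+4\cdot(1/6)=2$. If $u$ does lie in some $(4,5,6)$-triangle $uvw$ (with $d(v)=4$, $d(w)=5$), then the prohibition of $H_3$ is crucial: it forces $u$'s \emph{only} $4$-neighbor to be $v$, so every $3$-face at $u$ containing a $4$-vertex contains $v$ and therefore shares edge $uv$, giving at most two such $3$-faces forming a diamond around $uv$. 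A brief case split over the types of these $3$-faces (either two $(4,5,6)$-triangles, or one $(4,5,6)$ and one $(4,6,6^+)$, or just one $(4,5,6)$) bounds the extra $R_2+R_3$ loss by at most $2/3$, keeping $u$'s total loss within its initial $2$. Thus every vertex and face ends with nonnegative charge, contradicting $ch(G)=-8$, and the theorem follows.
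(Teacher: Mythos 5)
Your proposal is correct and is essentially the paper's own proof: your rules $R_1$--$R_3$ induce exactly the same net charge flow as the paper's rules (the paper just routes the charge directly from the $5^+$-vertices to the $3$-face rather than reimbursing the $4$-vertex), and your case analyses for the $4$-, $5$-, $6$-, and $7^+$-vertices, including the use of the forbidden $H_3$ to confine the costly $3$-faces at a $6$-vertex to the two faces on the edge to its unique $4$-neighbor, coincide with the paper's. One small arithmetic point: at $d(w)=7$ the relaxation of $\frac{2}{3}\big\lfloor \frac{2d(w)}{3}\big\rfloor$ to $\frac{4d(w)}{9}$ does not give $\leq d(w)-4$ (since $28/9>3$), so you must keep the floor, which yields $\frac{2}{3}\cdot 4=\frac{8}{3}\leq 3$, exactly as in the paper.
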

\begin{proof}
Let $G\in\mathcal{G}_2$ and $\delta(G)\geq 4$. Consider a plane embedding of $G$ such that for each $3$-face of $G$, there is at most one $3$-face adjacent to it. For the sake of contradiction, assume that $G$ does not contain any of the configurations $H_1, H_2$, and $H_3$. Note that two $3$-faces of $G$ cannot share more than one edge since $\delta(G)\geq 4$. Assign charges to the vertices and the faces of $G$ by using the balanced charging method (see Subsection~\ref{balanced}). Now we re-allocate the charges using the following discharging rules.
\begin{enumerate}
\item[$R_1$:] Every $5$-vertex sends charge $1/3$ to every $3$-face incident to it.
\item[$R_2$:] Let $v$ be a $6^+$-vertex and $f$ be a $3$-face incident to it. If the other two vertices incident to $f$ are a $4$-vertex and a $5$-vertex, then $v$ sends charge $2/3$ to $f$. If the other two vertices incident to $f$ are a $4$-vertex and a $6^+$-vertex, then $v$ sends charge $1/2$ to $f$. If both the other vertices incident to $f$ are $5^+$-vertices, then $v$ sends charge $1/3$ to $f$.  
\end{enumerate}

Now we show that at the end of the discharging process, every vertex and every face has non-negative charge to obtain a contradiction to the fact that $ch(G)=-8$.\medskip

\textbf{$3$-face:} Let $f$ be a $3$-face of $G$. Initially, $f$ had charge $-1$. Since $\delta(G)\geq 4$, all the vertices incident to $f$ are $4^+$-vertices. First assume that all the vertices incident to $f$ are $5^+$-vertices. Then $f$ gets charge $1/3$ from each of its incident vertices by Rules~$R_1$ and $R_2$. Hence $f$ has non-negative charge at the end of the discharging process. Now assume that there is a $4$-vertex $v$ incident to $f$. Let $u$ and $w$ be the other two vertices incident to $f$. Since $G$ does not contain the configuration $H_1$, both $u$ and $w$ are $5^+$-vertices. If both $u$ and $w$ are $6^+$-vertices, then $f$ gets charge $1/2$ from each of $u$ and $w$ by Rule~$R_2$ and hence gets sufficient charge to make its charge non-negative. So assume that at least one of $u$ and $w$ is a $5$-vertex. Without loss of generality, we may assume that $u$ is a $5$-vertex. Now since $G$ does not contain the configuration $H_2$, $w$ is a $6^+$-vertex. So $f$ gets charge $1/3$ from $u$ by Rule~$R_1$ and charge $2/3$ from $w$ by Rule~$R_2$. Hence $f$ has non-negative charge at the end of the discharging process.

\textbf{$4^+$-face:} Let $f$ be a $4^+$ face. Initially, $f$ had non-negative charge. Since $f$ neither gains nor loses any charge during the discharging process, its charge remains the same, that is non-negative.

\textbf{$4$-vertex:} Let $v$ be a $4$-vertex. Initially, $v$ had charge~$0$. Since $v$ neither gains nor loses any charge during the discharging process, its charge remains the same, that is~$0$.

\textbf{$5$-vertex:} Let $v$ be a $5$-vertex. Initially, $v$ had charge $1$. If there are at least four $3$-faces incident to $v$, then there is a $3$-faces incident to $v$ which is adjacent to at least two distinct $3$-faces, a contradiction to the fact that $G\in \mathcal{G}_2$. So there are at most three $3$-faces incident to $v$. Now since $v$ sends charge $1/3$ to each of the incident $3$-faces by Rule~$R_1$, $v$ loses charge at most $3\cdot(1/3)$. Hence $v$ has non-negative charge at the end of the discharging process.

\textbf{$6$-vertex:} Let $v$ be a $6$-vertex. Initially, $v$ had charge $2$. If there are at least five $3$-faces incident to $v$, then there is a $3$-face incident to $v$ which is adjacent to at least two distinct $3$-faces, a contradiction to the fact that $G\in \mathcal{G}_2$. So there are at most four $3$-faces incident to $v$. Since $\delta(G)\geq 4$, the neighbors of $v$ are $4^+$-vertices. If every vertex of every $3$-face incident to $v$ is a $5^+$-vertex, then $v$ sends charge $1/3$ to each of the $3$-faces incident to it by Rule~$R_2$. Thus $v$ loses charge at most $4\cdot(1/3)< 2$. So assume that there is a $4$-vertex incident to a $3$-face that is incident to $v$. Since $\delta(G)\geq 4$ and $G$ does not contain the configuration $H_1$, the neighbors of any $4$-vertex are $5^+$-vertices.

First assume that in every $3$-face incident to $v$ that has an incident $4$-vertex, the other incident vertex is a $6^+$-vertex. Since $v$ loses charge at most $1/2$ to each such $3$-face by Rule~$R_2$, $v$ loses charge at most $4\cdot(1/2)=2$. Next assume that there is a $3$-face $f$ incident to $v$ in which the other two incident vertices are a $4$-vertex, say $u$ and a $5$-vertex, say $w$. Then, since $G$ contains the $3$-face $f$ and does not contain the configuration $H_3$, the neighbors of $v$ other than $u$ are $5^+$-vertices. So $v$ loses charge $1/3$ to each of the incident $3$-faces which are not incident to $u$ by Rule~$R_2$. Note that there are at most two $3$-faces that are incident to the edge $uv$ and may get charge $2/3$ from $v$ by Rule~$R_2$. So $v$ loses charge at most $2\cdot\frac{2}{3}+2\cdot\frac{1}{3}=2$. Hence at the end of the discharging process, $v$ has non-negative charge.

\textbf{$7^+$-vertex:} Let $v$ be a $7^+$-vertex. Initially, $v$ had charge $d(v)-4$. Since for each $3$-face of $G$, there are at most two $3$-faces adjacent to it, the number of $3$-faces incident to $v$ is at most $\big\lfloor \frac{2}{3}\cdot d(v)\big\rfloor$. Since $v$ loses charge at most $2/3$ to each incident $3$-face by Rule~$R_2$, it loses charge at most $\frac{2}{3}\cdot\big\lfloor \frac{2}{3}\cdot d(v)\big\rfloor$. Since $d(v)\geq 7$, we have $\frac{2}{3}\cdot\big\lfloor \frac{2}{3}\cdot d(v)\big\rfloor\leq d(v)-4$. Hence at the end of the discharging process, $v$ has non-negative charge.
\end{proof}

\begin{proof}[Proof of Theorem~\ref{one3facewithone3face}]
Let $G\in \mathcal{G}_2$ and $\alpha$ and $\beta$ be any two $L$-colorings of $G$. In order to prove that diam$(\mathcal{G}(G,L))\leq 13|V(G)|$, we prove the claim that there exists a recoloring sequence from $\alpha$ to $\beta$ that recolors every vertex of $G$ at most $13$ times. We use induction on the number of vertices of $G$ to prove the claim. If $|V(G)|=1$, then the claim is trivial. So the base case is true. First assume that there is a $3^-$-vertex $v\in V(G)$. By induction, there is a recoloring sequence $\sigma_{G-v}$ from $\alpha|_{G-v}$ to $\beta|_{G-v}$ that recolors every vertex of $G-v$ at most $13$ times. Note that the total number of recolorings of the vertices of $N(v)$ in $\sigma_{G-v}$ is at most $t=d(v)\cdot 13$. So by taking $G^*=G$, $G^{**}=G-v$, and $|L(v)|=9$ in Lemma~\ref{keylemma}, the sequence $\sigma_{G-v}$ can be extended to a recoloring sequence $\sigma_G$ from $\alpha$ to $\beta$ that recolors $v$ at most $\big\lceil \frac{d(v)\cdot 13}{9-d(v)-1}\big\rceil +1\leq 9<13$ times since $d(v)\leq 3$. 
 Now we may assume that $\delta(G)\geq 4$. Then by Theorem~\ref{strone3facewithone3face}, $G$ contains at least one of the configurations $H_1, H_2$, and $H_3$. Now for each configuration, we show that the claim holds.

\textbf{Configuration $H_1$:} Let $uv$ be an edge of $G$ such that $u$ and $v$ are $4$-vertices. By induction, there exists a recoloring sequence $\sigma_{G-\{u,v\}}$ from $\alpha|_{G-\{u,v\}}$ to $\beta|_{G-\{u,v\}}$ that recolors every vertex of $G-\{u,v\}$ at most $13$ times. Note that $d_{G-u}(v)=3$ and the total number of recolorings of the vertices of $N(v)\cap (G-\{u,v\})$ in $\sigma_{G-\{u,v\}}$ is at most $t=3\cdot 13$. So by taking $G^*=G-u$, $G^{**}=G-\{u,v\}$, and $|L(v)|=9$ in Lemma~\ref{keylemma}, $\sigma_{G-\{u,v\}}$ can be extended to a recoloring sequence $\sigma_{G-u}$ from $\alpha|_{G-u}$ to $\beta|_{G-u}$ that recolors $v$ at most $\big\lceil \frac{3\cdot 13}{9-3-1}\big\rceil +1=9$ times. Note that the number of recolorings of the vertices of $N(u)$ in $\sigma_{G-u}$ is at most $t=3\cdot 13 +9$. So by taking $G^*=G$, $G^{**}=G-u$, and $|L(u)|=9$ in Lemma~\ref{keylemma}, $\sigma_{G-u}$ can be extended to a recoloring sequence $\sigma_G$ from $\alpha$ to $\beta$ that recolors $u$ at most $\big\lceil \frac{3\cdot 13+9}{9-4-1}\big\rceil +1=13$ times.

\textbf{Configuration $H_2$:} Let $u,v,$ and $w$ be the vertices of a triangle with degrees $4,5,$ and $5$, respectively. By induction, there exists a recoloring sequence $\sigma_{G-\{u,v,w\}}$ from $\alpha|_{G-\{u,v,w\}}$ to $\beta|_{G-\{u,v,w\}}$ that recolors every vertex of $G-\{u,v,w\}$ at most $13$ times. Note that $d_{G-\{u,w\}}(v)=3$ and the total number of recolorings of the vertices of $N(v)\cap (G-\{u,v,w\})$ is at most $t=3\cdot 13$. So by taking $G^*=G-\{u,w\}$, $G^{**}=G-\{u,v,w\}$, and $|L(v)|=9$ in Lemma~\ref{keylemma}, $\sigma_{G-\{u,v,w\}}$ can be extended to a recoloring sequence $\sigma_{G-\{u,w\}}$ from $\alpha|_{G-\{u,w\}}$ to $\beta|_{G-\{u,w\}}$ that recolors $v$ at most $\big\lceil \frac{3\cdot 13}{9-3-1}\big\rceil +1=9$ times. Similarly, by using Lemma~\ref{keylemma}, $\sigma_{G-\{u,w\}}$ can be extended to a recoloring sequence $\sigma_{G-u}$ and then $\sigma_{G-u}$ can be extended to a recoloring sequence $\sigma_{G}$ from $\alpha$ to $\beta$ that recolors $w$ at most $\big\lceil \frac{3\cdot 13+9}{9-4-1}\big\rceil +1=13$ times and $u$ at most $\big\lceil \frac{2\cdot 13+13+9}{9-4-1}\big\rceil +1=13$ times.

\textbf{Configuration $H_3$:} Let $u,v,w$, and $x$ be the vertices of degree $4,5,6$, and $4$, respectively such that $u,v,$ and $w$ are adjacent to each other and $w$ is adjacent to $x$. By induction, there exists a recoloring sequence $\sigma_{G-\{u,v,w,x\}}$ from $\alpha|_{G-\{u,v,w,x\}}$ to $\beta|_{G-\{u,v,w,x\}}$ that recolors every vertex of $G-\{u,v,w,x\}$ at most $13$ times.
Note that $d_{G-\{u,v,x\}}(w)=3$ and hence the total number of recolorings of the vertices of $N(v)\cap (G-\{u,v,w,x\})$ is at most $t=3\cdot 13$. So by taking $G^*=G-\{u,v,x\}$, $G^{**}=G-\{u,v,w,x\}$, and $|L(w)|=9$ in Lemma~\ref{keylemma}, $\sigma_{G-\{u,v,w,x\}}$ can be extended to a recoloring sequence $\sigma_{G-\{u,v,x\}}$ from $\alpha|_{G-\{u,v,x\}}$ to $\beta|_{G-\{u,v,x\}}$ that recolors $w$ at most $\big\lceil \frac{3\cdot 13}{9-3-1}\big\rceil +1=9$ times. Similarly, by using Lemma~\ref{keylemma}, the recoloring sequence $\sigma_{G-\{u,v,x\}}$ can be extended to $\sigma_{G-\{u,x\}}$, then $\sigma_{G-\{u,x\}}$ can be extended to $\sigma_{G-u}$, and then $\sigma_{G-u}$ can be extended to $\sigma_G$ from $\alpha$ to $\beta$ that recolors each of $u,v,$ and $x$ at most $13$ times. 
\end{proof}

\section{Proof of Theorem~\ref{trianglewith6+faces}}
	
Let $\mathcal{G}=\{G\mid $ $G$ is a planar graph and $G$ has a plane embedding such that the faces adjacent to any $3$-face of $G$ are $5^+$-faces and at most three $3$-faces are adjacent to any $5$-face$\}$. Recall that $\mathcal{G}_3= \{ G\mid G$ is a planar graph and $G$ has a plane embedding such that the faces adjacent to any $3$-face of $G$ are $6^+$-faces\}. Note that $\mathcal{G}_3$ is a subclass of $\mathcal{G}$.

For positive integers $x$ and $y$ and a non-negative integer $p$, we use \emph{$x(p)y$-path} to denote a path on $p+2$ vertices, where the intermediate $p$ vertices are of degree $4$ and the endpoints of the path have degree $x$ and $y$ (see Figure~\ref{figure1}(a)). For a face $f$ of a plane graph $G$, a \emph{facial walk} of $f$ is a closed walk along the boundary of $f$. Note that there may be more than one facial walk of a face $f$. By using the discharging method, we prove the following structural result for the graphs in $\mathcal{G}$.

\begin{figure}[hbtp]
\centering
\includegraphics[scale=0.8]{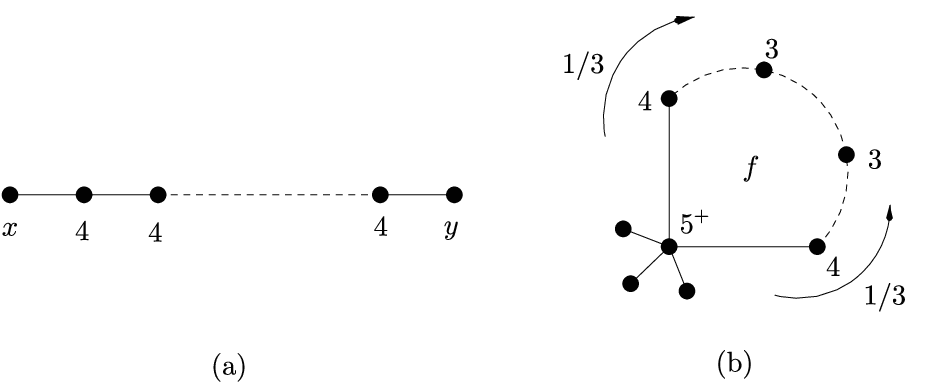}
\caption{The numbers next to the vertices denote their degrees. The graph in (a) is an $x(p)y$-path. In (b), we represent the discharging Rule~$R_1$(c) given in the proof of Theorem~\ref{strtrianglewith5+faces}. Here, $f$ is a $6^-$-face.}\label{figure1}
\end{figure}

\begin{theorem}\label{strtrianglewith5+faces}
If $G\in \mathcal{G}$ and $\delta(G)\geq 3$, then $G$ contains at least one of the following configurations.
\begin{enumerate}
\item[$H_4$:] An edge whose both endpoints are $3$-vertices.

\medskip

\item[$H_5$:] A graph consisting of two paths $P_i=v_1v_2^iv_3^i\ldots v_{p_i+2}^i$ for $i\in \{1,2\}$ such that $P_i$ is a $4(p_i)3$-path with $p_i\leq 3$ for every $i\in \{1,2\}$ and $v_2^1\neq v_2^2$. Note that both the paths share the initial $4$-vertex $v_1$. 

\medskip

\item[$H_6$:] 
A graph consisting of four paths $P_i=v_1v_2^iv_3^i\ldots v_{p_i+2}^i$ for $i\in \{1,2,3,4\}$ such that $P_i$ is a $5(p_i)3$-path with $p_i\leq 4$ for every $i\in \{1,2,3,4\}$ and $v_2^i\neq v_2^j$ for distinct $i,j\in \{1,2,3,4\}$. Note that the paths share the initial vertex $v_1$. Moreover, if any of the $P_i$'s is a $5(4)3$-path, then the $5$-vertex is adjacent to the $3$-vertex of the path making a $5(0)3$-path among the four paths. 
\end{enumerate}
\end{theorem}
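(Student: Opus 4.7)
The plan is to prove the theorem by contradiction using the discharging method, following the same template as the proofs of Theorems~\ref{strtwo3faceswithone3face} and~\ref{strone3facewithone3face}. Suppose $G \in \mathcal{G}$ has $\delta(G) \geq 3$ and avoids all three configurations $H_4$, $H_5$, $H_6$. I would apply the balanced charging scheme of Subsection~\ref{balanced}: set $ch(v) = d(v) - 4$ for every vertex and $ch(f) = d(f) - 4$ for every face, so $ch(G) = -8$. The aim is to redistribute these charges so that every vertex and every face ends with non-negative charge, contradicting this total.

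The only objects carrying negative initial charge are the $3$-vertices (charge $-1$) and the $3$-faces (charge $-1$). The class $\mathcal{G}$ is tailored to handle the $3$-faces: every face adjacent to a $3$-face is a $5^+$-face, and any $5$-face borders at most three $3$-faces. So a natural rule is that every $5^+$-face donates $1/3$ to each adjacent $3$-face, which raises every $3$-face to $0$ while leaving each $5^+$-face with non-negative charge (a $5$-face gives up at most $3\cdot(1/3)=1$, exactly its initial charge, and $6^+$-faces have charge to spare). The $3$-vertices are harder: the absence of $H_4$ forces every neighbor of a $3$-vertex to be a $4^+$-vertex, but a $4$-vertex has initial charge $0$ and cannot donate on its own. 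I would route the deficit of each $3$-vertex along a short chain of $4$-vertices to the first $5^+$-vertex reached, letting each intermediate $4$-vertex serve as a conduit (receiving and passing on the same amount of charge) and letting the $5^+$-vertex at the end pay. The cutoff on chain length will be chosen as $p \leq 3$ when the end is a $4$-vertex and $p \leq 4$ when the end is a $5$-vertex, matching the definitions of $H_5$ and $H_6$.

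With these rules in place, the charge check for $4^-$-vertices (either initial charge $0$ with equal in/out flow, or initial charge $-1$ with incoming supply $1$) and for $6^+$-vertices (initial charge $\geq 2$, comfortable budget) is routine; the case analysis for $4$-vertices and $5$-vertices is the main content and the main obstacle. The absence of $H_5$ forces each $4$-vertex to be the starting endpoint of at most one short $4(\cdot)3$-chain and hence a conduit for at most one such path, so its net contribution is zero. The absence of $H_6$ forces each $5$-vertex to initiate at most three short $5(p)3$-chains with $p \leq 4$, with a delicate boundary case when some chain has length exactly $5$ and its far $3$-vertex is adjacent to the $5$-vertex; these sub-cases need to be enumerated carefully to match the $5$-vertex's initial budget of $1$ against the charge it must send both to $3$-vertices through chains and, possibly, to any $3$-faces incident to it. The subtlest point I expect to handle is committing each $3$-vertex to a single supplying chain (for instance by fixing a canonical shortest path from each $3$-vertex to its closest $5^+$-vertex) so that no $4$- or $5$-vertex is billed twice for the same route, and verifying that the boundary clause in $H_6$ is exactly what is needed to rescue the $5$-vertex from ending negative in the extremal configurations.
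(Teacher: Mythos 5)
Your overall framework (balanced charging, $5^+$-faces paying adjacent $3$-faces, and $5^+$-vertices paying distant $3$-vertices through conduits of $4$-vertices) is the same as the paper's, but your plan is missing the one idea that makes the argument close, and it has an accounting error. The missing idea: in the paper, the ``long distance discharging'' from a $5^+$-vertex to a $3$-vertex is not routed along an arbitrary canonical shortest chain of $4$-vertices --- it is routed \emph{along a facial walk of a $6^-$-face} incident to the relevant edge, and it is precisely the bound $d(f)\leq 6$ on that face that forces $p\leq 3$ (resp.\ $p\leq 4$) in the resulting $4(p)3$- and $5(p)3$-paths, so that avoiding $H_5$ and $H_6$ becomes usable. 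For the complementary situation, the paper's Rules~$R_3$(a) and $R_4$(a) make $7^+$-faces pay incident $3$-vertices directly. Your proposal has faces paying only $3$-faces, never $3$-vertices, and sends charge to ``the first $5^+$-vertex reached'' along an unconstrained chain. Nothing in $H_4$, $H_5$, $H_6$ forbids a $3$-vertex whose nearest $5^+$-vertex lies at the end of a chain of, say, ten $4$-vertices (or whose every maximal chain of $4$-vertices terminates at another $3$-vertex far away); such a configuration defeats your scheme, since the forbidden configurations only control \emph{short} chains. The face-based routing is exactly what rules this out: either the edge at a $3$-vertex lies on a $6^-$-face, in which case Claim~\ref{fi} of the paper produces a short $3(p)x$-path along that face's facial walk (a long one would split into a type-$H_5$ configuration), or it lies only on $7^+$-faces, which pay directly.

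The accounting problem: you propose ``committing each $3$-vertex to a single supplying chain'' from one $5^+$-vertex. If that single chain carries the full deficit $1$, then a $5$-vertex (initial charge $1$) can afford to source only one chain, yet avoiding $H_6$ only excludes \emph{four} $5(p)3$-paths from a common $5$-vertex, so three chains remain possible and the $5$-vertex ends at $-2$. If instead the chain carries $1/3$, the $3$-vertex collects only $1/3$ and stays negative. The paper resolves this by having each $3$-vertex collect $1/3$ separately through each of its three incident edges, from three pairwise non-intersecting vertex/face pairs (a $5^+$-neighbor, a $7^+$-face, or a long-distance discharge along a $6^-$-face), and by having each $5^+$-vertex emit only $1/3$ per incident edge; the $5$-vertex analysis then needs exactly the statement that at most three of its five edges actually lose their $1/3$, which is what forbidding $H_6$ (four paths) delivers. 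You would also need the uniqueness statement of Claim~\ref{one4p3} (at most one $4(p)3$-path per source edge, via $H_5$) to prevent a single source edge from being billed twice, and the cut-edge/repeated-facial-walk case for $3$-vertices, neither of which your plan addresses.
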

\begin{proof}
Let $G\in \mathcal{G}$ and $\delta(G)\geq 3$. Consider a plane embedding of $G$ such that the faces adjacent to any $3$-face of $G$ are $5^+$-faces and at most three $3$-faces are adjacent to any $5$-face. Note that $H_5$ and $H_6$ describe classes of configurations. We call a configuration $H$ to be of \emph{type $H_i$} for $i\in \{5,6\}$ if $H\in H_i$. For the sake of contradiction, assume that $G$ does not contain the configurations $H_4$ and configurations of type $H_5$ and type $H_6$. Since $\delta(G)\geq 3$, two distinct faces cannot share two edges that are consecutive in any facial walks of both the faces. Assign charges to the vertices and the faces of $G$ using the balanced charging method (see Subsection~\ref{balanced}).
Now we re-allocate the charges among the vertices and the faces of $G$ using the following discharging rules. 
\begin{enumerate}
\item[$R_1$:] If $v$ is a $5^+$-vertex, then $v$ sends charge $c=1/3$ through each of its incident edges. Any charge $c$ emerging from $v$ towards an edge $e=vu$ moves with the following rules. 
\begin{enumerate}
	\item $u$ is a $3$-vertex: $c$ gets added to the charge of $u$. 
	\item $u$ is a $5^+$-vertex: $c$ gets added back to the charge of $v$. 
	\item $u$ is a $4$-vertex: If there is no $6^-$-face to which the edge $e$ is incident, then $c$ gets added back to the charge of $v$. Assume that there is a $6^-$face to which the edge $e$ is incident. Note that there are at most two such faces. Along any facial walk of each such $6^-$-face, we will try to find $4(p)3$-paths starting from $u$. If no such $4(p)3$-path exists along any of the facial walks of any of the $6^-$-faces, then $c$ gets added back to the charge of $v$. So assume that there exists such a $4(p)3$-path. By Claim~\ref{one4p3} (stated and proved after the discharging rules), there is only one such $4(p)3$-path, say $P$. Then the charge $c$ gets added to the charge of the last vertex of $P$, that is the $3$-vertex of $P$. Note that the charge $c$ emerges from a $x$-vertex $v$; $x\geq 5$, moves through a $x(p)3$-path $vP$, and gets added to the $3$-vertex of the path. We refer to Figure~\ref{figure1}(b) for a pictorial representation of the Rule~$R_1$(c). We use \emph{long distance discharging} to denote such a discharging by a $x$-vertex $v$; $x\geq 5$, to a $3$-vertex $w$ through a $x(p)3$-path $vv_1v_2\ldots v_kw$ along a facial walk of a $6^-$-face. We call $v$, $w$, $vv_1$, and $v_kw$ as the \emph{source}, \emph{sink}, \emph{source edge}, and \emph{sink edge} of the long distance discharging by $v$, respectively. 
	\end{enumerate}	
\item[$R_2$:] For $5\leq l\leq 6$, if an $l$-face $f$ is adjacent to a $3$-face, then $f$ sends charge $1/3$ to the $3$-face through each common incident edge.
\item[$R_3$:] Every $7$-face $f$ discharges as follows.
\begin{enumerate}
\item If an edge $vu$ is incident to $f$, where $v$ is a $3$-vertex and $u$ is a $4$-vertex, then $f$ sends charge $1/3$ to $v$.

\item If $f$ is adjacent to a $3$-face, then $f$ sends charge $1/3$ to the $3$-face through each common incident edge.
\end{enumerate}

\item[$R_4$:] Every $8^+$-face $f$ discharges as follows.
\begin{enumerate}
\item If a $3$-vertex $v$ is incident to $f$, then $f$ sends charge $1/3$ to $v$. If $f$ contains $v$ several times on its facial walk, then $f$ sends charge $1/3$ to $v$ each time its facial walk visits $v$.
\item If $f$ is adjacent to a $3$-face, then $f$ sends charge $1/3$ to the $3$-face through each common incident edge.
\end{enumerate}

\end{enumerate}

To continue the proof of Theorem~\ref{strtrianglewith5+faces}, we need Claims~\ref{one4p3}-\ref{fi}.

\begin{claim}\label{one4p3}
If $vu$ is an edge, where $v$ is a $5^+$-vertex and $u$ is a $4$-vertex, then there is at most one $4(p)3$-path that has $u$ as the initial vertex and lies along any facial walk of any $6^-$-face incident to $vu$. 
\end{claim}
\begin{proof}[Proof of Claim~\ref{one4p3}]
Let $vu$ be an edge, where $v$ is a $5^+$-vertex and $u$ is a $4$-vertex. Assume that there are at least two $4(p)3$-paths that have $u$ as the initial vertex and lie along some (possibly different) facial walks of the (possibly different) $6^-$-face(s) incident to $vu$. Let two such paths be $P_1$, a $4(p_1)3$-path and $P_2$, a $4(p_2)3$-path for some non-negative integers $p_1$ and $p_2$. Note that $P_1$ and $P_2$ may share some vertices other than $u$. Further note that $P_1$ and $P_2$ may lie on different facial walks of a single $6^-$-face incident to $vu$ or $P_1$ and $P_2$ may lie on different facial walks of distinct $6^-$-faces incident to $vu$. Since $P_1$ and $P_2$ lie on the facial walks of $6^-$-face(s) and these paths cannot contain the $5^+$-vertex $v$ which is also on the facial walks of the $6^-$-face(s), we have $p_1\leq 3$ and $p_2\leq 3$. Then $P_1\cup P_2$ contains a configuration of type $H_5$, a contradiction. So there is only one such $4(p)3$-path. 
\end{proof}

\begin{claim}\label{obs1}
If a source and a source edge of long distance discharging is fixed, then the sink and the sink edge are unique. There may be two sources for a fixed sink and sink edge.
\end{claim}

\begin{proof}[Proof of Claim~\ref{obs1}]
Let $v$ and $vv_1$ be the source and the source edge of long distance discharging. By Claim~\ref{one4p3}, there is a unique $4(p)3$-path starting from $v_1$ along which long distance discharging happens. Therefore, there is a unique sink and a unique sink edge once the source and the source edge are fixed. 
Now let $w$ and $wy$ be the sink and the sink edge of some long distance discharging. It is possible that there are two distinct $6^-$-faces $f_1$ and $f_2$ incident to $wy$. Let $z_1$ and $z_2$ be a $x_1$-vertex and a $x_2$-vertex on $f_1$ and $f_2$, respectively, where $x_1,x_2\geq 5$. Let there be a $x_1(p_1)3$-path $P_1$ and a $x_2(p_2)3$-path $P_2$ from $z_1$ to $w$ and $z_2$ to $w$ in some facial walks of $f_1$ and $f_2$, respectively 
such that both the paths contain the sink edge $wy$. Note that since $f_1$ and $f_2$ are $6^-$-faces, by Rule~$R_1$(c), $z_1$ sends charge $1/3$ to $w$ and $z_2$ sends charge $1/3$ to $w$ through the paths $P_1$ and $P_2$, respectively. So we may conclude that there may be two sources for a fixed sink and sink edge. 
\end{proof}

\begin{claim}\label{fi}
If there is a $6^-$-face incident to an edge $vu$, where $v$ is a $3$-vertex and $u$ is a $4$-vertex, then $v$ gets charge at least $1/3$ through the edge $vu$ by long distance discharging.
\end{claim}
\begin{proof}[Proof of Claim~\ref{fi}]
Let $f$ be a $6^-$-face incident to an edge $vu$, where $v$ is a $3$-vertex and $u$ is a $4$-vertex. By taking the direction of traversal from $v$ to $u$, we search for a $3(p)x$-path along a facial walk of $f$ such that the initial vertex of the path is the $3$-vertex $v$ and $x\geq 5$. If such a path does not exist, then along that facial walk of $f$, there exists a $3(p)3$-path $P$ for some non-negative integer $p$ or a cycle $C$ such that exactly one of its vertices is a $3$-vertex and the remaining all others are $4$-vertices. Note that we can find a suitable $4$-vertex, say $v^*$ on $P$ and $C$ such that $P$ and $C$ can be viewed as the union of two paths $P_1$ and $P_2$, where $P_i$ is a $4(p_i)3$-path with $p_i\leq 3$ for $i\in\{1,2\}$, $P_1$ and $P_2$ have $v^*$ as the initial $4$-vertex, and the neighbors of $v^*$ in $P_1$ and $P_2$ are different. So $P$ and $C$ are configurations of type $H_5$, a contradiction. Hence there exists a $3(p)x$-path $P$ along a facial walk of $f$ such that the initial vertex of the path is the $3$-vertex $v$ and $x\geq 5$. Let $w$ be the $x$-vertex of $P$ and $w'$ be the vertex adjacent to $w$ in $P$. Now since $w$ is a $5^+$-vertex and $w'$ is a $4$-vertex, by Rule~$R_1$(c) for the vertex $w$, there will be a long distance discharging with $w$ and $ww'$ as the source and the source edge and $v$ and $uv$ as the sink and the sink edge. 
\end{proof}

We now return to the proof of Theorem~\ref{strtrianglewith5+faces}. We show that at the end of the discharging process, every vertex and every face has non-negative charge and obtain a contradiction to the fact that $ch(G)=-8$.\medskip

\textbf{$3$-vertex:} 
Let $v$ be a 3-vertex of $G$ and $v_1,v_2$, and $v_3$ be its neighbors. Initially, $v$ had charge $-1$. Since $G$ does not contain the configuration $H_4$ and $\delta(G)\geq 3$, $v_i$ is a $4^+$-vertex for every $i\in \{1,2,3\}$. For every $i\in \{1,2,3\}$, let $f_i$ be the face incident to the edge $vv_i$ such that $f_i$ appears next to the edge $vv_i$ when we move clockwise around $v$. Note that $f_i$'s may not be distinct. We show that $v$ gets sufficient charge from its neighbors, its incident faces, and some $5^+$-vertices (by long distance discharging) to make its charge non-negative.

First assume that $f_i$ is a distinct face for every $i\in\{1,2,3\}$. Consider the pairs $\{v_i,f_i\}$ for $i\in \{1,2,3\}$. Note that the pairs are pairwise non-intersecting. Now we show that due to each pair $\{v_i,f_i\}$; $i\in \{1,2,3\}$, $v$ gets charge $1/3$. This happens as follows.  If $v_i$ is a $5^+$-vertex, then $v_i$ sends charge $1/3$ to $v$ by Rule~$R_1$(a). So assume that $v_i$ is a $4$-vertex. If $f_i$ is a $7^+$-face, then, since $v_i$ is a $4$-vertex, $v$ gets charge $1/3$ from $f_i$ by Rule~$R_3$(a) or $R_4$(a). So assume that $f_i$ is a $6^-$-face. Then by Claim~\ref{fi}, $v$ gets charge at least $1/3$ through the edge $vv_i$ by long distance discharging from some source. Since there are three non-intersecting pairs $\{v_i,f_i\}$; $i\in \{1,2,3\}$, $v$ gets charge $1/3$ at least three times. So in total $v$ gets charge at least $3\cdot (1/3)$ and hence $v$ has non-negative charge at the end of the discharging process. 
 
Now assume that at least two faces among $f_1$, $f_2$, and $f_3$ are same. Without loss of generality, assume that $f_1=f_2$. Recall that all the neighbors of $v$ are $4^+$-vertices. Since $f_1=f_2$, by the way the faces $f_1$ and $f_2$ are defined, the edge $vv_2$ has to be a cut edge and gets counted twice on any facial walk of $f_1$. Note that any facial walk of $f_1$ would look like $vv_2u'\ldots w'v_2vv_3\ldots v_1v$, where $u'$ and $w'$ are distinct neighbors of $v_2$ different from $v$. Hence it is easy to verify that $f_1$ is a $8^+$-face.Now we consider two cases depending on $f_1=f_3$ or $f_1\neq f_3$. First assume that $f_1=f_3$ (all three faces are same). Then $vv_1$, $vv_2$, and $vv_3$ are cut edges and hence $f_1$ contains $v$ thrice on any of its facial walk. So $v$ gets charge $3\cdot (1/3)$ from $f_1$ by Rule~$R_4$(a). Hence $v$ gets sufficient charge to make its charge non-negative. Now assume that $f_1\neq f_3$. Then $f_1$ contains $v$ twice on any of its facial walk. Hence $v$ gets charge $2\cdot (1/3)$ from $f_1$ by Rule~$R_4$(a). We now show that $v$ gets another $1/3$ charge due to the pair $\{v_3,f_3\}$. If $v_3$ is a $5^+$-vertex, then $v$ gets charge $1/3$ from $v_3$ by Rule~$R_1$(a). So $v_3$ is a $4$-vertex. Now if $f_3$ is $7^+$-face,  then, since $v_3$ is a $4$-vertex, $v$ gets charge $1/3$ from $f_3$ by Rule~$R_3$(a) or $R_4$(a). So assume that $f_3$ is a $6^-$-face. Then by Claim~\ref{fi}, $v$ gets charge $1/3$ through the edge $vv_3$ by long distance discharging. Hence $v$ has non-negative charge at the end of the discharging process.

\textbf{$4$-vertex:} Initially, every $4$-vertex had charge $0$. Since any $4$-vertex neither gains nor loses any charge during the discharging process, its charge remains the same, that is $0$.

\textbf{$5$-vertex:} Let $v$ be a $5$-vertex. Initially, $v$ had charge $1$. So $v$ can afford to lose charge at most $1$ during the discharging process. Recall that through each edge $e$ incident to $v$, $v$ loses charge either $0$ or $1/3$ by Rule~$R_1$. We show that $v$ loses charge $1/3$ through at most three of its incident edges implying that $v$ has non-negative charge at the end of the discharging process. In particular, we show that if $v$ loses charge $1/3$ through at least four of its incident edges, then a configuration of type $H_5$ is obtained, a contradiction. For this purpose, it is sufficient to show that if $v$ loses charge $1/3$ through an incident edge $vu$, then a $5(p)3$-path starting with the edge $vu$ is obtained with $p\leq 4$. Moreover, if $p=4$ for any such path, then we have to show that $v$ is adjacent to the last vertex of the $5(p)3$-path. 

Let $vu$ be an edge and $v$ loses charge $1/3$ through the edge $vu$. Note that $v$ does not lose any charge through $vu$ if $u$ is a $5^+$-vertex by Rule~$R_1$(b). So $u$ is a $3$-vertex or a $4$-vertex since $\delta(G)\geq 3$. First assume that $u$ is a $3$-vertex. Then the charge $1/3$ gets added to the charge of $u$ by Rule~$R_1(a)$ and $vu$ is a $5(0)3$-path. Now assume that $u$ is a $4$-vertex. Then $v$ loses the charge by Rule~$R_1$(c), that is by long distance discharging. Since the source $v$ and the source edge $vu$ are fixed, by Claim~\ref{obs1}, there is a unique sink and a unique sink edge. The sink is a $3$-vertex $w$ that is incident to a $6^-$-face $f$ which is also incident to $v$. Since this long distance discharging requires a $5(p)3$-path $P$ from $v$ to $w$ along a facial walk of $f$ that is a $6^-$-face, $p\leq 4$. Moreover, if $p=4$, then $f$ has to be a $6$-face and $Pv$ has to be a facial walk of $f$; thus $v$ is adjacent to the $3$-vertex of $P$. So we get the desired $5(p)3$-path. 

\textbf{$6^+$-vertex:} Let $v$ be a $6^+$-vertex. Initially, $v$ had charge $d(v)-4$. Since any $6^+$-vertex loses charge $1/3$ along each of the edges incident to it by Rule~$R_1$, $v$ loses charge at most $(1/3)\cdot d(v)$. Since $d(v)\geq 6$, we have $d(v)-4-d(v)/3\geq 0$. Hence $v$ has non-negative charge at the end of the discharging process.

\textbf{$4$-face:} Initially, every $4$-face has charge $0$. Since any $4$-face neither gains nor loses any charge during the discharging process, its charge remains the same, that is 0.

\textbf{$k$-face for $k\in\{5,6\}$:} Let $f$ be a $5$-face. Initially, $f$ had charge $1$. Since $G\in \mathcal{G}$, there are at most three $3$-faces adjacent to $f$. Note that $f$ sends charge $1/3$ to each of its adjacent $3$-faces through the common incident edges by Rule~$R_2$. Since $\delta(G)\geq 3$ and $f$ is a $5$-face, any $3$-face cannot share more than one edge with $f$. This implies that $f$ loses charge at most $3\cdot(1/3)=1$ and hence $f$ has non-negative charge at the end of the discharging process. Now let $f$ be a $6$-face. Initially, $f$ had charge $2$. Since $f$ sends charge $1/3$ to each of its adjacent $3$-faces through the common incident edges by Rule~$R_2$, $f$ loses charge at most $6\cdot(1/3)=2$. Hence $f$ has non-negative charge at the end of the discharging process.

\textbf{$7$-face:} Let $f$ be a $7$-face. Initially, $f$ had charge $3$. Since $f$ sends charge $1/3$ to each of its adjacent $3$-faces through the common incident edges by Rule~$R_3$(b), $f$ loses charge at most $7\cdot(1/3)$ to the $3$-faces adjacent to it. Let $S_G(f)=\{v\in V(G)\mid v$ is a $3$-vertex and $v$ has a $4$-neighbor $u$ such that $vu$ is incident to $f\}$. We claim that $|S_G(f)|\leq 2$.  If $|S_G(f)|\geq 3$, then let $v_i\in S_G(f)$ for $i\in \{1,2,3\}$. Then for each $i\in \{1,2,3\}$, $v_i$ has a $4$-neighbor $u_i$ such that $v_iu_i$ is incident to $f$. Since $G$ does not contain the configuration $H_4$, $v_i$ is not adjacent to $v_j$ for any $i,j\in \{1,2,3\}$. Since $f$ is a $7$-face, without loss of generality, we may assume that $v_1v_1'v_2v_2'v_3v_4v_5v_1$ is a facial walk of $f$ for some vertices $v_1',v_2',v_4,$ and $v_5$. Note that $u_2=v_1'$ or $u_2=v_2'$. Then $\{v_1,v_1',v_2\}$ or $\{v_2,v_2',v_3\}$ induces a configuration of type $H_5$, a contradiction. 
So $|S_G(f)|\leq 2$. Recall that $f$ sends charge $1/3$ to each vertex of $S_G(f)$ by Rule~$R_3$(a). So $f$ loses charge at most $2\cdot (1/3)$ to the vertices of $S_G(f)$. This implies that in total $f$ loses charge at most $9\cdot(1/3)=3$ and hence $f$ has non-negative charge at the end of the discharging process.

\textbf{$8^+$-face:} Let $f$ be a $8^+$-face. Initially, $f$ had charge $d(f)-4$. Since $f$ sends charge $1/3$ to each of its adjacent $3$-faces through the common incident edges by Rule~$R_4$(b), $f$ loses charge at most $\frac{1}{3}\cdot d(f)$ to the $3$-faces adjacent to it. Since $G$ does not contain the configuration $H_4$, there are at most $\big\lfloor\frac{d(f)}{2}\big\rfloor$ $3$-vertices incident to $f$. So $v$ loses charge at most $\frac{1}{3}\cdot\big\lfloor \frac{d(f)}{2} \big\rfloor$ to the $3$-vertices incident to it by Rule~$R_4$(a). Hence in total $f$ loses charge at most $\frac{1}{3}\cdot(d(f)+\big\lfloor \frac{d(f)}{3} \big\rfloor)$. Since $d(f)\geq 8$, we have $\frac{1}{3}\cdot (d(f)+\big\lfloor \frac{d(f)}{3} \big\rfloor)\leq d(f)-4$. So $f$ has non-negative charge at the end of the discharging process.
\end{proof}

In the following observations, we describe the structure of the minimum order configuration of type $H_5$ and type $H_6$ in a graph.

\begin{obs}\label{obsH5}
Let a graph $G$ contain the configuration of type $H_5$ and $H^*$ be the minimum order subgraph of $G$ of type $H_5$. Let $P_i=v_1v_2^i\ldots v_{p_i+2}^i$; $i\in \{1,2\}$ be the $4(p_i)3$-path such that $H^*=P_1\cup P_2$, and $|V(P_1)|\geq |V(P_2)|$. Then one of the following holds.
\begin{enumerate}
\item $P_1$ and $P_2$ do not share any vertex other than the initial $4$-vertex $v_1$.
\item If $P_1$ and $P_2$ share a vertex $w$ other than the initial $4$-vertex $v_1$, then there is a $4(p_1^*)3$-path $P_1^*$ for some $p_1^*<p_1\leq 3$ in $G-w$ which is a subpath of $P_1$ starting with $v_1$ such that $P_1^*$ and $P_2$ do not share any vertex other than initial $4$-vertex $v_1$ and $V(H^{*})=V(P_1^*\cup P_2)$.
\end{enumerate}\end{obs}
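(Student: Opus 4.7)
I would argue by contradiction using the minimality of $H^*$. The argument splits naturally into classifying the shared vertex and then constructing $P_1^*$ explicitly.

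First I would classify the shared vertex $w$: since every internal vertex of a $4(p)3$-path has degree $4$ in $G$ while the non-$v_1$ endpoint has degree $3$, any vertex lying on both $P_1$ and $P_2$ must either be internal to both paths or be the $3$-endpoint of both paths; the mixed possibility is impossible because $w$ cannot simultaneously have degrees $3$ and $4$.

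Next I would rule out shared internal vertices by a splicing argument. Suppose $w = v_j^1 = v_k^2$ is internal to both. Splicing the $v_1$-to-$w$ segment of $P_2$ with the $w$-to-$v_{p_1+2}^1$ segment of $P_1$ yields a path $\tilde{P}_1$ whose internal vertices are all internal vertices of either $P_1$ or $P_2$, hence $4$-vertices, and whose endpoints are $v_1$ and the $3$-endpoint of $P_1$; thus $\tilde{P}_1$ is a $4(\tilde{p})3$-path. Its second vertex is $v_2^2 \ne v_2^1$, so $\{\tilde{P}_1, P_1\}$ is again a configuration of type $H_5$. By choosing $w$ to be the shared vertex closest to $v_1$ along $P_2$ one controls $\tilde{p} \le 3$; a direct count then shows $|V(\tilde{P}_1 \cup P_1)| < |V(H^*)|$, contradicting minimality. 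Therefore $w$ must be the common $3$-endpoint $v_{p_1+2}^1 = v_{p_2+2}^2$.

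With that in hand, I would construct $P_1^*$ as the largest prefix of $P_1$ which avoids $w$ and whose complement in $P_1$ is contained in $V(P_2)$. Letting $j^*$ be the smallest index such that $\{v_{j^*+1}^1, \ldots, v_{p_1+1}^1, w\} \subseteq V(P_2)$, the prefix $P_1^* = v_1 v_2^1 \cdots v_{j^*}^1$ satisfies $V(P_1^*) \cup V(P_2) = V(H^*)$ automatically; the property $V(P_1^*) \cap V(P_2) = \{v_1\}$ follows from the elimination of internal sharing in the previous step. To upgrade this prefix to an honest $4(p_1^*)3$-path with $p_1^* < p_1$, I would show that the terminal vertex $v_{j^*}^1$ must in fact have degree $3$ in $G$; otherwise iterating the splice construction from the second step would again yield a strictly smaller $H_5$, violating minimality.

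The main obstacle I expect is this last step: all intermediate vertices of $P_1$ are \emph{a priori} $4$-vertices by the definition of a $4(p_1)3$-path, so only a careful use of minimality---iterating the splice until no further reduction is possible---can force the terminal vertex of the chosen prefix to be a $3$-vertex of $G$. Getting the bookkeeping right for both the vertex-set equality $V(H^*) = V(P_1^* \cup P_2)$ and the strict inequality $p_1^* < p_1$ under the constraint $p_1 \le 3$ is where the bulk of the technical work lies.
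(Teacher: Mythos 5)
There is a genuine gap, and it stems from a misreading of what the observation asserts. The path $P_1^*$ is required to be a $4(p_1^*)3$-path \emph{in $G-w$}, not in $G$. In the paper's construction $w=v_i^1$ is the shared vertex of least index along $P_1$, and $P_1^*=v_1v_2^1\ldots v_{i-1}^1$ ends at the predecessor of $w$ on $P_1$. That terminal vertex has degree $4$ in $G$ (it is an internal vertex of $P_1$), but it is adjacent to $w$, so it has degree $3$ in $G-w$; the remaining vertices of $P_1^*$ keep their degrees because the path $P_1^*w$ is chordless by minimality of $H^*$. The ``main obstacle'' you identify --- forcing the terminal vertex of your prefix to have degree $3$ in $G$ --- therefore does not exist in the intended argument, and the fix you propose cannot work: no amount of iterated splicing will turn an internal $4$-vertex of $P_1$ into a $3$-vertex of $G$. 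Moreover your prefix ends at the smallest $j^*$ with the suffix contained in $V(P_2)$, and that vertex need not even be adjacent to $w$, so it need not have degree $3$ in $G-w$ either.

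Your second step is also not salvageable: shared internal vertices cannot be ruled out. Minimality only forces the two tails after the first shared vertex to coincide; for example $P_1=v_1ab c$ and $P_2=v_1a'bc$ with $b$ a $4$-vertex shared internally by both paths can perfectly well be the minimum-order configuration in a suitable $G$. In that situation your spliced path $\tilde P_1$ satisfies $V(\tilde P_1\cup P_1)=V(H^*)$, so the ``direct count'' yields no strict decrease and no contradiction; in addition $\tilde P_1$ need not be a path (its two segments may intersect) and its number of internal vertices need not be at most $3$. The paper instead argues: by minimality every vertex of $P_1$ and $P_2$ occurring after the first shared vertex $w$ is shared (which is exactly what gives $V(H^*)=V(P_1^*\cup P_2)$), the index of $w$ on $P_1$ is at least $3$ because $|V(P_1)|\geq|V(P_2)|$, and then the prefix up to $v_{i-1}^1$ does the job in $G-w$ as described above.
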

\begin{proof}
 If $P_1$ and $P_2$ do not share any vertex other than the initial $4$-vertex $v_1$, then we are done. So assume that $P_1$ and $P_2$ share a vertex $w$, say $v_i^1$ other than the initial $4$-vertex $v_1$. 
 Then all the vertices appearing after $w$ in the paths $P_1$ and $P_2$ are shared. Otherwise, we can remove some non-shared vertices that appears after $w$ in the path $P_1$ or $P_2$ to get a configuration of type $H_5$ with less order. We now show that $i\geq 3$. Assume that $i=2$. By the definition of $H_5$, $v_2^1\neq v_2^2$. So $w=v_2^1=v_j^2$ for some $j\geq 3$. Then, since the vertices that appears after $w$ in the paths $P_1$ and $P_2$ are shared, $|V(P_2)|\geq |V(P_1)|$, a contradiction. So we have $i\geq 3$. 
  Let $P_1^*=v_1\ldots v_{i-1}^1$. Note that the path $P_1^*w$ is chordless since $H^*$ is of minimum order. Then $P_1^*$ is a $4(p_1^*)3$-subpath of $P_1$ for some $p_1^*<p_1\leq 3$ in $G-w$.  Clearly, $P_1^*$ and $P_2$ do not share any vertex other than initial $4$-vertex $v_1$ and $V(H^{*})=V(P_1^*\cup P_2)$.
\end{proof}

\begin{obs}\label{obsH6}
Let a graph $G$ contain the configuration of type $H_6$ and $H^{**}$ be the minimum order subgraph of $G$ of type $H_6$. Let $P_i=v_1v_2^i\ldots v_{p_i+2}^i$; $i\in \{1,2,3,4\}$ be the $5(p_i)3$-path such that $H^{**}=P_1\cup P_2\cup P_3\cup P_4$ and $|V(P_1)|\geq |V(P_2)|\geq |V(P_3)|\geq |V(P_4)|$. Then for every $i\in \{1,2,3\}$, one of the following holds.
\begin{enumerate}
\item $P_i$ and $P_{i+1}\cup \ldots \cup P_4$ do not share any vertex other than the initial $5$-vertex $v_1$.
\item If $P_i$ and $P_{i+1}\cup \ldots \cup P_4$ share a vertex $w$ other than the initial $5$-vertex $v_1$, then there is a $5(p_i^*)3$-path $P_i^*$ for some $p_i^*<p_i\leq 4$ in $G-w$ which is a subpath of $P_i$ starting with $v_1$ such that $P_i^*$ does not share any vertex with $P_{i+1}\cup \ldots \cup P_4$ and $V(H^{**})=V(P_i^*)\cup V((P_1\cup\ldots\cup P_4)-P_i)$
\end{enumerate}

\end{obs}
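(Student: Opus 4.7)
My plan is to imitate the proof of Observation~\ref{obsH5}, adapted to the four-path setting of $H_6$. Fix $i \in \{1,2,3\}$. If $V(P_i) \cap V(P_{i+1}\cup\cdots\cup P_4) = \{v_1\}$, case~(1) holds, so assume some $w \neq v_1$ lies in this intersection. I pick $w$ to minimize its position along $P_i$; write $w = v_a^i$ with $a \geq 2$, and $w = v_b^j$ for some $j > i$ with $b \geq 2$.

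The crux is the \emph{tail sub-claim} that every vertex of $\{v_a^i,\ldots,v_{p_i+2}^i\}$ lies in $V(\bigcup_{k \neq i} P_k)$. I would prove this by a rerouting argument: replace $P_i$ with
\[
P_i^{\mathrm{new}} = v_1 v_2^i \cdots v_a^i v_{b+1}^j \cdots v_{p_j+2}^j,
\]
which preserves the second vertex $v_2^i$, ends at the $3$-vertex $v_{p_j+2}^j$, and has only $4$-vertices in between. If any vertex of $\{v_{a+1}^i,\ldots,v_{p_i+2}^i\}$ were missing from $\bigcup_{k \neq i} V(P_k)$, then $P_i^{\mathrm{new}} \cup \bigcup_{k \neq i} P_k$ would be a configuration of type $H_6$ with strictly fewer vertices than $H^{**}$, contradicting the minimality of $H^{**}$.

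I then argue $a \geq 3$ in the same spirit as Observation~\ref{obsH5}. Supposing $a = 2$, the condition $v_2^i \neq v_2^j$ forces $b \geq 3$; a similar shortcut rerouting replacing $P_i$ by $v_1 v_b^j v_{b+1}^j \cdots v_{p_j+2}^j$ (which is a $5(p_j+2-b)3$-path, with $p_j+2-b \leq p_j-1 \leq 3$) pins the tail of $P_i$ inside $P_j$ as a suffix, yielding $p_i \leq p_j - 1$ and hence $|V(P_i)| < |V(P_j)|$, contradicting $|V(P_i)| \geq |V(P_j)|$ since $i < j$. With $a \geq 3$ in hand, I take $P_i^* = v_1 v_2^i \cdots v_{a-1}^i$; since $v_{a-1}^i$ is a $4$-vertex in $G$ adjacent to $w$, it has degree $3$ in $G - w$, so $P_i^*$ is a $5(a-3)3$-path in $G - w$ with $p_i^* \leq p_i - 1 < p_i$. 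The minimal choice of $a$ ensures $V(P_i^*) \cap V(P_{i+1}\cup\cdots\cup P_4) = \{v_1\}$, and the sub-claim gives $V(H^{**}) = V(P_i^*) \cup V((P_1\cup\cdots\cup P_4)-P_i)$.

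The main obstacle is verifying that $P_i^{\mathrm{new}}$ really qualifies as a path in a type-$H_6$ configuration, which requires $a + p_j - b \leq 4$. This is immediate when $a \leq b$ (since $a + p_j - b \leq p_j \leq 4$); when $a > b$ I would instead reroute $P_j$ through the tail of $P_i$ (valid because $b + p_i - a \leq p_i \leq 4$) to obtain the symmetric sub-claim for $P_j$, and then translate this back to the desired sub-claim for $P_i$ by an exchange-of-roles argument. The auxiliary $5(0)3$-edge that the definition of $H_6$ attaches to any $5(4)3$-path is among the untouched $P_k$'s, so it is automatically preserved under these replacements and contributes no further obstruction.
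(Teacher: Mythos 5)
Your skeleton is the same as the paper's: pick the first vertex $w=v_a^i$ of $P_i$ shared with a later path $P_j$, use minimality of $H^{**}$ to argue that everything on $P_i$ beyond $w$ already lies in the other paths, rule out $a=2$ via $v_2^i\neq v_2^j$ together with $|V(P_i)|\geq|V(P_j)|$, and take $P_i^*=v_1\cdots v_{a-1}^i$, whose last vertex becomes a $3$-vertex in $G-w$. The paper compresses the first step into one sentence (``we can remove some non-shared vertices that appear after $w$ in $P_i$ or $P_j$''), whereas you make the rerouting explicit via $P_i^{\mathrm{new}}=v_1v_2^i\cdots v_a^iv_{b+1}^j\cdots v_{p_j+2}^j$ and correctly observe that this surgery is only legitimate when the new path still has at most four internal $4$-vertices, i.e.\ $a+p_j-b\leq 4$. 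Flagging that constraint, and checking the $5(0)3$ side condition when the rerouted path has length exactly $4$, is a genuine refinement over the paper's justification.

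However, your treatment of the case $a>b$ does not close the argument. Rerouting $P_j$ through the tail of $P_i$ (which is indeed length-legal since $b+p_i-a<p_i\leq 4$) shows, via minimality, that the vertices of $P_j$ \emph{after} $w$ lie in $\bigcup_{k\neq j}V(P_k)$. But conclusion (b) requires the complementary containment: every vertex of $\{v_{a+1}^i,\ldots,v_{p_i+2}^i\}$ must lie in $\bigcup_{k\neq i}V(P_k)$, since these are exactly the vertices discarded when $P_i$ is replaced by $P_i^*$ in the identity $V(H^{**})=V(P_i^*)\cup V((P_1\cup\cdots\cup P_4)-P_i)$. The ``exchange-of-roles argument'' you invoke to pass from the statement about $P_j$'s tail to the one about $P_i$'s tail is not supplied, and it is not clear it exists: for instance with $a=4$, $b=2$, $p_j=3$ the only shortcut for $P_i$ inside $H^{**}$ runs through $P_j$'s tail and has five internal $4$-vertices, so no admissible rerouting of $P_i$ is available, and minimality then gives no handle on whether $v_{a+1}^i,\ldots,v_{p_i+2}^i$ are covered (the $5(0)3$ clause rescues only the terminal $3$-vertex, and only when $p_i=4$). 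Until you either produce the translation or find a different surgery that deletes an uncovered vertex of $P_i$'s tail, case (b) is unproven exactly when $a>b$ and $a+p_j-b>4$. (To be fair, the paper's own one-line minimality appeal does not address this case either; but as written your proposal does not fill the hole it correctly identifies.)
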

\begin{proof}
Fix $i\in \{1,2,3\}$. If $P_i$ and $P_{i+1}\cup \ldots \cup P_4$ do not share any vertex other than the initial $5$-vertex $v_1$, then we are done. So assume that $P_i$ and $P_{i+1}\cup \ldots \cup P_4$ share a vertex $w$, say $v_k^i$ with least index $k$ other than $1$. Let $P_j$ be the path for some $j\in \{i+1,\ldots,4\}$ that contains $w$. 
Then all the vertices appearing after $w$ in $P_i$ and $P_j$ are shared. Otherwise, we can remove some non-shared vertices that appears after $w$ in the path $P_i$ or $P_j$ to get a configuration of type $H_6$ with less order. We now show that $k\geq 3$. Assume that $k=2$. By the definition of $H_6$, $v_2^i\neq v_2^j$. So $w=v_2^i=v_l^j$ for some $l\geq 3$. Then, since the vertices that appears after $w$ in $P_i$ and $P_j$ are shared, $|V(P_j)|\geq |V(P_i)|$, a contradiction. So we have $k\geq 3$. Let $P_i^*=v_1v_2^i\ldots v_{k-1}^i$. Note that the path $P_i^*w$ is chordless since $H^{**}$ is or minimum order. Then $P_i^*$ is a $5(p_i^*)3$-subpath of $P_i$ for some $p_i^*<p_i\leq 4$ in $G-w$. Clearly, $P_i^*$ and $P_{i+1}\cup\ldots \cup P_4$ do not share any vertex other than the initial $5$-vertex $v_1$ and $V(H^{**})=V(P_i^*)\cup V((P_1\cup\ldots\cup P_4)-P_i)$. 
\end{proof}

Recall that $\mathcal{G}_3$ is a subclass of $\mathcal{G}$. To prove Theorem~\ref{trianglewith6+faces}, we prove the following stronger result.

\begin{theorem}\label{trianglewith5+faces}
If $G\in \mathcal{G}$, then diam$(\mathcal{G}(G,L))\leq 242|V(G)|$.
\end{theorem}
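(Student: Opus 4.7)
The plan is to prove Theorem~\ref{trianglewith5+faces} by induction on $|V(G)|$, showing that for any two $L$-colorings $\alpha$ and $\beta$ of $G$ there exists a recoloring sequence from $\alpha$ to $\beta$ that recolors every vertex at most $242$ times. The base case $|V(G)|=1$ is trivial. If $\delta(G)\leq 2$, I will pick a vertex $v$ of minimum degree, apply the inductive hypothesis to $G-v$, and extend via Lemma~\ref{keylemma}; since $|L(v)|=7$ and $d_G(v)\leq 2$, this extension recolors $v$ at most $\lceil 2\cdot 242/(7-2-1)\rceil+1=122$ times, comfortably within the budget.

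Otherwise $\delta(G)\geq 3$, so Theorem~\ref{strtrianglewith5+faces} produces one of the configurations $H_4,H_5,H_6$. In each case I will: pass to a minimum-order subconfiguration $H$ (using Observations~\ref{obsH5} and~\ref{obsH6} to make the paths in $H_5$ or $H_6$ internally vertex-disjoint, apart from the forced sharing in the $5(4)3$ subcase of $H_6$), remove $V(H)$ from $G$, apply the inductive hypothesis to $G-V(H)$, and reinsert the vertices of $H$ one at a time via Lemma~\ref{keylemma}. I will engineer the reinsertion order so that each vertex, at the instant of its reinsertion, has degree at most $3$ in the current graph; then the divisor $|L(v)|-d-1$ in Lemma~\ref{keylemma} is at least $3$, yielding the recurrence $r_k\leq\lceil(2\cdot 242+r_{k-1})/3\rceil+1$.

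Concretely, for $H_4$ I will insert the two $3$-vertices one after the other, giving recoloring counts $122$ and $203$. For $H_5$ I will insert the central $4$-vertex $v_1$ first (its neighbors in the current graph are its two outside neighbors, so $d=2$ and $r_{v_1}\leq 122$), and then for each $4(p_i)3$-path traverse $a_1^i,a_2^i,\ldots,a_{p_i}^i,w_i$ in that order; every such vertex has exactly one reinserted path-neighbor and two outside neighbors, so $d=3$, and iterating the recurrence from $r_0=122$ produces the chain $122\to 203\to 230\to 239\to 242$ that covers the worst case $p_i=3$. For $H_6$ I will insert the central $5$-vertex $v_1$ first ($d=1$, $r_{v_1}\leq 50$) and traverse each $5(p_i)3$-path outward in the same fashion; the analogous chain $50\to 179\to 222\to 237\to 242$ covers $p_i\leq 3$.

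The hard part will be the $5(4)3$-path subcase of $H_6$, where a path has $p_i=4$ intermediate $4$-vertices and seems to require a sixth insertion past $r_4=242$. Here the definition of $H_6$ rescues the argument: it forces an additional edge $v_1w_i$ (the accompanying $5(0)3$-path among the four paths), which reduces the number of outside neighbors of $w_i$ from two to one. After inserting $v_1$ and $a_1,a_2,a_3,a_4$ along the path, inserting $w_i$ last gives $d(w_i)=3$ (namely $v_1$, $a_4$, and the single outside neighbor), and the total recoloring count of these neighbors is only $50+242+242=534$, yielding $r_{w_i}\leq\lceil 534/3\rceil+1=179\leq 242$; the auxiliary $5(0)3$-path $v_1w_i$ requires no separate processing since $w_i$ is already reinserted. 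A secondary obstacle is bookkeeping when paths in the minimum subconfiguration share vertices other than $v_1$; Observations~\ref{obsH5} and~\ref{obsH6} reduce such sharing to controlled situations (either a common $3$-vertex endpoint, handled by noting that the endpoint has one fewer outside neighbor, or the $5(4)3$ scenario above), so the same outward insertion order and the same degree-$3$ invariant continue to work along each path.
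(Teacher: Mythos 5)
Your proposal is correct and follows essentially the same route as the paper: the same induction with the same minimum-degree reduction, the same use of Theorem~\ref{strtrianglewith5+faces}, the same outward insertion order along the paths via Lemma~\ref{keylemma}, and the identical recurrence $c_i=\lceil(484+c_{i-1})/3\rceil+1$ producing the chains $122\to203\to230\to239\to242$ and $50\to179\to222\to237\to242$ (the paper packages the recurrence as its Claim~\ref{triangleplanarcl1}). The only cosmetic difference is in the $5(4)3$ subcase, where the paper truncates $P_i$ at the shared $3$-vertex via Observation~\ref{obsH6}(b) while you insert that vertex last using the forced edge to $v_1$; both yield the same count of $179$.
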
 
\begin{proof}
Let $G\in \mathcal{G}$ and $\alpha$ and $\beta$ be any two $L$-colorings of $G$. In order to prove that diam$(\mathcal{G}(G,L))\leq 242|V(G)|$, we prove the claim that there exists a recoloring sequence from $\alpha$ to $\beta$ that recolors every vertex of $G$ at most $242$ times. We use induction on the number of vertices of $G$ to prove the claim. If $|V(G)|=1$, then the claim is trivial. So the base case is true. First assume that there is a $2^-$-vertex $v\in V(G)$. By induction, there is a recoloring sequence $\sigma_{G-v}$ from $\alpha|_{G-v}$ to $\beta|_{G-v}$ that recolors every vertex of $G-v$ at most $242$ times. Note that the total number of recolorings of the vertices of $N(v)$ in $\sigma_{G-v}$ is at most $t=d(v)\cdot 242$. So by taking $G^*=G$, $G^{**}=G-v$, and $|L(v)|=7$ in Lemma~\ref{keylemma}, the sequence $\sigma_{G-v}$ can be extended to a recoloring sequence $\sigma_G$ from $\alpha$ to $\beta$ that recolors $v$ at most $\big\lceil \frac{d(v)\cdot 242}{7-d(v)-1}\big\rceil +1\leq 122<242$ times since $d(v)\leq 2$. Similarly, we can show that $\sigma_{G-v}$ can be extended to a desired recoloring sequence if $d(v)<2$. Now we may assume that $\delta(G)\geq 3$. Then by Theorem~\ref{strtrianglewith5+faces}, $G$ contains at least one of the configurations $H_4$, $H_5$, and $H_6$. Now for each configuration, we show that the claim holds.

\textbf{Configuration $H_4$:} Let $uv$ be an edge such that $u$ and $v$ are $3$-vertices. By induction, there exists a recoloring sequence $\sigma_{G-\{u,v\}}$ from $\alpha|_{G-\{u,v\}}$ to $\beta|_{G-\{u,v\}}$ that recolors every vertex of $G-\{u,v\}$ at most $242$ times. Note that $d_{G-u}(v)=2$ and the total number of recolorings of the vertices of $N(v)\cap (G-\{u,v\})$ in $\sigma_{G-\{u,v\}}$ is at most $2\cdot 242$. So by taking $G^*=G-u$, $G^{**}=G-\{u,v\}$, and $|L(v)|=7$ in Lemma~\ref{keylemma}, $\sigma_{G-\{u,v\}}$ can be extended to a recoloring sequence $\sigma_{G-u}$ from $\alpha|_{G-u}$ to $\beta|_{G-u}$ that recolors $v$ at most $\big\lceil \frac{2\cdot 242}{7-2-1}\big\rceil +1=122<242$ times. Note that the total number of recoloring of the vertices of $N(u)$ in $\sigma_{G-u}$ is at most $t=2\cdot 242+122$. So by taking $G^*=G$, $G^{**}=G-u$, and $|L(v)|=7$ in Lemma~\ref{keylemma}, $\sigma_{G-u}$ can be extended to a recoloring sequence $\sigma_{G}$ from $\alpha$ to $\beta$ that recolors $u$ at most $\big\lceil \frac{2\cdot 242+122}{7-3-1}\big\rceil+1=203<242$ times.

\begin{claim}\label{triangleplanarcl1}
If $v_1v_2\ldots  v_{p+2}$ is a $x(p)3$-path of a subgraph $G'$ of $G$ and a subgraph $G''$ of $G'$ contains that path 
and there is a recoloring sequence $\sigma_{G''-\{v_2,\ldots,v_{p+2}\}}$ from $\alpha|_{G''-\{v_2,\ldots,v_{p+2}\}}$ to $\beta|_{G''-\{v_2,\ldots,v_{p+2}\}}$ that recolors every vertex of $G''-\{v_1,\ldots,v_{p+2}\}$ at most $242$ times and $v_1$ at most $c_1$ times, then $\sigma_{G''-\{v_2,\ldots,v_{p+2}\}}$ can be extended to a recoloring sequence $\sigma_{G''}$ from $\alpha|_{G''}$ to $\beta|_{G''}$ that recolors $v_i$ at most $c_i$ times, where $c_i=\big\lceil \frac{484+c_{i-1}}{3}\big\rceil +1$ for $i\in\{2,\ldots,p+2\}$.
\end{claim}
\begin{proof}[Proof of Claim~\ref{triangleplanarcl1}]
Let the hypothesis of the claim be true. Consider the sets $X_i=\{v_i,\ldots,v_{p+2}\}$ for $i\in \{2,\ldots,p+2\}$. Note that $d_{G'}(v_2)=4$, $d_{G''}(v_2)\leq 4$, and $d_{G''-X_3}(v_2)\leq 3$. We take $d_{G''}(v_2)=4$ and $d_{G''-X_3}(v_2)=3$ to determine $c_2$. For other values of $d_{G''}(v_2)$ and $d_{G''-X_3}(v_2)$, $c_2$ can be found to be even less by a similar approach. Note that the total number of recolorings of the vertices of $N_{G''}(v_2)\cap (G''-X_2)$ in $\sigma_{G''-X_2}$ is at most $t=2\cdot 242+c_1$. So by taking $G^*=G''-X_3$, $G^{**}=G''-X_2$, and $|L(v)|=7$ in Lemma~\ref{keylemma}, the recoloring sequence $\sigma_{G''-X_2}$ can be extended to a recoloring sequence $\sigma_{G''-X_3}$ from $\alpha|_{G''-X_3}$ to $\beta|_{G''-X_3}$ that recolors $v_2$ at most $c_2= \big\lceil \frac{2\cdot 242+c_1}{7-3-1}\big\rceil +1= \big\lceil \frac{484+c_1}{3}\big\rceil +1$ times. Similarly, by using Lemma~\ref{keylemma}, $\sigma_{G''-X_3}$ can be extended to a recoloring sequence $\sigma_{G''-X_4}$ that recolors $v_3$ at most $c_3= \big\lceil \frac{484+c_2}{3}\big\rceil +1$ times, then $\sigma_{G''-X_4}$ to $\sigma_{G''-X_5}$ and so on. At the end, we obtain a desired recoloring sequence. 
\end{proof}

\textbf{Configuration $H_5$:} Without loss of generality, assume that $H^*$ is a subgraph of $G$ that has minimum possible number of vertices among all the configurations of type $H_5$ in $G$. Let $P_1=v_1v_2^1\ldots v_{p_1+2}^1$ and $P_2=v_1v_2^2\ldots v_{p_2+2}^2$ be the $4(p_1)3$-path and the $4(p_2)3$-path, respectively with $|V(P_1)|\geq |V(P_2)|$ that forms the subgraph $H^*$. Now we define $P_1^*$ and $p_1^*$ with the help of Observation~\ref{obsH5}. If Observation~\ref{obsH5}(a) holds, that is $P_1$ and $P_2$ do not share any vertex other than $v_1$, then we take $P_1^*=P_1$ and $p_1^*=p_1$. Now if Observation~\ref{obsH5}(b) holds, then we define $P_1^*$ and $p_1^*$ as defined in Observation~\ref{obsH5}(b). 

Let $P=V(P_1^*)\cup V(P_2)$, $P'=P\setminus \{v_1\}$, and $P''=P'\setminus V(P_1^*)$.
 By induction, there exists a recoloring sequence $\sigma_{G-P}$ from $\alpha|_{G-P}$ to $\beta|_{G-P}$ that recolors every vertex of $G-P$ at most $242$ times. Note that $d_{G-P'}(v_1)=2$ and the total number of recolorings of the vertices of $N(v_1)\cap (G-P)$ in $\sigma_{G-P}$ is at most $t=2\cdot 242$. Hence by taking $G^*=G-P'$, $G^{**}=G-P$, and $|L(v)|=7$ in Lemma~\ref{keylemma}, $\sigma_{G-P}$ can be extended to a recoloring sequence $\sigma_{G-P'}$ from $\alpha|_{G-P'}$ to $\beta|_{G-P'}$ that recolors $v_1$ at most $\big\lceil \frac{2\cdot 242}{7-2-1}\big\rceil +1=122<242$ times. Note that $P_1^*$ is contained in $G-P''$ and it is a $4(p_1^*)3$-path of a subgraph $G'$ of $G$ containing $G-P''$, where $G'=G$ if $P_1^*=P_1$ and $G'=G-w$ if $P_1^*$ is a proper subpath of $P_1$ (see Observation~\ref{obsH5}(b)).  Further note that $\sigma_{G-P'}$ is a recoloring sequence that recolors every vertex of $G-P'$ at most $242$ times and $v_1$ at most $122$ times. Hence by taking $G''=G-P''$ in Claim~\ref{triangleplanarcl1}, $\sigma_{G-P'}$ can be extended to a recoloring sequence $\sigma_{G-P''}$ from $\alpha|_{G-P''}$ to $\beta|_{G-P''}$ that recolors $v_i^1$ at most $c_i$ times, where $c_i=\big\lceil \frac{484+c_{i-1}}{3}\big\rceil +1$ for $i\in\{2,\ldots,p_1^*+2\}$. Similarly, by Claim~\ref{triangleplanarcl1}, $\sigma_{G-P''}$ can be extended to a recoloring sequence $\sigma_{G}$ from $\alpha$ to $\beta$ that recolors $v_i^2$ at most $c_i$ times, where $c_i=\big\lceil \frac{484+c_{i-1}}{3}\big\rceil +1$ for $i\in\{2,\ldots,p_2+2\}$. Let $p=\max\{p_1^*,p_2\}$. Note that $p\leq 3$. Now since $c_1=122$ and $p\leq 3$, we have $\displaystyle\max_{2\leq i\leq p+2}\{c_i\}\leq 242$. So $\sigma_G$ is the desired recoloring sequence.

\textbf{Configuration $H_6$:} Without loss of generality, assume that $H^{**}$ is a subgraph of $G$ that has minimum possible number of vertices among all the configurations of type $H_6$ in $G$. Let $P_1=v_1v_2^1\ldots v_{p_1+2}^1$, $P_2=v_1v_2^2\ldots v_{p_2+2}^2$, $P_3=v_1v_2^3\ldots v_{p_3+2}^3$, and $P_4=v_1v_2^4\ldots v_{p_4+2}^4$ be the $5(p_1)3$-path, the $5(p_2)3$-path, the $5(p_3)3$-path, and the $5(p_4)3$-path, respectively with $|V(P_1)|\geq |V(P_2)|\geq |V(P_3)|\geq |V(P_4)|$ that forms the subgraph $H^{**}$. Note that $4\geq p_1\geq p_2\geq p_3\geq p_4$. So if $p_4=4$, then $p_i=4$ for every $i\in \{1,2,3,4\}$. This contradicts the definition of $H_6$ stating the existence of a $5(0)3$-path among the four paths. So $p_4\leq 3$. Now for $i=1$ to $3$ in order, we define $P_i^*$ and $p_i^*$ with the help of Observation~\ref{obsH6}. If Observation~\ref{obsH6}(a) holds, that is $P_i$ does not share any vertex with $P_{i+1}\cup \ldots\cup P_4$, then we take $P_i^*=P_i$ and $p_i^*=p_i$. If Observation~\ref{obsH6}(b) holds, then we define $P_i^*$ and $p_i^*$ as defined in Observation~\ref{obsH6}(b). Clearly, $V(H^{**})=V(P_1^*\cup P_2^*\cup P_3^*\cup P_4)$. Now we show that $p_i^*\leq 3$ for every $i\in \{1,2,3\}$. If $p_i\leq 3$, then we are done. So assume that $p_i=4$, that is $P_i$ is a $5(4)3$-path. Then by the definition of $H_6$, there is a $5(0)3$-path $P_j$ for some $j\in\{i+1,\ldots,4\}$ such that $P_i$ and $P_j$ shares a vertex. Then $P_i^*$ is a proper subpath of $P_i$ by Observation~\ref{obsH6}(b) and hence $p_i^*\leq 3$.

Let $P=\left(\cup_{1\leq j\leq 3}(V(P_j^*)\right)\cup V(P_4)$, $P'=P\setminus \{v_1\}$, $Q_1=P'\setminus V(P_1^*)$, $Q_2=Q_1\setminus V(P_2^*)$, and $Q_3=Q_2\setminus V(P_3^*)$. By induction, there exists a recoloring sequence $\sigma_{G-P}$ from $\alpha|_{G-P}$ to $\beta|_{G-P}$ that recolors every vertex of $G-P$ at most $242$ times. Note that $d_{G-P'}(v_1)=1$ and the total number of recolorings of the vertices of $N(v_1)\cap (G-P)$ in $\sigma_{G-P}$ is at most $t=242$. So by taking $G^*=G-P'$, $G^{**}=G-P$, and $|L(v)|=7$ in Lemma~\ref{keylemma}, $\sigma_{G-P}$ can be extended to a recoloring sequence $\sigma_{G-P'}$ from $\alpha|_{G-P'}$ to $\beta|_{G-P'}$ that recolors $v_1$ at most $c_1=\big\lceil \frac{242}{7-1-1}\big\rceil +1=50$ times. Note that $P_1^*$ is contained in $G-Q_1$ and $P_1^*$ is a $5(p_1^*)3$-path of a subgraph $G'$ of $G$ containing $G-Q_1$, where $G'=G$ if $P_1^*=P_1$ and $G'=G-w$ if $P_1^*$ is a proper subpath of $P_1$ (see Observation~\ref{obsH6}(b)). 
Further note that $\sigma_{G-P'}$ recolors every vertex of $G-Q_1$ at most $242$-times and $v_1$ at most $50$ times. So by taking $G''=G-Q_1$ in Claim~\ref{triangleplanarcl1}, $\sigma_{G-P'}$ can be extended to a recoloring sequence $\sigma_{G-Q_1}$ such that $v_i^1$ is recolored at most $c_i$ times, where $c_i=\big\lceil\frac{484+c_{i-1}}{3}\big\rceil+1$ for $i\in \{2,\ldots,p_1^*+2\}$. Now since $c_1=50$ and $p_1^*\leq 3$, we have $\displaystyle\max_{1\leq i\leq p_1^*+2}\{c_i\}\leq 242$. Similarly, we use Claim~\ref{triangleplanarcl1} repeatedly to extend the recoloring sequence $\sigma_{G-Q_1}$ to $\sigma_{G-Q_2}$, $\sigma_{G-Q_2}$ to $\sigma_{G-Q_3}$, and lastly, $\sigma_{G-Q_3}$ to $\sigma_{G}$ such that $\sigma_G$ is the desired recoloring sequence.
\end{proof}

\section{Proof of Theorem~\ref{planarwithout4cycles}}

We use the following structural result given by Shen et al.~\cite{shen} to prove Theorem~\ref{planarwithout4cycles}.

\begin{lemma}[\cite{shen}]\label{strplanarwithout4cycles}
Let $G$ be a plane graph such that $\delta(G)\geq 4$ and $G$ has no $4$-cycles. Then there exists a $4$-vertex $v$ in $G$ that is incident to two non-adjacent $3$-faces, say $f_1$ induced by $\{v,v_1,v_2\}$ and $f_2$ induced by $\{v,v_3,v_4\}$, such that every vertex but at most one of $\{v_1, v_2, v_3, v_4\}$ has degree exactly~$4$.
\end{lemma}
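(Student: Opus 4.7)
This is a structural result for planar graphs without $4$-cycles, and the natural tool is the discharging method. The plan is as follows.

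First, I would extract the structural consequences of the hypotheses. Because $G$ has no $4$-cycles, $G$ has no $4$-face, and any two $3$-faces of $G$ are edge-disjoint (two triangles sharing an edge yield a $4$-cycle on their four vertices). Consequently, at every vertex $w$ the $3$-faces incident to $w$ use pairwise-disjoint pairs of consecutive edges around $w$, so $w$ is incident to at most $\lfloor d(w)/2\rfloor$ triangular faces. In particular, a $4$-vertex is incident to at most two $3$-faces, and when it is incident to two, those two $3$-faces are necessarily ``opposite'' at $w$ and fit the shape described in the statement.

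Next, I would argue by contradiction: assume that for every $4$-vertex $v$ incident to two (necessarily opposite) $3$-faces $\{v,v_1,v_2\}$ and $\{v,v_3,v_4\}$, at least two of $v_1,v_2,v_3,v_4$ are $5^+$-vertices. Assign the balanced initial charge $ch(x) = d(x)-4$ to every $x \in V(G) \cup F(G)$, which totals $-8$ by Euler's formula (as in Subsection~\ref{balanced}). Since $\delta(G)\geq 4$ and there are no $4$-faces, the only elements with negative charge are the $3$-faces. I would introduce three discharging rules:
\begin{itemize}
\item[$R_1$:] every $3$-face pulls $1/3$ from each of its three incident vertices;
\item[$R_2$:] every $5^+$-face $f$ sends $(d(f)-4)/d(f)$ to each incident $4$-vertex (and nothing to incident $5^+$-vertices);
\item[$R_3$:] every $4$-vertex $v$ incident to two $3$-faces pulls a carefully calibrated positive amount from the $5^+$-vertices among $v_1,v_2,v_3,v_4$, so that together with the two opposite $R_2$-contributions this covers the $2/3$ deficit produced by $R_1$.
\end{itemize}
I would then verify that every $3$-face ends at $0$, every $5^+$-face ends at $d(f)-4$ minus what it gave in $R_2$, which is $\geq 0$, every $4$-vertex ends $\geq 0$ (using the contradiction hypothesis in the bad-$4$-vertex case and, in the one- or zero- incident $3$-face case, using the three or four incident $5^+$-faces feeding $R_2$), and every $5^+$-vertex ends $\geq 0$. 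Positivity of the total then contradicts $-8$.

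The main obstacle is the calibration of $R_3$: a single $5^+$-vertex $u$ can serve as a donor to several bad $4$-vertex neighbours lying on distinct $3$-faces at $u$, so the per-donor transfer in $R_3$ must be small enough that $u$ — especially a $5$-vertex sitting on two $3$-faces — retains non-negative charge, while still being large enough that the two guaranteed $5^+$-donors at a bad $4$-vertex collectively supply the required deficit. This balancing, together with a careful count of how many bad $4$-vertex neighbours a $5^+$-vertex can have in its incident $3$-faces (at most $2\lfloor d(u)/2\rfloor$), is where the delicate part of the argument lies; a minor auxiliary rule routing a fraction of the $R_2$-credit back from $4$-vertices to their $5^+$-neighbours may be needed to absorb the residual loss at $5$-vertices.
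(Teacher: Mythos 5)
First, a point of reference: the paper does not prove this lemma at all --- it is imported verbatim from Shen et al.~\cite{shen} and used as a black box in the proof of Theorem~\ref{planarwithout4cycles}. So there is no in-paper argument to compare against, and your proposal has to stand entirely on its own. Your preliminary observations are all correct: no $4$-cycles forces any two $3$-faces to be edge-disjoint, a vertex $w$ lies on at most $\lfloor d(w)/2\rfloor$ triangular faces, and a $4$-vertex on two $3$-faces necessarily has them in the ``opposite'' position described in the statement. The reduction to a discharging argument under the negation hypothesis is also the right move.

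The genuine gap is exactly where you flag it, and it is not minor. Quantify the worst case: under your rules a $3$-face is settled by $R_1$; a needy $4$-vertex $v$ on two $3$-faces pays $2/3$, receives at least $2\cdot\frac{1}{5}=\frac{2}{5}$ from its two $5^+$-faces via $R_2$, and so needs $\frac{2}{3}-\frac{2}{5}=\frac{4}{15}$ from its (at least two) $5^+$-donors, i.e.\ at least $\frac{2}{15}$ per donor. Now take a $5$-vertex $u$ lying on two $3$-faces $\{u,a,b\}$ and $\{u,c,d\}$ in which $a,b,c,d$ are all such needy $4$-vertices. After $R_1$ the vertex $u$ retains $1-\frac{2}{3}=\frac{1}{3}=\frac{5}{15}$, but $R_3$ charges it $4\cdot\frac{2}{15}=\frac{8}{15}$, so it ends negative and no contradiction with $ch(G)=-8$ is obtained. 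Crucially, the negation hypothesis does not exclude this configuration: for the $4$-vertex $a$ with triangles $\{u,a,b\}$ and $\{a,x_a,y_a\}$, the requirement of two $5^+$-vertices among $\{u,b,x_a,y_a\}$ is already half-met by $u$ itself, so it only forces one of the \emph{far} neighbours $x_a,y_a$ to be a $5^+$-vertex and does nothing to thin out the needy neighbours of $u$. Your proposed repair --- routing part of the $R_2$-credit back from $4$-vertices to their $5^+$-neighbours --- fails in precisely this worst case, because a needy $4$-vertex whose two big faces are $5$-faces has zero surplus to route back. So the rule set has to be redesigned (e.g.\ making the donation asymmetric in the degree or position of the donor, or having large faces subsidize $5$-vertices directly), and that redesign is the actual content of the lemma; as written, the argument does not close.
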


\begin{proof}[Proof of Theorem~\ref{planarwithout4cycles}]
Let $\alpha$ and $\beta$ be two $L$-colorings of $G$. In order to prove that diam$(\mathcal{G}(G,L))\leq 29|V(G)|$, we prove the claim that there exists a recoloring sequence from $\alpha$ to $\beta$ that recolors every vertex of $G$ at most $29$ times. We use induction on the number of vertices to prove the claim. If $|V(G)|=1$, then the claim is trivial. So the base case is true. First assume that there is a $3^-$-vertex $v\in V(G)$. By induction, there exists a recoloring sequence $\sigma_{G-v}$ from $\alpha|_{G-v}$ to $\beta|_{G-v}$ that recolors every vertex of $G-v$ at most $29$ times. 
Note that the total number of recolorings of the vertices of $N(v)$ in $\sigma_{G-v}$ is at most $t=d(v)\cdot 29$. So by taking $G^*=G$, $G^{**}=G-v$, and $|L(v)|=8$ in Lemma~\ref{keylemma}, $\sigma_{G-v}$ can be extended to a recoloring sequence $\sigma_G$ from $\alpha$ to $\beta$ that recolors $v$ at most $\big\lceil \frac{d(v)\cdot 29}{8-d(v)-1}\big\rceil +1=23<29$ times since $d(v)\leq 3$. 
So we may assume that $\delta(G)\geq 4$. Then by Lemma~\ref{strplanarwithout4cycles}, there exists a $3$-face $f$ induced by $\{v_1,v_2,v_3\}$ such that $d(v_i)=4$ for $i\in \{1,2,3\}$ and $v_1$ has a $4$-neighbor $v_4$ that is not incident to $f$. By induction, there exists a recoloring sequence $\sigma_{G-\{v_1,v_2,v_3,v_4\}}$ from $\alpha|_{G-\{v_1,v_2,v_3,v_4\}}$ to $\beta|_{G-\{v_1,v_2,v_3,v_4\}}$ that recolors every vertex of $G-\{v_1,v_2,v_3,v_4\}$ at most $29$ times. 
Note that $d_{G-\{v_1,v_3,v_4\}}(v_2)=2$ and the total number of recolorings of the vertices of $N(v_2)\cap (G-\{v_1,v_2,v_3,v_4\})$ in $\sigma_{G-\{v_1,v_2,v_3,v_4\}}$ is at most $t=2\cdot 29$. So by taking $G^*=G-\{v_1,v_3,v_4\}$, $G^{**}=G-\{v_1,v_2,v_3,v_4\}$, and $|L(v_2)|=8$ in Lemma~\ref{keylemma}, $\sigma_{G-\{v_1,v_2,v_3,v_4\}}$ can be extended to a recoloring sequence $\sigma_{G-\{v_1,v_3,v_4\}}$ from $\alpha|_{G-\{v_1,v_3,v_4\}}$ to $\beta|_{G-\{v_1,v_3,v_4\}}$ that recolors $v_2$ at most $\big\lceil \frac{2\cdot 29}{8-2-1}\big\rceil +1=13$ times. Note that $d_{G-\{v_1,v_4\}}(v_3)=3$ and the total number of recolorings of the vertices of $N(v_3)\cap (G-\{v_1,v_3,v_4\})$ in $\sigma_{G-\{v_1,v_3,v_4\}}$ is at most $t=2\cdot 29+13$. So by taking $G^*=G-\{v_1,v_4\}$, $G^{**}=G-\{v_1,v_3,v_4\}$, and $|L(v_3)|=8$ in Lemma~\ref{keylemma}, $\sigma_{G-\{v_1,v_3,v_4\}}$ can be extended to a recoloring sequence $\sigma_{G-\{v_1,v_4\}}$ from $\alpha|_{G-\{v_1,v_4\}}$ to $\beta|_{G-\{v_1,v_4\}}$ that recolors $v_3$ at most $\big\lceil \frac{2\cdot 29+13}{8-3-1}\big\rceil +1=19$ times. Similarly, $\sigma_{G-\{v_1,v_4\}}$ can be extended to $\sigma_{G-v_1}$ and then $\sigma_{G-v_1}$ can be extended to $\sigma_{G}$ from $\alpha$ to $\beta$ that recolors $v_4$ at most $\big\lceil \frac{3\cdot 29}{8-3-1}\big\rceil +1=23$ times and $v_1$ at most $\big\lceil \frac{29+13+19+23}{8-4-1}\big\rceil +1=29$ times. 
\end{proof}

\section{Graphs with bounded independence number}

In this section, we prove Theorem~\ref{boundedindependence}. For a color $i$ of a coloring $c$ of a graph $G$, $c^{-1}(i)$ is the set of vertices of $G$ that are assigned color $i$ in $c$. For two colorings $c_1$ and $c_2$ of $G$, we say that $c_1$ and $c_2$ share a color class if there exist colors $i$ and $j$ of $c_1$ and $c_2$, respectively such that $c_1^{-1}(i)=c_2^{-1}(j)$. A \emph{frozen $k$-coloring} of a graph $G$ is a $k$-coloring of $G$ such that all the $k$ colors appear in $N(v)\cup \{v\}$ for every $v\in V(G)$. Note that a frozen $k$-coloring of a graph $G$ is an isolated vertex in $\mathcal{G}(G,k)$. So to show that $\mathcal{G}(G,k)$  is disconnected, it is sufficient to show the existence of a frozen $k$-coloring of $G$. We divide the proof of Theorem~\ref{boundedindependence} into two parts. In Lemma~\ref{frozen}, we show the existence of a graph $G$ such that $\mathcal{G}(G,\big\lfloor \frac{pk}{2}\big\rfloor)$ is disconnected. In Lemma~\ref{mainlemma}, we show that diam$(\mathcal{G}(G,k'))\leq 4|V(G)|$ for every $k'\geq \big\lfloor \frac{pk}{2}\big\rfloor+1$. So the proof of Theorem~\ref{boundedindependence} follows by Lemma~\ref{frozen} and Lemma~\ref{mainlemma}. 

\begin{figure}[hbtp]
\centering
\includegraphics[scale=.6]{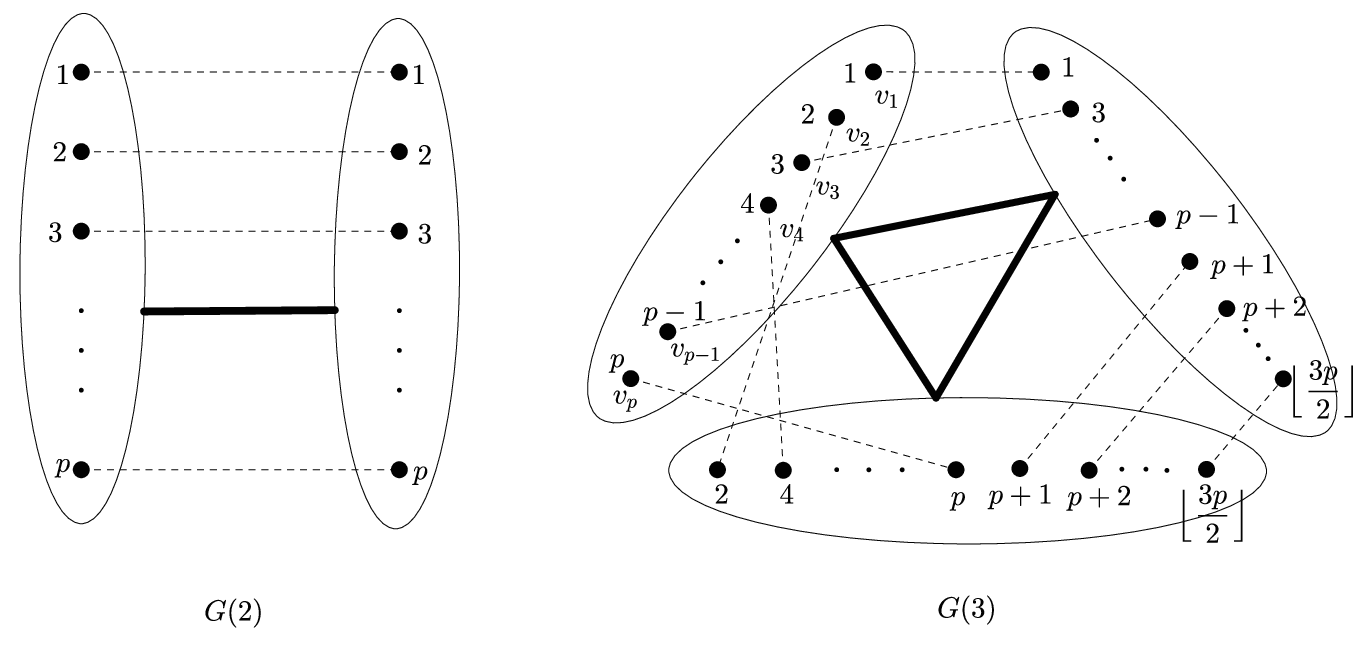}
\caption{Elliptical regions represent independent sets. Bold line between two sets represents that all the edges between the sets are present except the edges represented by dashed lines. The number next to a vertex is the color assigned to that vertex.}\label{figure4}
\end{figure}

\begin{lemma}\label{frozen}
For every $p\geq 2$ and $k\geq 2$, there exists a $k$-colorable graph with the independence number at most $p$ such that $\mathcal{G}(G,\big\lfloor \frac{pk}{2}\big\rfloor)$ is disconnected.
\end{lemma}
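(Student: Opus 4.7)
The plan is to exhibit, for each $p \geq 2$ and $k \geq 2$, an explicit $k$-colorable graph $G$ with $\alpha(G) \leq p$ admitting a frozen $\ell$-coloring, where $\ell = \lfloor pk/2 \rfloor$. Since any frozen $\ell$-coloring is an isolated vertex of $\mathcal{G}(G,\ell)$ (as noted in the text preceding Lemma~\ref{frozen}), this immediately yields the desired disconnectedness.

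The construction I would use is the following. Let $V(G) = V_1 \cup \cdots \cup V_k$ with $|V_i| = p$, and fix a partition $\mathcal{C} = \{C_1, \ldots, C_\ell\}$ of $V(G)$ into $\ell$ classes, each of size $2$ or $3$, with the key structural property that no $C_j$ is a proper subset of any single $V_i$ (so either $C_j$ meets at least two parts, or $C_j$ equals some part $V_i$ in its entirety). Declare $uv \in E(G)$ exactly when $u$ and $v$ lie in different parts \emph{and} in different classes. Such a $\mathcal{C}$ can be produced by splitting on the parity of $pk$: when $pk$ is even, I either take $C_i = V_i$ (if $p = 2$) or pair the vertices via a perfect cross-part matching in the complete $k$-partite graph $K_{p,\ldots,p}$, which exists for $k \geq 2$ by Hall's condition; when $pk$ is odd (so both $p,k$ are odd and $p \geq 3$), I include one size-$3$ class — equal to a part $V_i$ if $p = 3$, or spread across three distinct parts if $p \geq 5$ — and pair the remaining $pk - 3$ vertices across parts, again by Hall.

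The verification that $G$ has the claimed properties is then routine. The partition $(V_1, \ldots, V_k)$ gives a proper $k$-coloring by the edge rule, so $\chi(G) \leq k$. For the independence number, any independent set $I$ containing vertices from two different parts must (by non-adjacency in our edge rule) lie entirely in a single class $C_j$; a quick check shows every additional vertex of $I$ is also forced into $C_j$, so $|I| \leq 3$. Otherwise $I \subseteq V_i$ and $|I| \leq p$. Since size-$3$ classes arise only when $p \geq 3$, this yields $\alpha(G) \leq p$. Finally, $\mathcal{C}$ is a proper $\ell$-coloring because same-class pairs are never edges, and it is frozen because for any $v \in V_a \cap C_i$ and any $C_j \neq C_i$ the structural condition guarantees some $w \in C_j$ outside $V_a$ (as $C_j$ is not a proper subset of $V_a$, and cannot equal $V_a$ either since $v \in V_a \setminus C_j$), and this $w$ is adjacent to $v$ by the edge rule; hence color $j$ appears in $N(v) \cup \{v\}$.

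The only genuinely delicate step will be arranging the partition $\mathcal{C}$ in the $pk$-odd case; however, once the single size-$3$ class is placed, what remains is a standard perfect-matching question in a complete multipartite graph on $pk - 3$ vertices whose parts all have size at most $p \leq (pk-3)/2$, which is immediate from Hall. Everything else is a direct check of independence, domination, and the two coloring conditions.
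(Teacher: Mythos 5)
Your proof is correct, but it takes a genuinely different route from the paper's. The paper builds two explicit gadgets --- $G(2)$, which is $K_{p,p}$ minus a perfect matching with a frozen $p$-coloring, and $G(3)$, a three-part graph with a frozen $\lfloor 3p/2\rfloor$-coloring (using a part of size $p-1$ when $p$ is odd) --- and then handles general $k\geq 4$ by writing $k=2x+3y$ with $y\in\{0,1\}$ and taking the join of $x$ copies of $G(2)$ and $y$ copies of $G(3)$, observing that frozen colorings of the factors combine into a frozen coloring of the join. You instead give a single unified construction for all $k$: the complete $k$-partite graph $K_{p,\ldots,p}$ with non-edges prescribed by a partition into $\lfloor pk/2\rfloor$ classes of size $2$ or $3$, where the parity issue is absorbed by one size-$3$ class rather than by shrinking a part, and existence of the class system reduces to a perfect-matching condition in a complete multipartite graph. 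The two families essentially coincide when $pk$ is even (the paper's join of $G(2)$'s is your construction for one particular choice of cross-part matching), and differ slightly in the odd case. Your approach buys uniformity and avoids the case split on $k$ and the join argument, at the cost of the matching-existence step and the slightly more delicate verification of the ``no class properly inside a part'' condition; the paper's approach is more concrete and modular, and the join trick would let one plug in other frozen gadgets. I checked the details of your argument --- the class count, the Hall-type condition in both parity cases, the bound $\alpha(G)\leq p$ (an independent set meeting two parts is forced into a single class, and size-$3$ classes occur only when $p\geq 3$), and the frozen property --- and they all go through.
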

\begin{proof}
Fix an integer $p\geq 2$. To prove the lemma, first we show the $k=2$ case by showing the existence of a $2$-colorable graph $G(2)$ with the independence number at most $p$ such that $\mathcal{G}(G(2),p)$ is disconnected. Then we show the $k=3$ case by showing the existence of a $3$-colorable graph $G(3)$ with the independence number at most $p$ such that $\mathcal{G}(G(3),\big\lfloor \frac{3p}{2}\big\rfloor)$ is disconnected. Then by using these two special graphs, we show the existence of a $k$-colorable graph $G(k)$ with the independence number at most $p$ such that $\mathcal{G}(G(k),\big\lfloor \frac{pk}{2}\big\rfloor)$ is disconnected for every $k\geq 4$.

Let $G(2)$ be the bipartite graph obtained by removing a perfect matching from a complete bipartite graph with both the partite sets having cardinality $p$. Note that $G(2)$ has independence number $p$. Now properly color the graph $G(2)$ such that every color from the set $\{1,2,\ldots,p\}$ appears in each partite set of $G(2)$ (see Figure~\ref{figure4}(left)). Note that the coloring obtained is a frozen $p$-coloring of $G(2)$ and hence $\mathcal{G}(G(2),p)$ is disconnected. Since $G(2)$ is $2$-colorable, Lemma~\ref{frozen} holds for $k=2$. The construction of the graph $G(2)$ first appeared in \cite{cerecedaconjec}. 

Now we construct a $3$-colorable graph $G(3)$ with the independence number at most $p$ such that $\mathcal{G}(G(3),\big\lfloor \frac{3p}{2}\big\rfloor)$ is disconnected as follows. If $p$ is even, then $V(G(3))$ has $3p$ vertices; otherwise $V(G(3))$ has $3p-1$ vertices. The vertex set $V(G(3))$ can be partitioned as $I_1\cup I_2 \cup I_3$ such that $I_1, I_2$, and $I_3$ are independent sets and $|I_1|=|I_2|=p$. Let $I_1=\{v_1,v_2,\ldots, v_{p}\}$. There exists a $\big\lfloor \frac{3p}{2}\big\rfloor$-coloring $c$ of $G(3)$ such that the following holds. The vertex $v_i$ is assigned color $i$ for every $i\leq p$. Every vertex of $I_2$ is assigned a distinct color $i$ if $i\leq p$ and $i$ is odd. Every vertex of $I_3$ is assigned a distinct color $i$ if $i\leq p$ and $i$ is even. Every remaining vertex of $I_2$ is assigned a distinct color from the set $\{p+1,p+2,\ldots,\big\lfloor \frac{3p}{2}\big\rfloor\}$. Similarly, every remaining vertex of $I_3$ is assigned a distinct color from the set $\{p+1,p+2,\ldots,\big\lfloor \frac{3p}{2}\big\rfloor\}$. All possible edges are present in $G(3)$ such that $I_1,I_2$, and $I_3$ remain independent sets and $c$ remains a proper coloring. We refer to Figure~\ref{figure4}(right) for a representation of $G(3)$ when $p$ is even. Note that $c$ is a frozen $\big\lfloor\frac{3p}{2}\big\rfloor$-coloring of $G(3)$. Hence Lemma~\ref{frozen} holds for $k=3$.

Now assume that $k\geq 4$. Note that there exist integers $x$ and $y$ such that $k=2x+3y$ and $y\in \{0,1\}$. Consider a graph $G(k)$ that is constructed by taking $x$ copies of $G(2)$ and $y$ copy of $G(3)$ and adding all possible edges between every pair of copies. Note that $G(k)$ has independence number at most $p$. Since $G(2)$ has a frozen $p$-coloring and $G(3)$ has a frozen $\big\lfloor \frac{3p}{2}\big\rfloor$-coloring, $G(k)$ has a frozen $xp+y\big\lfloor \frac{3p}{2}\big\rfloor$-coloring. If $k$ is even, then $x=\frac{k}{2}$ and $y=0$. If $k$ is odd, then $x=\frac{k-3}{2}$ and $y=1$. So for any $k\geq 4$, $G(k)$ has a frozen $\big\lfloor \frac{pk}{2}\big\rfloor$-coloring. Hence $G(k)$ is a $k$-colorable graph with the independence number at most $p$ such that $\mathcal{G}(G(k),\big\lfloor \frac{pk}{2}\big\rfloor)$ is disconnected.
\end{proof}

\begin{lemma}[Renaming Lemma~\cite{bonamytree}]\label{renaminglem}
If $c_1$ and $c_2$ are two $k$-colorings of a graph $G$ that induce the same partition of vertices into color classes, then there exists a recoloring sequence $\sigma_G$ from $c_1$ to $c_2$ in $\mathcal{G}(G,k'); k'>k$ that recolors every vertex at most $2$ times.
\end{lemma}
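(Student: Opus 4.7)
The plan is to observe that since $c_1$ and $c_2$ induce the same partition of $V(G)$ into color classes, there exists a permutation $\pi$ of $\{1,\ldots,k\}$ such that $c_2(v)=\pi(c_1(v))$ for every $v\in V(G)$. Because $k'>k$, the palette $\{1,\ldots,k'\}$ contains at least one color $c^*$ that is not used by $c_1$; this color will serve as a buffer during the recoloring. I would decompose $\pi$ into disjoint cycles and process each non-trivial cycle independently and in sequence, leaving fixed points alone since they need no action.

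For a non-trivial cycle $(i_1,i_2,\ldots,i_m)$ with $\pi(i_j)=i_{j+1}$ (indices taken mod $m$), I would perform the following cascade one vertex at a time. First, recolor every vertex of color $i_m$ to $c^*$; this is legal for each individual move because $c^*$ is currently unused, so no neighbor has color $c^*$. Next, recolor every vertex of color $i_{m-1}$ to $i_m$, which is legal since color class $i_m$ is now empty. Continue in this pattern, recoloring $i_{m-2}\to i_{m-1}$, then $i_{m-3}\to i_{m-2}$, and so on, until every vertex originally of color $i_1$ has been recolored to $i_2$. Finally, recolor every vertex currently of color $c^*$ (these are precisely the vertices whose original color was $i_m$) to $i_1$, which is legal because color class $i_1$ is now empty. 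At this point the current coloring agrees with $c_2$ on all vertices whose $c_1$-color lies in $\{i_1,\ldots,i_m\}$, and no other vertex has been touched.

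Counting recolorings inside the processing of one cycle: vertices of original color $i_m$ are recolored exactly twice (once into $c^*$ and once out of $c^*$ to $i_1$), while vertices of original color $i_j$ with $j<m$ are recolored exactly once. Since every vertex of $G$ belongs to exactly one cycle of $\pi$ (the cycle containing its $c_1$-color), and disjoint cycles are processed independently on disjoint vertex sets, the total number of recolorings per vertex across the whole sequence is at most $2$, which is what the lemma requires.

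The main subtlety, and the only place the hypothesis $k'>k$ is used, is the need for the buffer color $c^*$: without it one would be forced to attempt a direct rotation of color classes along the cycle, and the first such change would typically create a monochromatic edge (two neighbors both sitting in consecutive classes of the cycle). The cascade works precisely because at the moment any single vertex is recolored its target color class is empty in the current coloring, so properness is preserved at every step of $\sigma_G$.
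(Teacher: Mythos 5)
Your proof is correct. Note that the paper gives no proof of Lemma~\ref{renaminglem} at all; it is imported verbatim from \cite{bonamytree}, so there is nothing in the source to compare against. Your argument --- extend the class correspondence to a permutation $\pi$ of $\{1,\ldots,k\}$, decompose $\pi$ into cycles, and rotate each nontrivial cycle using the single spare color guaranteed by $k'>k$ as a buffer --- is the standard proof of this renaming lemma, and every step checks out: each individual move sends a vertex either to the currently unused buffer color or into a color class that is empty at that moment (its new occupants forming an independent set, namely a subset of one old class), so properness is preserved throughout, and only the vertices of the class parked at the buffer are recolored twice.
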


\begin{lemma}\label{l1}
Let $G$ be a graph with the independence number at most $p$ for some fixed $p\geq 2$. If $G$ is $k$-colorable for some positive integer $k$, then in any $k'$-coloring of $G$ for $k'\geq\big\lfloor \frac{pk}{2}\big\rfloor+1$, there exists a color class that contain at most one vertex of $G$.
\end{lemma}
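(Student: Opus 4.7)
The plan is a short proof by contradiction that simply counts vertices two ways. Suppose the $k'$-coloring of $G$ has every color class of size at least $2$. Then, summing over the $k'$ color classes, we immediately get $|V(G)|\geq 2k'$. This will be the lower bound.

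For the matching upper bound, I would use the hypothesis that $G$ is $k$-colorable together with the independence-number bound: any $k$-coloring of $G$ partitions $V(G)$ into at most $k$ independent sets, each of size at most $p$ because the independence number of $G$ is at most $p$. Hence $|V(G)|\leq pk$. Combining the two inequalities yields $2k'\leq pk$, i.e., $k'\leq pk/2$. Since $k'$ is an integer, this forces $k'\leq \lfloor pk/2\rfloor$, which contradicts the assumption $k'\geq \lfloor pk/2\rfloor+1$. Therefore at least one color class of the given $k'$-coloring contains at most one vertex.

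There is essentially no obstacle here; the only minor point to state carefully is the passage from $k'\leq pk/2$ to $k'\leq \lfloor pk/2\rfloor$, which uses only that $k'\in\mathbb{Z}$ (handling the parity of $pk$ uniformly). The lemma's role in the paper is to supply a singleton color class that can then be used as a ``safe'' color in the subsequent renaming argument (together with Lemma~\ref{renaminglem}) inside the proof of Theorem~\ref{boundedindependence}, so no further structural information beyond the existence of such a class is needed from this lemma.
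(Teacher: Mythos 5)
Your proposal is correct and follows essentially the same double-counting argument as the paper: every class having size at least $2$ gives $|V(G)|\geq 2k'$, while $k$-colorability plus independence number at most $p$ gives $|V(G)|\leq pk$, and the two bounds clash with $k'\geq\big\lfloor \frac{pk}{2}\big\rfloor+1$. The only cosmetic difference is that the paper derives the contradiction directly from $2\big(\big\lfloor \frac{pk}{2}\big\rfloor+1\big)>pk$, whereas you pass through the equivalent step $k'\leq\big\lfloor \frac{pk}{2}\big\rfloor$ using integrality of $k'$.
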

\begin{proof}
Let $G$ be $k$-colorable. Since $G$ has independence number at most $p$, every color class in any coloring of $G$ contains at most $p$ vertices of $G$. Since $G$ is $k$-colorable, we have $|V(G)|\leq pk$. For the sake of contradiction, assume that there exists a $k'$-coloring of $G$ such that $k'\geq \big\lfloor \frac{pk}{2}\big\rfloor+1$ and every color class contains at least two vertices of $G$. Then $|V(G)|\geq 2k'\geq 2 (\big\lfloor \frac{pk}{2}\big\rfloor+1)>pk$ which is a contradiction. 
\end{proof}

\begin{lemma}\label{l2}
Let $c_1$ be a $k$-coloring of $G$ such that it has a color class with at most one vertex of $G$. Given any $k$-coloring $c$, there exists a $k$-coloring $c_2$ that shares a color class with $c$, say $C$, such that $c_2$ can be obtained from $c_1$ by recoloring each vertex of $C$ at most once.
\end{lemma}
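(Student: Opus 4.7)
The plan is to use the (almost) empty color class of $c_1$ as a target into which the vertices of a well-chosen color class of $c$ can be moved, one at a time. Let $a$ denote the color of $c_1$ with $|c_1^{-1}(a)| \le 1$. I would first fix the color class $C$ of $c$ as follows: if $c_1^{-1}(a) = \emptyset$, let $C$ be any color class of $c$; if $c_1^{-1}(a) = \{x\}$, let $C$ be the unique color class of $c$ that contains $x$. In either case $c_1^{-1}(a) \subseteq C$, and this inclusion is the feature that makes the rest of the construction go through.

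Next I would build the recoloring sequence explicitly. Order the vertices of $C \setminus c_1^{-1}(a)$ arbitrarily as $v_1, \ldots, v_m$, and starting from $c_1$ recolor each $v_i$ to color $a$, one after the other, leaving all other vertices untouched. Call the resulting coloring $c_2$. Then $c_2$ agrees with $c_1$ outside $C$ and assigns color $a$ to every vertex of $C$, so $c_2^{-1}(a) = C$, meaning $C$ is a color class shared by $c$ and $c_2$. By construction each vertex of $C$ is recolored at most once (those already in $c_1^{-1}(a)$ are not recolored at all), and no vertex outside $C$ is recolored.

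The only nontrivial point to check — and essentially the only possible obstacle — is that every intermediate assignment is a proper $k$-coloring; this is exactly what dictates the choice of $C$. Just before $v_i$ is recolored to $a$, the set of vertices currently holding color $a$ is $c_1^{-1}(a) \cup \{v_1, \ldots, v_{i-1}\}$, which lies entirely inside $C$. Since $C$ is an independent set of $G$ (being a color class of the proper coloring $c$) and $v_i \in C$, none of those vertices is adjacent to $v_i$, so the switch of $v_i$ from $c_1(v_i)$ to $a$ preserves properness. No other color class is disturbed by a single such switch, so the procedure yields a valid sequence of $k$-colorings ending in $c_2$, which has the required properties.
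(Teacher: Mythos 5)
Your proposal is correct and takes essentially the same route as the paper: choose the (near-)empty color class $a$ of $c_1$, let $C$ be the color class of $c$ containing the unique vertex of $c_1^{-1}(a)$ (or any class if that set is empty), and recolor the vertices of $C$ to $a$ one at a time. Your explicit verification that every intermediate coloring is proper — noting that the vertices currently colored $a$ always lie inside the independent set $C$ — is in fact slightly more careful than the paper's one-line justification.
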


\begin{proof}
Let $i$ be a color of $c_1$ such that $|c_1^{-1}(i)|\leq 1$. Now we choose a color $j$ of $c$ as follows. If $c_1^{-1}(i)=\{v\}$, then $j$ is the color assigned to $v$ in $c$. If $c_1^{-1}(i)=\emptyset$, then $j$ is any color of $c$ whose color class is non-empty. Let $C=c^{-1}(j)$. Recolor the vertices of $C$ by the color $i$ in the coloring $c_1$ one by one. Since the vertices of $C$ are pairwise non-adjacent, at each step a proper $k$-coloring is obtained. Let $c_2$ be the new coloring obtained from $c_1$ at the end of the process. Then it is clear that $c_2^{-1}(i)=C=c^{-1}(j)$ and there is a recoloring sequence from $c_1$ to $c_2$ that recolors every vertex of $C$ at most once. 
\end{proof}

\begin{lemma}\label{mainlemma}
Let $G$ be a graph with the independence number at most $p$ for some fixed $p\geq 2$. If $G$ is $k$-colorable for some $k\geq 2$, then diam$(\mathcal{G}(G,k'))\leq 4|V(G)|$ for every $k'\geq \big\lfloor \frac{pk}{2}\big\rfloor+1$.
\end{lemma}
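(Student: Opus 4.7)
My plan is to prove Lemma~\ref{mainlemma} by induction on $|V(G)|$; the base case $|V(G)|\le 1$ is immediate. For the inductive step, given two $k'$-colorings $\alpha$ and $\beta$ of $G$, I would match one color class of $\beta$ in $\alpha$, recurse on the complementary subgraph, and then finalize the labeling. First, I apply Lemma~\ref{l1} to obtain a color $i$ with $|\alpha^{-1}(i)|\le 1$, and then Lemma~\ref{l2} (with $c_1=\alpha$ and $c=\beta$) to reach a coloring $\alpha_1$ satisfying $\alpha_1^{-1}(i)=C$ where $C=\beta^{-1}(j)$ is a color class of $\beta$. This step recolors each vertex of $C$ at most once and leaves the other vertices unchanged. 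I would choose $C$ carefully (if necessary, by first applying the Renaming Lemma~\ref{renaminglem} to relabel $\beta$) so that $G-C$ is $(k-1)$-colorable, i.e., so that $C$ extends to a $k$-coloring of $G$.

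Next, with $G'=G-C$ satisfying $\chi(G')\le k-1$ and $\alpha(G')\le p$, I apply the induction hypothesis to $G'$ using only the $k'-1$ colors $\{1,\ldots,k'\}\setminus\{i\}$. The hypothesis $k'\ge \lfloor pk/2\rfloor+1$ implies $k'-1\ge \lfloor p(k-1)/2\rfloor+1$ for $p\ge 2$, so the recursion is valid and yields a recoloring sequence in $\mathcal{G}(G',k'-1)$ from $\alpha_1|_{G'}$ to $\beta|_{G'}$ in which each vertex is recolored at most $4$ times. Because this sub-sequence avoids color $i$, it extends directly to a valid sequence in $\mathcal{G}(G,k')$ with the vertices of $C$ held at color $i$ throughout. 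Finally, I recolor each vertex of $C$ from $i$ to $j$; this is valid because after the recursion the restriction of the current coloring to $V(G)\setminus C$ equals $\beta|_{G-C}$, so no neighbor of $C$ carries color $j$. The vertices of $C$ are thus recolored at most twice (once in the matching, once in the finalization) and the vertices of $G'$ at most four times (from the recursion), giving the bound $4|V(G)|$.

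The hard part will be establishing that a color class $C$ of $\beta$ can always be chosen so that $G-C$ is $(k-1)$-colorable, since the clean extension from $G'$ to $G$ relies critically on being able to recurse with only $k'-1$ colors. If no such $C$ exists directly among the color classes of $\beta$, the plan would need a preprocessing step—using the Renaming Lemma~\ref{renaminglem} to move $\beta$ to an equivalent coloring whose color classes align with those of some $k$-coloring of $G$—and one would have to verify that the extra recolorings introduced by this preprocessing, together with those from the matching, the recursion, and the final relabeling of $C$, still respect the bound of $4$ recolorings per vertex.
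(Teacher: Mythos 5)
Your plan founders on exactly the point you flag as ``the hard part,'' and the fix you sketch for it does not work. You need a color class $C$ of $\beta$ such that $G-C$ is $(k-1)$-colorable, so that the recursion on $G-C$ can legitimately be run with the parameter $k-1$ (this is what makes $k'-1\geq \big\lfloor \frac{p(k-1)}{2}\big\rfloor+1$ usable). But a color class of an arbitrary $k'$-coloring $\beta$ need not be extendable to a $k$-coloring of $G$, and nothing in your argument produces one that is. Your proposed rescue --- preprocessing $\beta$ with the Renaming Lemma~\ref{renaminglem} --- cannot help: that lemma applies only to two colorings that \emph{already} induce the same partition of $V(G)$ into color classes and merely permutes the color names; it never changes the partition, so it cannot ``align'' the classes of $\beta$ with those of a $k$-coloring. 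Without this step the induction is circular (you would have to recurse with the parameter $k$ rather than $k-1$, and then $k'-1\geq\big\lfloor \frac{pk}{2}\big\rfloor+1$ is not guaranteed).

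The paper's proof sidesteps this entirely by never trying to match $\alpha$ to $\beta$ directly. It fixes one optimal coloring $\gamma$ of $G$ (a $\chi(G)$-coloring) and shows (Claim~\ref{mainclaim}) that \emph{any} $k'$-coloring can be brought to a coloring inducing the same partition as $\gamma$ while recoloring every vertex at most once; the peeling argument there removes a color class of $\gamma$, which by optimality of $\gamma$ automatically drops the chromatic number (and hence the $k$-colorability parameter) by one, so the induction goes through. Applying this to both $\alpha$ and $\beta$ yields $c^*$ and $c^{**}$ inducing $\gamma$'s partition, and only \emph{then} is the Renaming Lemma invoked --- legitimately, since $c^*$ and $c^{**}$ now share a partition --- costing two recolorings per vertex, for a total of $1+2+1=4$. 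If you want to salvage your approach, the missing idea is precisely this: route both endpoint colorings through the partition of a fixed optimal coloring, rather than trying to peel off color classes of the target coloring $\beta$.
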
	
\begin{proof}
Let $G$ be $k$-colorable for some $k\geq 2$. Let $c_1$ and $c_2$ be two distinct $k'$-colorings of $G$ for $k'\geq \big\lfloor \frac{pk}{2}\big\rfloor+1$. It is sufficient to show that there exists a recoloring sequence from $c_1$ to $c_2$ that recolors every vertex of $G$ at most $4$ times. Let $\gamma$ be any $\chi(G)$-coloring of $G$. 

\begin{claim}\label{mainclaim}
 There exists a $\chi(G)$-coloring $c^*$ such that $c^*$ and $\gamma$ partition $V(G)$ into the same color classes and $c^*$ can be obtained from $c_1$ by recoloring every vertex of $G$ at most once.
\end{claim}
\begin{proof}[Proof of Claim~\ref{mainclaim}]
 We use induction on $\chi(G)$ to prove the claim. For the base case, assume that $\chi(G)=1$. Then $G$ consists of only isolated vertices and hence $c^*$ can be obtained from $c_1$ by recoloring every vertex of $G$ by a single color. So assume that $\chi(G)\geq 2$ and the induction hypothesis that the claim is true for all the graphs whose chromatic number is less than $\chi(G)$. Since $G$ is $k$-colorable and $c_1$ is a $k'$-coloring of $G$ for $k'\geq \big\lfloor \frac{p k}{2}\big\rfloor+1$, by Lemma~\ref{l1}, there exists a color class of $c_1$ that contains at most one vertex of $G$. 
 Then by Lemma~\ref{l2}, a $k'$-coloring $c_1'$ of $G$  exists such that $c_1'$ and $\gamma$ share at least one color class, say $C$, and $c_1'$ can be obtained from $c_1$ by recoloring the vertices of $C$ at most once. Let $i$ and $j$ be the colors of $c_1'$ and $\gamma$, respectively such that $c_1'^{-1}(i)=\gamma^{-1}(j)=C$. Without loss of generality, we may assume that $i=k'$ and $j=\chi(G)$. 

Let $G_1$ be the subgraph of $G$ induced by the vertex set $V(G)\setminus C$. Let $\hat{c_1}$ and $\hat{\gamma}$ be the $(k'-1)$-coloring $c_1'|_{G_1}$ and the $(\chi(G)-1)$-coloring $\gamma|_{G_1}$, respectively. Note that $\hat{\gamma}$ is a $\chi(G_1)$-coloring of $G_1$ since $\chi(G_1)=\chi(G)-1$. Let $k'_1=k'-1$ and $k_1=k-1$. Then $\hat{c_1}$ and $\hat{\gamma}$ are $k'_1$-coloring and $\chi(G_1)$-coloring of $G$, respectively. Note that $k'_1=k'-1\geq (\big\lfloor \frac{pk}{2}\big\rfloor+1)-1\geq \big\lfloor \frac{p(k-1)}{2}\big\rfloor+1=\big\lfloor \frac{p k_1}{2}\big\rfloor+1$.
Further note that $G_1$ is $k_1$-colorable since $G$ is $k$-colorable. So by induction, there exists a $k_1'$-coloring $c_1^*$ of $G_1$ such that $c_1^*$ and $\hat{\gamma}$ partition $V(G_1)$ into same sets of color classes and $c_1^*$ can be obtained from $\hat{c_1}$ by recoloring every vertex of $G_1$ at most once. Let $c^*$ be a $k'$-coloring of $G$ such that $c^*|_{G_1}=c_1^*$ and $c^*(v)=c_1'(v)$ for $v\in C$. Thus there is a recoloring sequence from $c_1'$ to $c^*$ that recolors the vertices of $G_1$ at most once and does not recolor the vertices of $C$. So there is a recoloring sequence from $c_1$ to $c^*$ that recolors every vertex of $G$ at most once and $c^*$ and $\gamma$ partition $V(G)$ into the same color classes. Note that the first $k'-\chi(G)$ color classes of $c^*$ are empty and hence $c^*$ uses $\chi(G)$ colors only.  This completes the proof of the claim.
\end{proof}

Now we return to the proof of Lemma~\ref{mainlemma}. By Claim~\ref{mainclaim}, there exist $\chi(G)$-colorings $c^*$ and $c^{**}$ such that there are recoloring sequences from $c_1$ to $c^*$ and from $c_2$ to $c^{**}$ that recolors every vertex of $G$ at most once. Recall that $c^*$ and $c^{**}$ are $\chi(G)$-colorings of $G$ such that $c^*$, $c^{**}$, and $\gamma$ induce the same partition of vertices into color classes. Since $k'>\chi(G)$, by Lemma~\ref{renaminglem}, there is a recoloring sequence from $c^*$ to $c^{**}$ in $\mathcal{G}(G,k')$ that recolors every vertex of $G$ at most twice. Hence there is a recoloring sequence from $c_1$ to $c_2$ that recolors every vertex of $G$ at most $4$ times.
\end{proof}	


\begin{thebibliography}{10}
	
	
		\bibitem{bartierarxiv} V. Bartier, N. Bousquet, C. Feghali, M. Heinrich, B. Moore, T. Pierron, 
	\newblock Recolouring planar graphs of girth at least five,
	\newblock 2021, arXiv:2112.00631.
	
	
	
	
	
	\bibitem{bonamychordal} M. Bonamy, M. Johnson, I. Lignos, V. Patel, D. Paulusma, 
	\newblock Reconfiguration graphs for vertex
colourings of chordal and chordal bipartite graphs, 
	\newblock J. Combin. Optim. 27 (2014) 132–143.
	
	

	\bibitem{bonamytree} M. Bonamy, N. Bousquet,
	\newblock Recoloring graphs via tree decompositions,
	\newblock European J. Combin. 69 (2018) 200–213.
	
	
	
	\bibitem{bonsma} P. Bonsma, L. Cereceda, 
	\newblock Finding paths between graph colourings: PSPACE-completeness and superpolynomial distances, 
	\newblock Theor. Comput. Sci. 410 (50) (2009) 5215–5226.
	
	
	\bibitem{bousquet2d+2} N. Bousquet, G. Perarnau, 
	\newblock Fast recoloring of sparse graphs, 
	\newblock European J. Combin. 52 (2016) 1–11.

	
	
	\bibitem{bousquetpol} N. Bousquet, M. Heinrich, 
	\newblock A polynomial version of Cereceda’s conjecture, 
	\newblock J. Combin. Theory Ser. B 155 (2022) 1-16.
	
	
	
	
	\bibitem{cerecedaconjec} L. Cereceda, 
	\newblock Mixing Graph Colourings 
	\newblock (Ph.D. thesis), London School of Economics and Political Science, 2007.
	
	
\bibitem{cerecedadisc} L. Cereceda, J. van den Heuvel, M. Johnson, 
	\newblock Connectedness of the graph of vertex-colourings,
	\newblock Discrete Math. 308 (2008) 913-919.	
	
%
	
	
	\bibitem{chen} S. Chen, M. Delcourt, A. Moitra, G. Perarnau, L. Postle, 
	\newblock Improved bounds for randomly sampling
colorings via linear programming,
	\newblock in: Proceedings of the Thirtieth Annual ACM-SIAM Symposium on Discrete Algorithms, SIAM, Philadelphia, PA, 2019, 2216--2234.
	
	
	
	\bibitem{cranstonEJC} D. W. Cranston,
	\newblock List-recoloring of sparse graphs,
	\newblock European J. Combin. 105 (2022) 103562.
	
	
	\bibitem{cranston22} D. W. Cranston, R. Mahmoud, 
	\newblock 5-Coloring Reconfiguration of Planar Graphs with No Short Odd Cycles,
	\newblock 2022, arXiv:2208.02228.
	
	
	\bibitem{diestel} R. Diestel, 
	\newblock Graph Theory, second ed., 
	\newblock Graduate Texts in Mathematics, vol. 173, Springer, Berlin, 2000.
	
	
	
	\bibitem{dvorak} Z. Dvořák, C. Feghali, 
	\newblock A Thomassen-type method for planar graph recoloring, 
	\newblock European J. Combin. 95 (2021) 103319.
	

\bibitem{dyer} M. Dyer, A.D. Flaxman, A.M. Frieze, E. Vigoda, 
\newblock Randomly coloring sparse random graphs with
fewer colors than the maximum degree, 
\newblock Random Struct. Algorithms 29 (4) (2006) 450–465.

\bibitem{mad} C. Feghali,
\newblock Reconfiguring colorings of graphs with bounded maximum average degree,
\newblock J. Combin. Theory Ser. B 147 (2021) 133-138.
	
	
	\bibitem{fijavz} G. Fijavž, M. Juvan, B. Mohar, R. Škrekovski, 
	\newblock Planar graphs without cycles of specific lengths, 
	\newblock European J. Combin. 23 (2002) 377–388. 
	
	
	\bibitem{frieze} A. Frieze, E. Vigoda, 
	\newblock A survey on the use of Markov chains to randomly sample colourings, 
	\newblock Oxf. Lect. Ser. Math. Appl. 34 (2007) 53.
	
	
	
	\bibitem{merkel} O. Merkel, 
	\newblock Recolouring weakly chordal graphs and the complement of triangle-free graphs,
	\newblock Discrete Math. 345 (2022) 112708.
	
	
	\bibitem{mohar} B. Mohar, J. Salas, 
	\newblock On the non-ergodicity of the Swendsen–Wang–Kotecký algorithm on the Kagomé lattice, 
	\newblock J. Stat. Mech. Theory Exp. 2010 (05) (2010) P05016.
	
	
	\bibitem{nishimura} N. Nishimura, 
	\newblock Introduction to reconfiguration, 
	\newblock Algorithms 11 (2018) 52.
	
	
	\bibitem{shen} Y. Shen, G. Zheng, W. He, Y. Zhao,
	\newblock Structural properties and edge choosability of planar graphs without $4$-cycles,
	\newblock Discrete Math. 308 (2008) 5789–5794.
	
	
	\bibitem{sittitrai} P. Sittitrai, K. Nakprasit, 
	\newblock Planar graphs without mutually adjacent $3$-, $5$-, and $6$-cycles are $3$-degenerate,
	\newblock Discrete Math. 345 (2022) 112942.
	
	
	
	\bibitem{heuvel} J. van den Heuvel, 
	\newblock The complexity of change, 
	\newblock p. 409, in: S.R. Blackburn, S. Gerke, M. Wildon (Eds.), Part of London Mathematical Society Lecture Note Series, 2013.
	
	\bibitem{wang}  W. Wang, K.W. Lih, 
	\newblock Choosability and edge choosability of planar graphs without five cycles, 
	\newblock Appl. Math. Lett. 15 (2002) 561–565.
	
	\end{thebibliography}
\end{document}